\newtheorem{theorem}{Theorem}[section]
\newtheorem{proposition}[theorem]{Proposition}
\newtheorem{lemma}[theorem]{Lemma}
\newtheorem{corollary}[theorem]{Corollary}
\theoremstyle{definition}
\newtheorem*{remark}{Remark}
\newcommand{\Disk}{\mathbb{D}}
\newcommand{\DD}{\mathbb{D}}
\newcommand{\HH}{\mathbb{H}}
\newcommand{\Arg}{\operatorname{Arg}}
\newcommand{\Z}{\mathbb{Z}}
\newcommand{\N}{\mathbb{N}}
\newcommand{\R}{\mathbb{R}}
\newcommand{\C}{\mathbb{C}}
\newcommand{\ZZ}{\mathbb{Z}}
\newcommand{\hm}{\operatorname{hm}}
\newcommand{\mubub}{\mu^{\textrm{bub}}}
\newcommand{\E}{\operatorname{{\bf E}}}
\newcommand{\bd}{\partial}
\newcommand{\rwpath}[3]{#1: #2 {\rightarrow} #3 }
\newcommand{\dist}{\operatorname{dist}}
\newcommand{\prob}[1]{\mathrm{\textbf{P}} \left ( #1 \right)}
\newcommand{\diam}{\operatorname{diam}}
\newcommand{\J}{\mathcal{J}}
\newcommand{\ball}{\mathcal{B}}
\newcommand{\tJ}{\tilde{\mathcal{J}}}
\newcommand{\TT}{\mathbb{T}}
\let \le \leqslant
\let \leq \leqslant
\let \ge \geqslant
\let \geq \geqslant
\newcommand{\ee}{\epsilon}
\renewcommand{\Re}{\operatorname{Re}}
\renewcommand{\Im}{\operatorname{Im}}
\newcommand{\looper}{\mathcal{J}}
\newcommand{\Prob} {{\bf P}}
\newcommand{\p}{\partial}
\newcommand{{\pe}}  {\partial_e}
\newcommand {{\lodd}} {{\mathcal J}}
\newcommand{{\inrad}} {{\rm inrad}}
\newcommand {{\cent}} {{w_0}}
\newcommand {{\eb}}  {{\bf e}}
\newcommand {{\dsets}} {{\mathcal A}}
\newcommand {{\dsquare} }  {{\mathcal S}}
\newcommand {{\zipper}}  {{\gamma}}
\def \paths {{\mathcal{W}}}
\def \saws {{\mathcal{W}_{\text{SAW}}}}
\newcommand{\squ}{{U}}
\newcommand{\slit}{{U}^-}
\newcommand{\bigo}[1]{O\left(#1\right)}
\newcommand{\eps}{\epsilon}
\newcommand{\ze}{a}
\newcommand{\Half}{\HH}
\newcommand{\pp}{\mathbf{P}}
\begin{document}

\title{Scaling limit of the loop-erased random walk Green's function}
%\author{Christian Bene\v{s}, Fredrik Johansson Viklund, Gregory F. Lawler}
\author{Christian Bene\v{s}
\thanks{cbenes@brooklyn.cuny.edu}}
\affil{Brooklyn College of the City University of New York}%, Brooklyn, NY 11210-2889  USA}
\author{Gregory F. Lawler%
\thanks{lawler@math.chicago.edu}}
\affil{University of Chicago}
\author{Fredrik Viklund%
\thanks{fredrik.viklund@math.kth.se}}
\affil{KTH Royal Institute of Technology and Uppsala University}
%, New York, NY 10027-6940 USA}

%5734 S. University Avenue,
%Chicago, IL 60637 USA} 
\maketitle

\begin{abstract}
We consider loop-erased random walk (LERW) running between two boundary points of a square grid approximation of a planar simply connected domain. The LERW Green's function is the probability that the LERW passes through a given edge in the domain. We prove that this probability, multiplied by the inverse mesh size to the power $3/4$, converges in the lattice size scaling limit to (a constant times) an explicit conformally covariant quantity which coincides with the SLE$_2$ Green's function.

The proof does not use SLE techniques and is based on a combinatorial identity which reduces the problem to obtaining sharp asymptotics for two quantities: the loop measure of random walk loops of odd winding number about a branch point near the marked edge and a ``spinor'' observable for random walk started from one of the vertices of the marked edge.  

\end{abstract}

\tableofcontents

\section{Introduction and outline of proof}
\label{intro}
\subsection{Introduction}
In this paper we consider loop-erased random walk, LERW, on a square grid. This measure on
self-avoiding paths is obtained by running a simple random walk and successively erasing loops as they form. We  work with a chordal version in a small mesh lattice approximation of a simply connected domain: given two boundary vertices, we chronologically erase the loops of a random walk started at one of the vertices conditioned to take its first step into the domain (along a prescribed edge) and then exit at the other vertex (along a prescribed edge). By linear interpolation this gives a random continuous curve -- the LERW path. It is known that the LERW path has a conformally invariant scaling limit in the sense that it converges in law as a curve \emph{up to reparameterization} to the chordal SLE$_2$ path as the mesh size goes to zero. For details, see \cite{LSW}. We will not use any results about SLE in this paper.
%, Z, JV1

The main theorem of this paper is a different conformal invariance result which does not follow from the convergence of LERW to SLE. We are interested in the probability that the LERW passes through a given edge of a grid approximation of a simply connected domain $D$ and we call this probability the LERW (edge) \emph{Green's function} in $D$. We show that  for edges away from the boundary, this probability, when normalized by the inverse mesh size to the power $3/4$, converges as the mesh size gets smaller to an explicit (up to an unknown lattice-dependent constant) conformally covariant function which coincides with the SLE$_2$ Green's function, $G_D(z; a,b)$. This function is defined as the limit as $\eps \to 0$ of $\ee^{-3/4}$ times the probability that the chordal SLE$_2$ path in $D$ between $a\in\p D$ and $b\in\p D$ visits the ball of radius $\eps$ around $z$. As is shown in \cite{lawler_werness}, a formula for $G_D$ can be written using a covariance rule and the fact that $G_{\DD}(0, e^{2i \theta_a}, e^{2i\theta_b})$ equals $|\sin^3( \theta_a - \theta_b)|$ up to a constant. Several related results have been obtained previously, see below for further discussion.

Let us be more precise. Let $D$ be a simply connected bounded Jordan domain containing $0$.   Write $r_D$ for the conformal radius of $D$ seen from $0$. Let $D_n \subset D$ be an approximating simply connected domain obtained by taking a largest union of squares of side-length $1/n$ centered at vertices of $n^{-1} \ZZ^2$ (see Section~\ref{sect:notation} for details.) Given suitable boundary points $a_n, b_n \in \p D_n$ tending to $a,b$ as $n$ tends to $\infty$, we let $\eta_n$ be a LERW in $D_n$ from $a_n$ to $b_n$ (these points are chosen so that there is a unique edge of $n^{-1}\ZZ^2$ which contains them) and write $e=e_n$ for the edge $[0,1/n]$. Our main result may then be stated as follows:
\begin{theorem}\label{thm:main-intro}
There exists  $0< c_0< \infty$  such
that for all  $D,a,b,$ as above there exists a sequence of approximating
domains $D_n \uparrow D$ with boundary points $a_n \rightarrow a$,
$b_n \rightarrow b$ such that
\[  \lim_{n \rightarrow \infty }
 c_0\,  n^{3/4}\,  \prob{e \subset \eta_n} = r_D^{-3/4}\,\sin^3\left( \pi \hm_D\left(0, (ab) \right) \right),\]
where $r_D$ is the conformal radius of $D$ from $0$, $\hm$ denotes harmonic measure, and $(ab) \subset \p D$ is either
of the subarcs from $a$ to $b$.
\end{theorem}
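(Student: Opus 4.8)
The plan is to establish, for each fixed $n$, an exact combinatorial identity expressing $\prob{e \subset \eta_n}$ in terms of two random walk quantities attached to a branch point $\zeta$ placed at (or immediately next to) the midpoint of the edge $e$, and then to pin down the precise $n\to\infty$ asymptotics of each quantity. Schematically, the identity should express $\prob{e \subset \eta_n}$ as a product of an elementary normalization, an exponential of the odd-winding loop measure, and the modulus of a spinor observable:
\[
 \prob{e \subset \eta_n} \;=\; \kappa_n\, e^{-\lambda\, m_n}\, \bigl|S_n\bigr|,
\]
where $m_n$ is the random walk loop measure of loops in $D_n$ of \emph{odd} winding number about $\zeta$, where $S_n$ is a ``spinor'' observable --- a signed random walk generating function started from a vertex of $e$, with each path weighted by $(-1)$ to its winding number about $\zeta$ and with boundary data on $\p D_n$ encoding the marked points $a_n,b_n$ --- and where $\kappa_n$ collects elementary per-step factors and the normalization for the $a_n\to b_n$ conditioning. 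The content of this step is algebraic: passing to the double cover of $D_n\setminus\{\zeta\}$, the event that $\eta_n$ traverses $e$ becomes the event that the loop-erased walk changes sheets, the loop-erasure combinatorics (through Wilson's algorithm and reversibility of $\mathrm{LE}$) factor through the random walk Green's function on the double cover, and the ratio of the double-cover loop-soup weight to the base one is exactly $e^{-\lambda m_n}$. The function $S_n$ is a discrete holomorphic (equivalently, discrete harmonic) spinor: away from $\zeta$ and the prescribed boundary data it satisfies a discrete Cauchy--Riemann/Laplace relation, it changes sign under monodromy around $\zeta$, and near $\zeta$ it behaves like a lattice analogue of $(z-\zeta)^{\pm1/2}$. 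This square-root branching is precisely the feature that will generate the fractional power $n^{3/4}$.

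Next I would determine the asymptotics of the spinor observable. First solve the continuum problem: the rescaled $S_n$ should converge to an explicit conformally covariant holomorphic spinor $S_D$ on $D\setminus\{0\}$ having a square-root singularity at $0$ and the prescribed behavior at $a$ and $b$. Transporting by the uniformizing map $\phi:D\to\DD$ with $\phi(0)=0$, one reads off that $S_D$ carries a factor $r_D^{-1/2}$ (from $|\phi'(0)|=r_D^{-1}$) and a factor $\sin^{3/2}\!\bigl(\pi\,\hm_D(0,(ab))\bigr)$ (from the boundary data at $a,b$, using $\hm_D(0,(ab)) = |\theta_a-\theta_b|/\pi$ in disk coordinates). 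Proving $S_n\to S_D$ with a quantitative error is the principal analytic task: it calls for a priori regularity of the walk on the double cover (Beurling-type and Harnack estimates near $\zeta$), convergence of discrete harmonic measure and of the discrete Poisson kernel to their continuous counterparts under the Carath\'eodory-type convergence $D_n\uparrow D$, and a sharp description of the singular part of the double-cover Green's function near $\zeta$. I expect this to be the main obstacle, since what is needed is a statement sharp at the level of constants --- not merely convergence of the leading term --- and uniform down to the lattice scale around the branch point.

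Finally, the loop term and the assembly. The loop measure $m_n$ diverges logarithmically, and I would invoke --- sharpening where necessary --- the known fine estimates for the random walk loop measure to obtain an expansion $m_n = \alpha\log n + \beta_D + c_\zeta + o(1)$, in which $\alpha$ and $c_\zeta$ are universal (the latter coming from loops at scales comparable to the mesh) while $\beta_D$ is the corresponding Brownian loop-measure quantity, which is conformally natural and combines with $r_D$. Exponentiating supplies the remaining power of $n$ together with more $r_D$- and $\hm_D$-dependent factors. Substituting the two expansions into the combinatorial identity, I would check that all the powers of $n$ combine to $-3/4$, collect the domain-dependent factors, and use conformal covariance together with the normalization $G_{\DD}(0,e^{2i\theta_a},e^{2i\theta_b}) = c\,|\sin^3(\theta_a-\theta_b)|$ to recognize the product as a constant multiple of $r_D^{-3/4}\sin^3\!\bigl(\pi\,\hm_D(0,(ab))\bigr)$, absorbing all universal lattice constants into a single $c_0$. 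Throughout, the specific choices of the approximating domains $D_n$ (maximal unions of mesh squares) and of $a_n,b_n$ (each sitting on one edge) are used to secure the boundary regularity behind the discrete-to-continuous convergence of harmonic measure and Poisson kernels; the fact that only a particular sequence $D_n$ is claimed, and that $c_0$ is universal but left unevaluated, both reflect that the short-range lattice contributions --- to $m_n$ and to the local behavior of $S_n$ near $\zeta$ --- are universal but expressible only through lattice sums with no closed form.
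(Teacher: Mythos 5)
Your plan is, in its architecture, the same as the paper's: an exact factorization of $\prob{e\subset\eta_n}$ into a lattice constant, the exponential of the odd-winding random-walk loop measure about a dual vertex adjacent to $e$, and a signed (``spinor'') random-walk observable; then sharp asymptotics for the loop term by comparison with the Brownian loop measure and for the spinor by discrete-to-continuum convergence; then assembly and reduction of the continuum statement to the discrete one by choosing $a_n,b_n$ with $\theta_{a_n}-\theta_{b_n}\to\pi\,\hm_D(0,(ab))$. Two points where you diverge or are imprecise. First, the paper proves the factorization via Fomin's identity rather than Wilson's algorithm, and the resulting spinor factor is not a single observable from one vertex of $e$: it is the $2\times 2$ determinant $|R_A(0,a)R_A(1,b)-R_A(0,b)R_A(1,a)|$ built from \emph{both} endpoints of $e$ and \emph{both} marked boundary points, divided by $H_{\p A}(a,b)$. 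Your claimed boundary asymptotics $S_D\propto r_D^{-1/2}\sin^{3/2}(\pi\,\hm)$ cannot come from a single spinor (a single $R_A(0,a)/H_A(0,a)$ behaves like $r_A^{-1/2}\sin\theta_a$, with no dependence on $\theta_a-\theta_b$); the exponent $3$ arises as $|\sin(\theta_a-\theta_b)|$ from the determinant times $\sin^2(\theta_a-\theta_b)$ from the ratio $H_A(0,a)H_A(0,b)/H_{\p A}(a,b)$. As written, your bookkeeping would not reproduce $\sin^3$, though matching to the known SLE$_2$ Green's function at the end papers over this. Second, for the hard analytic step you propose discrete holomorphicity/Cauchy--Riemann methods; the paper deliberately avoids discrete complex analysis and instead decomposes the spinor at the boundary of a slit square around the tip, proving sharp Poisson-kernel and Green's-function comparisons there and using the KMT coupling of the $h$-processes beyond that scale. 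Either route could in principle work, but the convergence must be quantitative with a polynomial error rate uniform near the branch point, which is where the real work lies and which your sketch correctly identifies but does not supply.
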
  
The convergence of the domains $D_n \subset D$ is in the Carath\'eodory sense.
We do not determine the value of the lattice dependent constant
$c_0$.  We do give   bounds on the rate
of convergence, but it will be easier to describe them in terms of the discrete result of Theorem \ref{maintheorem}. There are two sources of error. For the discrete approximation $D_n$ there is an error in the LERW probability compared to the SLE$_2$ Green's function for $D_n$; we
give a uniform bound on this error.  There is also an error coming from the approximation
of $D$ by $D_n$; this error depends on the domain $D$. If
$\p D$ is nice, say piecewise analytic (analytic close to $a,b$), the first  error term is larger.

Several authors have studied the LERW Green's function (or ``intensity'' as it is sometimes called) and the closely related growth exponent, that is, the polynomial growth rate exponent as $n \to \infty$ of the expected number of steps of a LERW of diameter $n$. Lawler computed these exponents in dimensions $d \ge 4$ in \cite{greenbook}, where they turn out to be the same as for simple random walk with a logarithmic correction in $d=4$. Kenyon proved that the exponent equals $5/4$ in the planar case and also estimated the asymptotics of the Green's function (up to subpower corrections) for a whole-plane LERW from $0$ to $\infty$ on $\ZZ^2$, see \cite{kenyon}. We will only discuss the planar case in the rest of the paper. Masson gave a different proof of Kenyon's result using the convergence to SLE$_2$ and known results on SLE exponents \cite{LSW-exponents-2} and obtained second moment estimates in collaboration with Barlow \cite{masson}, \cite{masson-barlow}. Kenyon and Wilson computed several exact numeric values for the Green's function of the whole-plane LERW on $\ZZ^2$ in the vicinity of the starting point, see \cite{kenyon-wilson}. In \cite{lawler_lerw_prob} Lawler recently estimated up to constants the decay rate of the Green's function for a chordal LERW in a square domain and the main result of this paper is obtained by refining the arguments of that paper. 
Our use of a branch cut is based on an idea of Kenyon's \cite{kenyon}, as discussed in Section 5.7 of \cite{kenyon-wilson}. 

The present paper is, to our knowledge, the first that treats general simply connected domains and obtains asymptotics. This is critical for the principal application we have in mind, see below. Some of the quantities we consider (and the scaling limit result itself) are related to ones appearing in the analysis of the Ising model, see, e.g., the papers by Hongler and Smirnov and Chelkak and Izyurov \cite{hongler-smirnov}, \cite{CHI_spin_ising}, but we will not use discrete complex analysis techniques here. 

The LERW path is known to converge to the SLE$_2$ path when parameterized by capacity, a parameterization which is natural from the point of view of conformal geometry. An important question is whether the LERW path also converges when parameterized in the natural Euclidean sense so that, roughly speaking, it takes the same number of steps in each unit of time. The conjecture is that one has convergence in law of LERW to SLE$_2$ with a particular parameterization, the \emph{Natural Parameterization}, which can be given as a multiple of the $5/4$-dimensional Minkowski content of the SLE$_2$ curve. See \cite{lawler-rezaei} and the references therein. 
One motivation for studying the problem of the present paper is that we believe it to be a critical step in the proof of this conjecture.    
See also \cite{garban-pete-schramm} for some results for the corresponding question in the case of percolation interfaces converging to SLE$_6$.

The starting point of our proof is a combinatorial identity that factors the LERW Green's function, just as in \cite{lawler_lerw_prob}. We give here a new proof using Fomin's identity \cite{fomin-lerw} which makes more explicit the connection with determinantal formulas. We actually prove a generalization which considers a LERW path containing as a subset a prescribed self-avoiding walk (SAW) away from the boundary. (A given edge is clearly a special case of such a SAW.) From this it follows that there are two factors whose asymptotics need to be understood. The first is the squared exponential of the \emph{random walk loop measure} of loops of odd winding number about a dual vertex next to the marked edge.  We obtain asymptotics by comparing this quantity with the corresponding conformally invariant Brownian loop measure quantity which can be computed explicitly. The second factor can be written in terms of a ``signed'' random walk hitting probability or alternatively as an expectation for a random walk on a branched double cover of the domain (the branch point is the dual vertex mentioned above). 
 After some preliminary reductions the required estimates are proved using coupling techniques that include the KMT strong approximation (see \cite{kmt1}) and results from \cite{KL}, \cite{BJK}. Some of the auxiliary results in this paper may be of independent interest. For instance, we compare various discrete boundary Poisson kernels and Green's functions (near the boundary) with their continuous counterparts in slit square domains and we obtain sharp asymptotics for Beurling-type escape probabilities for random walk started near the slit.    

\subsection*{Acknowledgements} We would like to thank an anonymous referee for carefully reading the manuscript and for providing several useful suggestions. We thank Marcin Lis and Dapeng Zhan for helpful comments on a previous version of the paper.

Bene\v{s} was supported by the Brooklyn College Foundation and PSC-CUNY Award \# 67591-0045.  Lawler was supported by National Science Foundation Grant  DMS-0907143. Viklund was supported by the Simons Foundation, National Science Foundation Grant DMS-1308476, the Swedish Research Council (VR).

\subsection{Notation and set-up}\label{sect:notation}
The proof of Theorem~\ref{thm:main-intro} has three principal building blocks. Although we formulated the theorem for a fixed domain being approximated with a grid of small mesh size we prefer to work with discrete domains in $\ZZ^2$ and let the inner radius from $0$ tend to infinity. 
Let us set some notation.
\begin{itemize}\setlength{\itemsep}{5pt}
\item{
We write the planar integer
lattice $\Z^2$ as $\Z \times i \Z \subset \mathbb{C}$.  Throughout this
paper we fix
\[    w_0 = \frac 12 - \frac i 2 , \]
and note that the dual lattice to $\Z^2$ is $\Z^2 + w_0$. }

\item  A subset
of $A \subset \Z^2$ is called {\em simply connected} if both
$A$ and $\Z^2 \setminus A$ are connected subgraphs of $\Z^2$. Let $\dsets$ denote the set
of simply connected, finite subsets $A$ of
$\Z^2 $ that contain the origin. 

\item{Let  $\overrightarrow{\mathcal{E}}=\left\{[z,w]: \, z,w \in \mathcal{V} \right\}$ be the  directed edge set  of the graph $\ZZ^2 =
 \Z + i \Z$.}
 
 \item{Let  $\p_e A$   denote the {\em edge boundary} of $A$,
that is, the set of ordered pairs $[a_-,a_+]$ of
lattice points with
$a_- \in A, a_+  \in \Z^2 \setminus
A, |a_- - a_+| = 1$. We sometimes write $\p A$ for the set of such $a_+$ and $\overline{A} = A \cup \p A$.  We will use the symbol $a$ both for the point $(a_-+a_+)/2\in \p D_A $ and for the edge $[a_-,a_+]$. It will be clear from context which of the two is meant.}

\item{ 
For each $z \in \Z^2$, let $\dsquare_z$ denote
the closed square region of side length one centered at $z$,
\[                \dsquare_z =\left \{z + (x+iy) \in \C:
   0 \leq |x|,|y| \leq \frac 12 \right\} . \]
Note that the corners of $\dsquare_z$ are on
the dual lattice $\Z^2 + w_0$.}
\item{
If $A \in \dsets$, let  $D_A \subset \C$ be the 
simply connected domain
\[   D_A = {\rm int} \left[ \bigcup_{z \in A} \dsquare_z
  \right] . \]  This is a Jordan domain such that $A \subset D_A$ and $\p D_A$ is a subset of the edge set of the dual lattice $\ZZ^2 +w_0$. Note that (the midpoint of) each such dual edge determines an edge of $\p_e A$; indeed, the midpoint of the dual edge is also the midpoint of a unique edge in $\p_e A$. }

\item{

Let $f = f_A $  denote the unique conformal
  map  $ f: D_A \rightarrow \Disk$ with 
  \[  f (w_0) = 0 , \;\;\;\; f'(w_0) > 0 . \]

  }

 \item{
For $a \in \p D_A$, we define $\theta_a \in [0,\pi)$  by
\[f_A(a) = e^{i 2 \theta_a},\]
which can be defined by extension by continuity, since $D_A$ is a Jordan domain.  
  Note the factor of $2$ in the definition, which is included in order to make later formulas cleaner. }

  \item{
  Let
  \[r_A = r_A(w_0)= f '(w_0)^{-1} \] be the conformal
  radius of $D_A$ with respect to $w_0$. If $r_A(0)$
  denotes the conformal radius from $0$, then
 one can use Koebe's $1/4$ theorem and the distortion theorem (see \cite{Lawler_cip}) to verify that  $r_A(0)=r_A \; [1+O(r_A^{-1})]$.
  }

\item We write \[\omega = [\omega_0,\ldots,\omega_\tau]\]
for nearest neighbor walks in $\Z^2$ and simply call them walks or paths.  We write
$|\omega| = \tau$ for the length of the path and
$p(\omega) = 4^{-|\omega|}$ for the simple random
walk probability of $\omega$. 

\item We write $\oplus$ for concatenation of paths.  That is
to say if
$\omega^1 = [\omega^1_0,\ldots,\omega^1_k],
\omega^2=[\omega^2_0,\ldots,\omega^2_j]$, the concatenation
$\omega^1 \oplus \omega^2$ is defined if $\omega^1_k = \omega^2_0$,
in which case
\[   \omega^1 \oplus \omega^2 =   [\omega^1_0,\ldots,\omega^1_k,
\omega^2_1,\ldots,\omega^2_j].\]

\item{If $a, b$ are distinct elements of $\p_e A$, we let 
 \[   \paths = \paths(A;a, b) \]
be the set of walks 
\[  \omega = [\omega_0,\ldots,\omega_\tau] , \]
with $[ \omega_0,\omega_1] =[a_+,a_-] ,
[\omega_{\tau-1},\omega_\tau] = [b_-,b_+]$,
and 
$\omega_1,
\ldots,\omega_{\tau-1} \in A$. }
\item{We sometimes write
\[
\omega: x \to y,
\]
where $\omega$ is a walk and where $x$ and $y$ can be edges or vertices, to mean that $\omega$ is a walk starting at $x$, ending at $y$.}
\item

For $a,b\in \p_e A$, we write
\[H_{\p A}(a,b) = \sum_{\omega \in \paths 
} p(\omega)\]
for the corresponding (boundary)
Poisson kernel. If $x \in \ZZ^2 \setminus A$ and $\dist(x,A) = 1$, then we will similarly write  $H_{\p A}(x,b)=\sum_{a:a_+ =x} H_{\p A}(a,b)$.

\item If $\omega \in  \paths(A;a, b)$ with $|\omega| = \tau$, 
we will  also
write
$\omega(t), \frac 12 \leq t \leq \tau - \frac 12,$ 
for the continuous path of time duration
$\tau-1$  
that starts at $a $ and
goes to $b $ along the edges of $\omega$
at speed one. Note that $\omega(t) \in D_A$
for $\frac 12 < t < \tau - \frac 12 .$

\item Let $\saws = \saws(A;a,b)$ denote the set of walks 
$\eta \in \paths$ that are self-avoiding walks, that is, such that $\eta(s)\neq \eta(t)$ for $s<t$. Note
that a path $\omega$ is self-avoiding if and only
if $f \circ \omega$ is a simple curve.  

\item  For 
each $\omega \in \paths$ there exists a unique
$\eta = L(\omega) \in \saws $ obtained by chronological
loop-erasing. (But $L^{-1}(\eta)$ may have many elements.) See Section~\ref{sect:preliminaries}. 

\item Let $\saws^+$ (resp., $\saws^-$) denote
the set of $\eta \in \saws$ that include the ordered
edge $[0,1]$ (resp., $[1,0]$) and $\saws^* =
\saws^+ \cup \saws^-$.   Let $\paths^*$ be the set
of $\omega \in \paths$ such that $L(\omega)
\in \saws^*$.  Set
\[   H_{\p A}^*(a,b) := \sum_{\omega \in \paths^* 
} p(\omega). \]
If $e$ is the unordered edge $[0,1]$ and $\eta$ is a LERW from $a$ to $b$ in $A$, then 
we can write
\begin{equation}\label{intro-p-1}
 P(a,b;A) := \prob{e \subset \eta}= \frac{H_{\p A}^*(a,b) } {  H_{\p A} (a,b)   };
\end{equation}
this is the LERW Green's function at the edge $e$.

\end{itemize}

Our main result is a consequence of the following theorem.

\begin{theorem}  \label{maintheorem}
There exist $u > 0$ and $0 < c_0 < \infty$ such
that the following holds.   Suppose $A \in {\mathcal A}$
and 
suppose $a,b
\in \p_e A$ with $\left| \sin( \theta_{a } - \theta_{b} ) \right| \geq r_A^{-u}$.
Then, 
\[    P(a,b;A) 
=   c_0 \, r_A^{-3/4}  |\sin^3\left(\theta_{a } - \theta_{b}\right)|
    \left[ 1 + O\left( 
  r_A^{-u} \, \left| \sin\left( \theta_{a } - \theta_{b}\right)  \right|^{-1}   
 \right)\right] .   \]
\end{theorem}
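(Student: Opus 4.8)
The plan is to prove Theorem~\ref{maintheorem} by combining an exact combinatorial factorization of the LERW Green's function with separately obtained \emph{sharp} asymptotics — with an explicit constant and a power-law rate — for the two resulting factors, thereby upgrading the ``up to constants'' estimates of \cite{lawler_lerw_prob}. The first step is to establish an identity
\[ P(a,b;A) \;=\; \mathcal{L}_A\cdot\mathcal{S}_A(a,b), \]
equivalently a factorization of $H_{\p A}^*(a,b)$, in which $\mathcal{L}_A$ depends only on the random walk loop measure of loops in $A$ of \emph{odd winding number} about the dual vertex $w_0$ adjacent to the marked edge $e=[0,1]$, while $\mathcal{S}_A(a,b)$ is a normalized ``spinor'' observable: a signed sum of random walk weights over walks from a vertex of $e$ to $b$, the sign of a walk being the parity of its winding about $w_0$ (equivalently, the number of crossings of a branch cut issuing from $w_0$), normalized by $H_{\p A}(a,b)$. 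I would derive this from Fomin's identity \cite{fomin-lerw}, which expresses loop-erased walk crossing events as determinants of random walk Poisson kernels; passing to the double cover of $D_A$ branched at $w_0$ (the branch-cut device of Kenyon \cite{kenyon}, cf.\ Section~5.7 of \cite{kenyon-wilson}) turns the event $\{e\subset\eta\}$ into the spinor observable, with the erased loops producing $\mathcal{L}_A$. It is natural to prove the identity in the more general form where $e$ is replaced by a prescribed self-avoiding walk away from $\p A$.

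The second step is to show $\mathcal{L}_A = c_1\,r_A^{-3/4}\,[1+O(r_A^{-u})]$ for some lattice constant $c_1>0$. Here I would compare the discrete loop-measure quantity with its Brownian loop measure analogue in $D_A$: the Brownian loop measure of loops of odd winding number about $w_0$ is conformally invariant, so under the map $(D_A,w_0)\to(\DD,0)$ it becomes universal and can be computed explicitly by decomposing the loop measure into winding sectors. This quantity diverges logarithmically because of loops accumulating at the branch point; once the discrete cutoff at the lattice scale is accounted for, the coefficient of $\log r_A$ produces the power $r_A^{-3/4}$, while the scale-dependent part of the discrete--continuum comparison is a lattice constant absorbed into $c_0$ (the analogue of Kenyon's logarithmic correction). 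The remaining error should be power-law in $r_A$; since $\mathcal{L}_A$ does not depend on $a,b$, no factor $|\sin(\theta_a-\theta_b)|^{-1}$ enters at this stage.

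The third and main step is to prove $\mathcal{S}_A(a,b)=c_2\,|\sin^3(\theta_a-\theta_b)|\,[1+O(r_A^{-u}|\sin(\theta_a-\theta_b)|^{-1})]$, i.e.\ that the (scale-free) observable $\mathcal{S}_A$ converges to the corresponding continuum spinor. On the continuum side, in $\DD$ with branch point $0$, the boundary-to-boundary spinor from $e^{2i\theta_a}$ to $e^{2i\theta_b}$, normalized by the boundary Poisson kernel, equals a constant times $|\sin^3(\theta_a-\theta_b)|$, consistently with $G_{\DD}(0,e^{2i\theta_a},e^{2i\theta_b})\propto|\sin^3(\theta_a-\theta_b)|$ from \cite{lawler_werness}. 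To pass from discrete to continuum I would: (i) make preliminary reductions localizing the walk near $a$, near $b$, and near $w_0$, and reducing the $a,b$-dependence to near-boundary hitting quantities; (ii) compare the relevant discrete boundary Poisson kernels and near-boundary Green's functions with their continuum counterparts — near the branch cut the local picture is that of a slit square — using Beurling-type escape-probability estimates for random walk started near the slit; (iii) couple the random walk to Brownian motion via the KMT strong approximation \cite{kmt1} together with results from \cite{KL} and \cite{BJK}, so that the discrete signed observable is matched to the explicitly computable continuum spinor with power-law error. The hypothesis $|\sin(\theta_a-\theta_b)|\ge r_A^{-u}$ is precisely what makes $a$ and $b$ far enough apart on the scale of $r_A$ for these couplings to be effective and for the error to be a genuine \emph{relative} error; this is where the $|\sin(\theta_a-\theta_b)|^{-1}$ in the error arises. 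Multiplying the estimates of the second and third steps and setting $c_0=c_1c_2$ (and $u$ the smaller of the two exponents) then gives the theorem, the dominant error being the one from the spinor estimate.

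I expect the third step to be the main obstacle. The combinatorial identity and the loop-measure comparison are technical but follow fairly established lines; by contrast, obtaining asymptotics for the spinor observable near the boundary with an \emph{explicit} constant, a \emph{power-law} rate, and \emph{uniformity} as $a,b$ approach each other down to separation $r_A^{-u}$ requires making quantitative a chain of delicate estimates: the discrete-to-continuum comparison of Poisson kernels and Green's functions in slit domains, the Beurling-type estimates near the slit, and the strong-approximation coupling of the random walk to Brownian motion — each of which must be controlled with a uniform power-law error, and combined without losing the sharp constant.
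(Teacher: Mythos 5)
Your overall architecture --- Fomin's identity with a branch cut at $w_0$ to factor $P(a,b;A)$ into a loop-measure term and a normalized spinor term, a Brownian loop measure comparison for the former, and KMT coupling plus slit-square Poisson kernel and Beurling estimates for the latter --- is exactly the paper's. However, the quantitative claims you make for the two individual factors are both false, so your second and third steps as stated cannot be carried out. The random walk loop measure of odd-winding loops satisfies $m(\mathcal{J}_A)=\tfrac18\log r_A+c_0+O(r_A^{-u})$, so the loop factor $\exp\{2m(\mathcal{J}_A)\}$ equals $c_1\,r_A^{+1/4}\,[1+O(r_A^{-u})]$: it \emph{grows} with $r_A$ and cannot equal $c_1\,r_A^{-3/4}$. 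Correspondingly, the normalized spinor term is not scale-free. It must be taken in determinant form, $\Phi_A(a,b)=|R_A(0,a)R_A(1,b)-R_A(0,b)R_A(1,a)|/H_{\p A}(a,b)$ --- your description of $\mathcal{S}_A$ as a single signed sum elides the determinant structure, which is essential since the subtraction formula is what produces $\sin(\theta_a-\theta_b)$ rather than separate $\sin\theta_a$, $\sin\theta_b$ factors. Each entry $R_A(z,a)$ with $z\in\{0,1\}$ carries a factor $\asymp r_A^{-1/2}$ relative to $H_A(z,a)$ (a Beurling-type exponent $1/2$ at the tip of the branch cut, because the walk starts at lattice distance from $w_0$), and $H_A(0,a)H_A(0,b)/H_{\p A}(a,b)\asymp \sin^2(\theta_a-\theta_b)$, so $\Phi_A(a,b)\asymp r_A^{-1}|\sin^3(\theta_a-\theta_b)|$. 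The exponent $3/4$ is thus the cancellation $1-\tfrac14$ between a decaying spinor factor and a growing loop factor; it is \emph{not} the coefficient of $\log r_A$ in the odd-winding loop measure, which is $1/8$, not $-3/8$.

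Two smaller points. First, the factorization also contains a third factor $\bar q_A=\tfrac14 G^q_A(0,0)G^q_{A\setminus\{0\}}(1,1)$, which converges to a lattice constant and is harmlessly absorbed into $c_0$; your two-factor identity is therefore incomplete as an exact identity, though this does not affect asymptotics. Second, the error term $r_A^{-u}|\sin(\theta_a-\theta_b)|^{-1}$ does not arise from the coupling failing when $a$ and $b$ are close; it arises because the sharp estimates $\Lambda_A(0,a)=c_2r_A^{-1/2}[\sin\theta_a+O(r_A^{-u})]$ and $\Lambda_A(1,a)=c_2r_A^{-1/2}[\cos\theta_a+O(r_A^{-u})]$ carry \emph{additive} errors which, after the cancellation in $\sin\theta_a\cos\theta_b-\cos\theta_a\sin\theta_b=\sin(\theta_a-\theta_b)$, become relative errors of size $r_A^{-u}|\sin(\theta_a-\theta_b)|^{-1}$.
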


Let us explain how
to derive Theorem~\ref{thm:main-intro}
from Theorem \ref{maintheorem}. Along the way we will comment on convergence rate bounds. Suppose $D$ is a Jordan domain
containing the origin with $\dist(0,\p D) = 1$.  For each $n$, let
$A_n$ be the largest simply connected subset of $\Z^2$ containing the
origin such that $\overline{D_{A_n}} \subset n D$.  Let $D_n = n^{-1} D_{A_n}$,
$w_n = n^{-1} w_0 = n^{-1}(1/2- i/2)$.
Let $f_{A_n}$ be the corresponding uniformizing conformal map as above and $F_n(z) = f_{A_n}(nz)$.
Then $F_n: D_n \rightarrow \Disk$ with $F_n(w_n) = 0, F_{n}'(w_n) >0$.
Let $F:D \rightarrow \Disk$ be the conformal transformation with $F(0) = 0,
F'(0) >0$.  Since $D$ is a Jordan domain, $F$ extends to a homeomorphism,
$F: \overline D \rightarrow \overline \Disk$. Note that $D_n \subset D$ and for each $n$ we can write
\[
F_n(z) = M_{n} \circ \psi_n \circ F(z), \quad z \in \overline{D_n},
\]
where $\psi_n : F(D_n) \to \Disk$ with $\psi_n(0)=0, \psi_n'(0)>0$ and $M_n(z) = k_n(z-u_n)/(1-\overline{u_n}z)$ is the M\"{o}bius transformation of $\Disk$ taking $u_n=\psi_n \circ F(w_n) = O(1/n)$ to $0$ with $k_n\in \p \Disk$ chosen so that $[M_{n} \circ \psi_n \circ F]'(w_n) > 0$. We have $[\psi_n\circ F]'(w_n) = \psi_n'(0) F'(0)(1+O(1/n))$ and consequently $|k_n-1|=O(1/n)$. It follows that if $|z| > c$ for some constant $c$, then $M_n(z) = z(1+O(1/n))$, where the error depends on $c$.  

If $z \in \p D_n$, let $w$ be a point in $\p D$ such that $|z-w|=\dist(z,\p D)$.
Since $z$ is contained in a closed square of side length $n^{-1}$ that intersects
$\p D$, we have $|z-w| \leq \sqrt 2/n$.  By the Beurling estimate (see, e.g., \cite{Lawler_cip}), we
can see that there is a universal constant $c$ such that $|F(z) - F(w)| \leq c \, n^{-1/2}.$  In other words, 
there exists $c_1$ such that
\begin{equation}\label{annul}         F(\p D_n) \subset \{z \in \overline{\Disk}: |z| \geq 1 - c_1n^{-1/2}\}.\end{equation}
Also, by the Beurling estimate, if $e$ is an edge on the boundary of $A_n$ (and $\diam e = 1$), 
then $\diam F(n^{-1}e) = O(n^{-1/2})$.   Let $a \in \p D$ be arbitrary. We will choose a particular point $a_n \in \p D_{n}$ so that $|F(a) - F_n(a_n)|$ is small. Let $v_n \in \Disk$ be a point on $F(\p D_n)$ with  the same argument as $F(a)$ and with minimal radius. Then as we showed above $|v_n| \ge 1- c_1 n^{-1/2}$ and there exists an edge $e \subset \p D_{A_n}$ (this is an edge of the dual to $\ZZ^2$) such that $v_n \in F(n^{-1} e)$. We take $a_n \in \p D_n$ to be the midpoint of $n^{-1} e$. Note that $na_n$ then determines an element of $\p_e A_n$ by virtue of being its midpoint. We have $|F(a_n)-F(a)| = O(n^{-1/2})$. It is not hard to show that if $c_1$ is as in \eqref{annul} then for $z$ with $|z| \le 1-2 c_1 n^{-1/2}$, 
\[
|\psi_n(z) - z| \le c_2 n^{-1/2} \log n,
\] 
where $c_2$ is universal. See, e.g., Section~3.5 of \cite{Lawler_cip}. Using this and the Beurling estimate we see that $|\psi_n \circ F(a_n) - F(a)| = O(n^{-1/5})$. (We are not attempting to optimize exponents here.) Using the estimate on $M_n$ it follows that \[|F_n(a_n) - F(a)| = O(n^{-1/5}).\]

Also, $r_D = n^{-1} r_{A_n}
[1 + O(n^{-1/2})]$.  Hence, given $a,b \in \p D$, we would like to choose $a_n, b_n \in \p D_n$ to approximate $a,b$, and then apply Theorem~\ref{maintheorem} to $A_n$ with boundary edges determined by $n a_n, n b_n$, to get a uniform error term in Theorem~\ref{thm:main-intro}.
 
Unfortunately, although there is a uniform bound on $|F(a) - F(a_n)|$, there
is no uniform bound on $|a - a_n|$ without additional assumptions on the regularity of $\p D$.  However, since
$|F(a) - F(a_n)| \leq c_2 \, n^{-1/2}$, we certainly have
\[  |a - a_n|  \leq  \delta_n:= \sup\left\{|F^{-1}(z) - F^{-1}(w)|
:  |z-w| \leq c_2 \, n^{-1/2} \right\}.\] 
Since $D$ is a Jordan domain $F^{-1}$ is uniformly continuous
and hence $\delta_n \rightarrow 0$ as $n \rightarrow \infty$ and so $a_n \to a$ and $b_n \to b$ and this is all we need for Theorem~\ref{thm:main-intro} without a convergence rate estimate.

If $\p D$ is, e.g., locally analytic at $a$ and $b$, or more generally, if
the map $F$ is bi-Lipschitz in neighborhoods of $a$ and $b$, then
one can improve these estimates giving $|a-a_n| = O(n^{-1}),
|F(a) - F(a_n)| = O(n^{-1})$. Analogous estimates under weaker conditions on $\p D$ can also be given. The conclusion is that for sufficiently ``nice'' domains the
biggest error term in our result comes from the discrete result,
Theorem \ref{maintheorem}.

\subsection{Outline of proof and an important idea}\label{sect:key}
The first step of the proof of Theorem~\ref{maintheorem} rewrites \eqref{intro-p-1} as a product of three factors which will then be estimated in the remainder of the paper. Before stating the main estimates we will introduce an idea which is further discussed in Section~\ref{sect:candidate}.

Suppose $\omega(t), t \in [0,T],$ is a curve   in $\overline{D_A}$ that avoids
$w_0$. Let $t \mapsto \Theta_t = \arg[f(\omega(t))]$ be a continuous version of the argument. Define
\begin{align*}  J_t&  = \left\lfloor \frac{\Theta_t}{2 \pi} \right\rfloor -
     \left\lfloor \frac{\Theta_0}{2 \pi}\right \rfloor ;\\
       Q_t & = Q(\omega[0,t])   = (-1)^{J_t}. \end{align*}
       Although the argument is only defined up to an integer
multiple of $2 \pi$, 
the value of $J_t$, and hence the value of $Q_t$
are independent of the choice of $\Theta_0$. 
If $\omega$ has time duration $\tau$, we write
$    Q(\omega) = Q_{\tau}$.   Note that if $\omega = \omega_1
 \oplus \omega_2$, then
 \begin{equation}\label{5-31}
      J(\omega) = J(\omega_1)  + J(\omega_2), \;\;\;\;
     Q(\omega) = Q(\omega_1) \, Q(\omega_2) . 
 \end{equation}
   In particular, if $\omega = [\omega_0,\ldots,\omega_\tau]$ is
   a path lying in $A$, then
   \[   Q(\omega) = \prod_{j=1}^\tau Q(e_j) , \;\;\;
     e_j = [\omega_{j-1}, \omega_j] . \]
     Roughly speaking $Q(e) = -1 $ if and only if
   the edge $e$ crosses the branch cut $\beta :=f_A^{-1}([0,1])$.
  We
  note the following:
  \begin{itemize}\setlength{\itemsep}{5pt}
  \item  $Q(\overrightarrow e)$ is a function of the undirected edge $e$.
  \item  If $e = [0,1]$,
  then $Q (e ) = 1$ assuming $r_A$ is sufficiently large. We will assume this throughout the paper.  
  \item 
  if $\ell $  is a loop in $A$, then 
   $Q(\ell) = -1$
if and only if the winding number of $\ell$ about
$w_0$ is odd.
\end{itemize}

We define  \emph{(signed) weights}   by
\[
q(e)  =  p(e) \, Q(e) = \frac{1}{4}Q(e), \quad (e \text{ edge}),
\]  and if $\omega$ is a walk as above,
\[   q(\omega) = p(\omega) \, Q(\omega) =
   \prod_{j=1}^\tau q(e_j) . \]

Let $S_j$ be simple random walk
starting in $A$, and let $\tau = \tau_A=\inf\{k\geq 0: S(k)\not\in A\}$
be the first time that the walk leaves $A$. As a slight abuse of notation, we write $S_{\tau}=
a$ to mean that the walk exits $A$ through
the ordered edge $[a_-,a_+]$. 
If $S_{\tau} = a$, we associate
to the random walk path the continuous path in $\overline{D_A}
$ of time
duration $\tau - \frac 12$ ending at $a \in \p D_A$.

Let
 
\begin{equation}\label{Ia}
I_a=1\{S[1,\tau] \cap \{0,1\} = \emptyset; S_{\tau}=a \}
\end{equation}
be the indicator of the event that $S$ leaves $A$ at the boundary edge $a$ and never visits the points $0,1$ before leaving $A$.  Let
\[
R_A(z,a)=\E^z\left[(-1)^{J(S[0,\tau-\frac 12])} I_a \right]
 = \E^z\left[Q(S[0,\tau-\frac 12]) \,I_a \right],
\] 
\[
\Phi_A(a,b) = \frac{|R_A(0,a) R_A(1,b) - R_A(0,b) R_A(1,a)|}{H_{\p A}(a,b)}.
\]

Let $G^q_A(z,w)$ denote the random walk Green's function in $A$ using the
signed weight $q$, 
\[       G^q_A(z,w)  = \sum_{\stackrel{\omega: z \rightarrow w}{\omega\subset A}}
                  q(\omega) . \]
     Here the sum is over all walks in $A$ from $z$ to $w$. 
 From the definition, we can write 
\[     G^q_A(0,0) = \sum_{j=0}^\infty
        \E^0\left[Q\left(S[0,j] \right) ;S_j = 0 ; j < \tau_A\right], \] 
\[     G^q_{A \setminus \{0\}}(1,1) = \sum_{j=0}^\infty
        \E^1\left[Q\left(S[0,j] \right) ;S_j = 1 ; j < \tau_{A \setminus \{0\}}  
         \right], \]
   We define 
  \[     \bar{q}_A = \frac 14 \, G^q_A(0,0) \,
       G^q_{A \setminus \{0\}}(1,1).\]
   We can also interpret $4 \, \bar q_A$ as the random walk loop
   measure using the weight $q$
    of loops in $A$ that intersect $\{0,1\}$.

We write $\looper_A$ for the set of unrooted random walk loops  $\ell \subset A$ with $Q(\ell)=-1$. (See 
Section~\ref{sect:preliminaries} for precise definitions.) The following is the combinatorial identity central to our proof:
\begin{theorem} \label{thm:det-formula}  Let $A \in 
\dsets$ 
and $a,b \in \p_e A$.
 Then,
\begin{equation}\label{de} P(a,b; A) =  \bar{q}_A \, \exp \left\{  2 m(
\looper_A)\right\} \, \Phi_A(a,b), \end{equation}
where $m$ is the random walk loop measure and  $\looper_A$ is the set of unrooted random walk loops  $\ell \subset A$ with $Q(\ell)=-1$. 
\end{theorem}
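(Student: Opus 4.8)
\emph{Strategy.} I would derive \eqref{de} from a decomposition of the walks counted by $H^*_{\p A}(a,b)$ at the copy of the edge $[0,1]$ that survives chronological loop-erasure, followed by an application of Fomin's identity \cite{fomin-lerw} on the branched double cover of $D_A$ ramified at $w_0$. The three factors on the right of \eqref{de} should emerge respectively from: the loops the underlying random walk makes at the endpoints $0$ and $1$ (giving $\bar q_A$); the discrepancy between ``the full-walk loop-erasure uses the edge'' and ``a disjoint pair of loop-erased arcs emanating from $0$ and $1$'', which is governed by random walk loops of odd winding number about $w_0$ (giving $\exp\{2m(\looper_A)\}$); and the Fomin determinant of the signed hitting quantities $R_A(0,\cdot)$ and $R_A(1,\cdot)$ (giving $\Phi_A(a,b)$ once one divides by $H_{\p A}(a,b)$, as in \eqref{intro-p-1}).

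\emph{Decomposition.} Split $\paths^*$ according to whether $L(\omega)\in\saws^+$ or $L(\omega)\in\saws^-$, i.e.\ whether the edge is used as $0\to1$ or $1\to0$; I fix the first orientation, the second following by the symmetry $a\leftrightarrow b$. If $L(\omega)\in\saws^+$, then by the definition of chronological loop-erasure the surviving step from $0$ to $1$ is the step of $\omega$ immediately after its \emph{last} visit to $0$. This yields $\omega=\omega^{(1)}\oplus[0,1]\oplus\omega^{(2)}$ with $\omega^{(1)}\colon a\to 0$ and $\omega^{(2)}\colon 1\to b$, where $\omega^{(2)}$ avoids $0$ and $\omega^{(1)}$ is subject to the constraints that force $0$ and $1$ to actually lie on $L(\omega)$ (this is where care is needed, since $\omega$ visiting $0$ does not by itself imply $0\in L(\omega)$; the admissible $\omega^{(1)}$, given $\omega^{(2)}$, must be described via the standard last-exit representation of loop-erasure). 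Using $p(\omega)=\tfrac14\,p(\omega^{(1)})\,p(\omega^{(2)})$ and, by \eqref{5-31} together with $Q([0,1])=1$, the multiplicativity of the sign along this decomposition, summing over all such decompositions writes the one-orientation part of $H^*_{\p A}(a,b)$ as a convolution of a sum over $\omega^{(1)}$, the factor $\tfrac14$, and a sum over $\omega^{(2)}$, subject to a mutual-avoidance constraint relating $L(\omega^{(1)})$ and $\omega^{(2)}$.

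\emph{Fomin's identity and conclusion.} This convolution is exactly of the form to which Fomin's identity \cite{fomin-lerw} applies: a pair of walks issued from $0$ and $1$ whose loop-erasures are disjoint and terminate at $a$ and $b$. Carried out on the double cover $\widehat{D_A}$ branched at $w_0$ --- equivalently, weighting by $q$, which records the sheet --- Fomin's identity evaluates the total mass of such configurations as the $2\times2$ determinant $R_A(0,a)R_A(1,b)-R_A(0,b)R_A(1,a)$, times the diagonal contributions $G^q_A(0,0)$ and $G^q_{A\setminus\{0\}}(1,1)$ coming from the loops the walks make at their starting points (the deletion of $0$ in the second reflecting that $\omega^{(2)}$ avoids $0$, so that combined with the $\tfrac14$ one gets precisely $\bar q_A$), times a factor $\exp\{2m(\looper_A)\}$. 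The last factor records that the full-walk loop-erasure using the edge and a disjoint pair of loop-erased arcs from the endpoints differ exactly by loops encircling $w_0$, and it also absorbs the passage from the signed weight $q$ back to $p$, via the relation $m^q=m-2\,m(\,\cdot\cap\looper_A\,)$ together with the identity expressing $\log G^q_A(x,x)$ as the signed loop measure of loops through $x$. Finally, the two admissible edge orientations contribute the two terms of the determinant, and since $w_0$ is incident to $[0,1]$ these carry opposite overall signs, so their magnitudes add and one obtains $|R_A(0,a)R_A(1,b)-R_A(0,b)R_A(1,a)|$. Dividing by $H_{\p A}(a,b)$ yields \eqref{de}.

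\emph{Main obstacle.} The crux is the decomposition step: correctly translating the event $\{L(\omega)\text{ uses }[0,1]\text{ oriented }0\to1\}$ into conditions on the pair $(\omega^{(1)},\omega^{(2)})$ that match the hypotheses of Fomin's identity --- in particular handling that $\omega$ visiting $0$ does not force $0\in L(\omega)$ and that $\omega^{(1)}$ may itself visit $1$ --- while simultaneously tracking the winding sign $Q$ through the decomposition so that the two edge orientations assemble into a genuine determinant rather than an unrelated sum of products, and identifying the failure of the naive bijection precisely with the odd-winding loop measure, which is what produces the clean factor $\exp\{2m(\looper_A)\}$. The remaining ingredients --- the loop-measure/Green's-function correspondence, the evaluation of the Fomin determinant, and the $a\leftrightarrow b$ symmetrization --- are routine once this combinatorial and sign bookkeeping is in place.
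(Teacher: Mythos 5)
You have assembled the right ingredients (a decomposition at the surviving edge, Fomin's identity, the signed weight $q$/double cover, the odd-winding loop measure), but the two steps you flag as ``where care is needed'' are precisely the content of the proof, and as sketched they do not go through. First, the paper does \emph{not} decompose the raw walk $\omega$ at the step after its last visit to $0$; that route forces you to characterize when $0\in L(\omega)$ and when the arcs recombine into a loop-erasure containing $[0,1]$, which you leave unresolved. Instead the paper writes $H^*_{\p A}(a,b)=\sum_{\eta\in\saws^*}\hat p(\eta)$ with $\hat p(\eta)=p(\eta)F(\eta;A)$, decomposes the \emph{SAW} as $\eta=(\eta^1)^R\oplus[0,1]\oplus\eta^2$, factors the loop term as $F(\eta;A)=F([0,1];A)\,F(\eta^1;K)\,F(\eta^2;K\setminus\eta^1)$ with $K=A\setminus\{0,1\}$, and then resums each arc \emph{back} into a full random walk via $\hat p(\eta^i)=\sum_{L(\omega^i)=\eta^i}p(\omega^i)$. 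This produces exactly the configuration Fomin's identity needs --- a pair $\omega^1:0\to a$, $\omega^2:1\to b$ with $\omega^2\cap \operatorname{LE}[\omega^1]=\emptyset$ --- with no analysis of the chronological loop-erasure of the original walk. The factor $F([0,1];A)=G^q_A(0,0)G^q_{A\setminus\{0\}}(1,1)\cdot e^{2m(\looper_A)}\cdot(\text{nothing else})$ is where $\bar q_A$ and part of the exponential come from, via Lemma~\ref{lem:greens-function-loop-measure}.

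Second, your sign bookkeeping is wrong: the two edge orientations do not ``contribute the two terms of the determinant.'' Each orientation contributes one non-intersection-constrained double sum; Fomin applied to the \emph{difference} over orientations yields the unsigned determinant $\Delta_K$, while the \emph{sum} over orientations --- rewritten using $p(\eta)=\pm q(\eta)$ on $\saws^\pm$ and the key Lemma~\ref{lem:F-Fq} --- becomes a signed difference of $\hat q$-sums to which the same Fomin argument applies, yielding $e^{2m(\looper_A)}F^q([0,1];A)\,|\Delta^q_K|$; the absolute value appears because the left-hand side is positive, not because ``magnitudes add.'' Only this second identity is the one that gives Theorem~\ref{thm:det-formula}. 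Relatedly, the clean factor $e^{2m(\looper_A)}$ is not a discrepancy between two loop-erasure schemes: it is the statement $F^q(\eta;A)=F(\eta;A)e^{-2m(\looper_A)}$, which holds with the \emph{same} constant for every $\eta\in\saws^*$ because every loop of odd winding number about $w_0$ separates $w_0$ from $\p A$ and hence meets every such SAW. Without these three pieces --- the SAW-level decomposition with resummation, the sum/difference Fomin argument, and Lemma~\ref{lem:F-Fq} --- the proposal is a plan rather than a proof.
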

\begin{proof}
See Section~\ref{sect:det}.  
\end{proof}

It is not hard (see       \cite[Section 2]{lawler_lerw_prob})
 to see that there exists $\bar q \in (0,\infty)$  and $u > 0$
 such that 
\begin{equation}  \label{sept20.1}
     \bar q_A = \bar q + O(r_A^{-u}) . 
     \end{equation}
To obtain Theorem \ref{maintheorem}, the remaining work is then to  estimate the other two factors on the right-hand side of \eqref{de}. In Section~\ref{sect:loops} we compare the random walk loop measure with the Brownian loop measure to prove
the following.   Our proof does not yield the value of
the lattice-dependent constant $c_1$.  

\begin{theorem}  \label{thm:main-thm-loop-measure}
There exist   $u>0$ and $0<c_1 < \infty$ such that
if $A \in \dsets$,
\[     \exp\{2m(\looper_A)\}
   = c_1 \, r_A^{1/4}  \, \left[1 + O\left(r_A^{-u}
    \right) \right] .\]
\end{theorem}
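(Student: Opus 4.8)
The plan is to compare the random walk loop measure of loops with odd winding number about $w_0$ in $A$ with the corresponding Brownian loop measure quantity, and then compute the latter explicitly using conformal invariance. Recall that $m(\looper_A)$ is the random walk loop measure of unrooted loops $\ell \subset A$ with $Q(\ell) = -1$, i.e.\ those with odd winding number about $w_0$. The Brownian loop measure is conformally invariant, so the analogous Brownian quantity $\mu^{\text{Br}}_A(\text{odd winding about } w_0)$ depends on $(D_A, w_0)$ only through the conformal image $(\Disk, 0)$, except that loops of small diameter near $w_0$ contribute a divergent amount; the standard fix is to consider the difference $m(\looper_A) - m(\looper_{A'})$ for two domains, or to regularize by removing loops of diameter below a fixed scale. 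Concretely, I would first establish an exact (or asymptotically exact) formula for the \emph{Brownian} loop measure of loops of odd winding number about a marked point that stay in a domain, as a function of the conformal radius. By conformal invariance this reduces to a computation in $\Disk$ with marked point $0$, where the loop measure of loops winding an odd number of times can be evaluated: one restricts to loops not containing $0$, uses the decomposition by winding number, and extracts the relevant piece. The dependence on $r_A$ enters through the cutoff: loops of diameter $\gtrsim$ some fixed scale see the domain essentially as $r_A \Disk$, contributing a term $\frac14 \log r_A + \text{const} + o(1)$, which after multiplication by $2$ and exponentiation yields the $c_1 r_A^{1/4}$ behavior.

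The second and more substantial step is to pass from the Brownian loop measure to the random walk loop measure with quantitative control. Here I would invoke the known comparison results between the random walk loop measure and the Brownian loop measure — these are in the literature (Lawler--Trujillo Ferreras, and refinements in \cite{lawler_lerw_prob} and related works) — which say that for a fixed domain, $m(\text{RW loops in } A) - \mu^{\text{Br}}(\text{Br loops in } D_A)$ converges, with the error controlled by the mesh. Since we are fixing the lattice and letting $r_A \to \infty$, the relevant statement is that the difference between the random walk loop measure quantity and the Brownian one, both suitably regularized by removing small loops near $w_0$, is $O(r_A^{-u})$. The small loops near $w_0$ — those of diameter $O(1)$ on the lattice scale — do not depend on $A$ at all (for $r_A$ large they see the full lattice near $w_0$), so they contribute a multiplicative constant that gets absorbed into $c_1$ and do \emph{not} need to be compared with anything continuous. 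The macroscopic loops, of diameter between a fixed constant and $O(r_A)$, are where the strong-approximation / coupling comparison between random walk and Brownian motion is applied.

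In more detail, the key steps in order: (1) split $m(\looper_A) = m(\looper_A^{\text{small}}) + m(\looper_A^{\text{large}})$ where ``small'' means diameter below a fixed threshold $\rho$; argue $m(\looper_A^{\text{small}}) = c + O(r_A^{-u})$ is a constant independent of $A$ up to the error (loops of odd winding about $w_0$ of bounded size only depend on the lattice in a bounded neighborhood of $w_0$, and the chance of such a loop exiting $A$ is exponentially small in $r_A$). (2) For the large loops, compare with the Brownian loop measure restricted to loops of diameter $\ge \rho$ with odd winding about $w_0$, staying in $D_A$; this comparison uses the coupling of random walk and Brownian motion together with regularity estimates for the loop measure (the amount of Brownian loop measure supported on loops that come within $\epsilon$ of $\p D_A$ or that have winding number changing under small perturbations is small). (3) Compute the Brownian quantity via conformal invariance: map $D_A \to \Disk$ sending $w_0 \to 0$; loops of diameter $\ge \rho$ in $D_A$ have conformal image of diameter $\gtrsim \rho / r_A$, so the Brownian loop measure of odd-winding loops in $\Disk$ avoiding a disk of radius $\sim \rho/r_A$ about $0$ equals $\frac14 \log(r_A/\rho) + \text{const} + O(\rho/r_A)$ by an explicit computation (the odd-winding loop measure about $0$ in an annulus $\{\delta < |z| < 1\}$ is $-\frac14 \log \delta + O(1)$, a standard fact about the Brownian loop measure related to the $c = -2$ central charge / the factor $\frac{1}{4} = \frac{1}{2} \cdot \frac{1}{2}$). (4) Assemble: $2 m(\looper_A) = \frac12 \log r_A + \text{const} + O(r_A^{-u})$, so $\exp\{2 m(\looper_A)\} = c_1 r_A^{1/4}(1 + O(r_A^{-u}))$.

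The main obstacle I anticipate is step (2): getting a \emph{quantitative} (power-of-$r_A$) error bound in the comparison of the random walk loop measure with the Brownian loop measure, uniformly over the family of domains $D_A$ with $r_A \to \infty$, rather than just a qualitative convergence for a fixed domain. This requires careful use of the strong approximation (KMT-type coupling, as the introduction anticipates) to couple random walk loops with Brownian loops, together with estimates showing that the ``bad'' loops — those where the coupling fails to preserve the winding parity about $w_0$, or those near $\p D_A$ — carry only $O(r_A^{-u})$ loop measure. The winding-parity sensitivity near $w_0$ is delicate: a Brownian loop passing very close to $w_0$ can change its winding number under an arbitrarily small perturbation, but the loop measure of such loops is small, and on the lattice side the discrete loops near $w_0$ are handled separately in step (1), so the comparison only needs to control loops that stay a fixed distance from $w_0$, where the winding number is robust. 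Making all of this uniform in $A$ and tracking the exponent $u$ is the technical heart of the argument.
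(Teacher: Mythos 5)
Your overall strategy --- compare $m(\looper_A)$ with the Brownian loop measure of odd-winding loops via a coupling, compute the Brownian quantity by conformal invariance, and absorb the microscopic loops into the constant --- is the strategy of the paper (Theorem \ref{thm:odd-winding-number}, Lemmas \ref{mudiff} and \ref{lemma:loop-coupling}). But there is a genuine gap at exactly the point the paper flags as the main difficulty: macroscopic loops that pass close to $w_0$. You assert that ``the loop measure of such loops is small'' and that the comparison therefore only needs to handle loops staying a fixed distance from $w_0$. That is false at the required quantitative level: the loop measure (random walk or Brownian) of loops of diameter comparable to $r_A$ that come within distance $s$ of $w_0$ is only logarithmically small in $r_A$ --- a rooted-loop computation as in \eqref{jan9.3}, without the extra excursion to radius $r^{1+u}$, gives roughly $\sum_{1\le |z|\le s}|z|^{-2}\log^{-2}(r/|z|)$, which is of order $1/\log^{2} r_A$ even for $s=O(1)$ and of order $1/\log r_A$ for $s=r_A^{1/2}$ --- far larger than the $O(r_A^{-u})$ you need. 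And these are precisely the loops for which the KMT coupling (sup-norm error $O(\log r_A)$) fails to determine the winding parity. The paper states this explicitly (``the measure of walks that are close to the origin is sufficiently large that we cannot just ignore this term'') and resolves it with an idea absent from your proposal (Claim 3 of Lemma \ref{lemma:loop-coupling}): conditionally on entering $\ball(0,r^{1/2})$, both the random walk loop and the Brownian loop have odd winding number with probability $\tfrac12+O(r^{-u})$, proved by a parity-randomizing coupling across dyadic annuli; hence the \emph{difference} of the odd-winding measures on this non-negligible set is still $O(r^{-u})$. Without this, or some substitute, your step (2) does not close.

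Two smaller points. First, the constant in your Brownian computation is wrong: $\mu(\tJ_{t\DD}\setminus\tJ_{\DD})=\tfrac18\log t$ (see \eqref{sept25.1}), not $\tfrac14\log t$; the correct chain is $m(\looper_A)=\tfrac18\log r_A+c_0+O(r_A^{-u})$, hence $\exp\{2m(\looper_A)\}=c_1 r_A^{1/4}[1+O(r_A^{-u})]$. Note also that your own assembly is internally inconsistent: $2m=\tfrac12\log r_A$ would give $r_A^{1/2}$ upon exponentiation, not $r_A^{1/4}$. Second, a single fixed-threshold split into ``small'' and ``large'' loops is not enough to extract a power-law rate: the coupling is only informative for loops whose diameter greatly exceeds the logarithmic coupling error, so one needs a multi-scale decomposition (the paper telescopes over the dyadic annuli $B(e^{j})$), in which the scale-$e^j$ comparison error is $O(e^{-ju})$ and the errors sum geometrically; this is simultaneously what produces the constant $c_0$ and the $O(r_A^{-u})$ rate.
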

\begin{proof}
See Section~\ref{sect:loops}. 
\end{proof}
The last factor in \eqref{de} is estimated using the following result, the proof of which is the main technical hurdle of the paper. For $z\in A$ and $b\in \p_e A$, let
\begin{equation}\label{poissondiscrete}
H_{A}(z,b) = \Prob^z(S_{\tau-1}=b_-, S_{\tau}=b_+)
\end{equation}
be the discrete Poisson kernel and
\[   \Lambda_A(z,a) = \frac{R_A(z,a)}{H_{ A}(z,a)}. \]
\begin{theorem} \label{thm:theorem4-new}
There exists $ u > 0$ and  $0 < c_2 < \infty$ such that if $A \in \dsets$,
\begin{equation}  \label{t41}
\Lambda_A(0,a) 
      =   c_2 r_A^{-1/2}\, \left[ \sin  \theta_a   + O\left(r_A^{-u}\right) \right];
    \end{equation} 
    \begin{equation}  \label{t42}
\Lambda_A(1,a) 
      =   c_2 \, r_A^{-1/2} \,\left[ \cos \theta_a + O\left(r_A^{-u}\right) \right] .
    \end{equation} 
\end{theorem}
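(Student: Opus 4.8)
The plan is to prove \eqref{t41} and \eqref{t42} by comparing the signed random walk quantity $R_A(z,a)$ with its Brownian motion counterpart, exploiting the fact that the continuous analogue is conformally covariant and explicitly computable. The key observation is that $R_A(z,a) = \E^z[Q(S[0,\tau-\tfrac12])\,I_a]$ is, up to the winding-sign factor $Q$ and the restriction $I_a$ (that $S$ avoids $\{0,1\}$), essentially a discrete Poisson kernel from $z$ to $a$; the sign $Q$ records whether the path winds an odd number of times around $w_0$, which is exactly the data of a single-valued branch of the square root on the double cover of $D_A$ branched at $w_0$. So the natural object to compare against is the harmonic function on the branched double cover, or equivalently the real/imaginary part of $f_A(z)^{1/2}$ (suitably normalized), evaluated near $z=0$ and $z=1$, which after pushing forward to $\Disk$ and Taylor expanding at $f_A(w_0)=0$ produces the $\sin\theta_a$ and $\cos\theta_a$ that appear. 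The factor $r_A^{-1/2}$ is the conformal covariance exponent $1/2$ coming from $|f_A'(w_0)|^{-1/2} \asymp r_A^{1/2}$ balanced against the Poisson kernel normalization.

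Concretely, I would proceed as follows. First, set up the branched double cover $\widehat{D_A}$ of $D_A$ at $w_0$ and observe that a random walk on $A$ with the signed weight $q$ corresponds (after loop considerations are absorbed) to a random walk on a lattice version of the double cover, so that $R_A(z,a)$ becomes a genuine (signed-free) hitting/Poisson quantity on $\widehat{D_A}$. Second, I would invoke a strong approximation / coupling of simple random walk on $A$ with planar Brownian motion --- the KMT coupling and the refinements from \cite{KL}, \cite{BJK} cited in the introduction --- valid up to the exit time, to replace $R_A(z,a)$ by the corresponding Brownian functional on $\widehat{D_A}$, with an error of order $r_A^{-u}$ coming from the coupling error together with boundary regularity estimates (Beurling, discrete vs.\ continuous Poisson kernel comparison) near $a$. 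Third, compute the Brownian quantity explicitly via conformal invariance: map $D_A$ to $\Disk$ by $f_A$, so the double cover becomes the double cover of $\Disk$ at $0$, uniformized by $w\mapsto \sqrt w$; the relevant Brownian observable is then (a constant times) the Poisson kernel of $\Disk$ composed with the square root, and expanding $\sqrt{f_A(z)}$ around $z=0$ and $z=1$ using $f_A(0) = f_A(w_0) + f_A'(w_0)\cdot\tfrac12 + \cdots$, $f_A(1) = f_A(w_0) + f_A'(w_0)\cdot(\tfrac12 + i) + \cdots$ (recalling $w_0 = \tfrac12 - \tfrac i2$, so $0 - w_0 = -\tfrac12 + \tfrac i2$ and $1 - w_0 = \tfrac12 + \tfrac i2$) one extracts the $\sin\theta_a$ and $\cos\theta_a$ (these are the real and imaginary parts of a suitable normalized square-root boundary value, using $f_A(a) = e^{2i\theta_a}$). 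Finally, I would divide by $H_A(z,a)$, using the known sharp asymptotics comparing the discrete Poisson kernel $H_A(z,a)$ with the continuous Poisson kernel (hence with $r_A^{-1}\,|\,$something$\,|$ after conformal mapping), to obtain $\Lambda_A(z,a)$ in the stated form with the common constant $c_2$ and error $O(r_A^{-u})$.

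The main obstacle, as the authors themselves flag ("the main technical hurdle of the paper"), is controlling everything uniformly near the marked vertices $0$ and $1$ --- which lie at \emph{lattice scale} distance $O(1)$ from the branch point $w_0$ --- while the approximation is being carried out. The square-root singularity of the double cover sits right there, so the Brownian observable is varying on scale $O(1)$ precisely where $z = 0, 1$ are, and the KMT coupling error (of polylogarithmic size in the global scale) must be shown not to swamp the $O(1)$-scale quantity one is trying to compute; this forces a careful local analysis, presumably working first at a fixed mesoscopic scale $\rho$ around $w_0$ (where one has good discrete-to-continuous control of the Poisson kernel and of the escape/Beurling-type probabilities that the introduction mentions proving), solving the problem there, and then propagating out to the full domain. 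A secondary difficulty is handling the boundary: near $a \in \p_e A$ one needs sharp discrete Poisson kernel asymptotics in the slit-type geometry, and the regularity of $\p D_A$ (a dual-lattice Jordan curve) must be good enough that the Beurling estimate gives the $r_A^{-u}$ rate --- this is where the auxiliary comparisons of discrete and continuous boundary Poisson kernels and Green's functions in slit square domains, advertised in the introduction, get used.
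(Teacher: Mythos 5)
Your plan for \eqref{t41} is essentially the paper's proof: the authors define the continuum observable $\lambda$ as a winding-parity expectation for an $h$-process (equivalently a spinor on the double cover branched at $w_0$), decompose $\Lambda_A(0,a)$ at the first exit of a mesoscopic slit square centered near the branch point, compare the slit-square harmonic measure with its continuum analogue by exact Fourier/separation-of-variables computations, use a KMT-type coupling of $h$-processes for the part of the path outside the square, and extract $\sin\theta_a$ from an explicit conformal computation --- exactly the architecture you outline, including the ``solve at a mesoscopic scale, then propagate outward'' structure.

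The one place your route diverges, and where it has a gap as written, is \eqref{t42}. You propose to obtain $\cos\theta_a$ by Taylor-expanding $\sqrt{f_A(z)}$ directly at $z=1$, but two things break there. First, the vertex $1$ lies on the discrete slit $\{0,\ldots,n\}$ along the positive real axis that is essential to the cancellation step (paths from $0$ hitting the slit before $\partial U$ are paired with their reflections and contribute zero), so the slit-square decomposition does not carry over verbatim to a walk started at $1$. Second, $\cos\theta_a$ changes sign at $\theta_a=\pi/2$, so $\Lambda_A(1,a)$ cannot be identified with a ratio of positive harmonic quantities as in \eqref{lamb}, and your sketch does not say where the sign comes from. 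The paper sidesteps both issues with a symmetry argument: reflecting $A$ across $\{\Re z = 1/2\}$ swaps $0$ and $1$, moves the branch cut from $f_A^{-1}([0,1))$ to the preimage of $(-1,0]$, and converts \eqref{t41} for the reflected domain into \eqref{t42} for $A$, with the sign of $\cos\theta_a$ emerging from tracking how $Q$ changes when the discontinuity of the argument is relocated. If you insist on a direct local analysis at $z=1$, you would need to rebuild the cancellation and sign bookkeeping from scratch; the reflection trick is the missing idea.
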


\begin{proof}
 See Section~\ref{sect:spinor convergence}.  
 \end{proof}
 
\begin{proof}[of Theorem~\ref{maintheorem} assuming Theorems~\ref{thm:det-formula}, \ref{thm:main-thm-loop-measure}, and \ref{thm:theorem4-new}.]
By Theorem~\ref{thm:theorem4-new},
\begin{eqnarray*}
\lefteqn{|\Lambda_A(0,a)\Lambda_A(1,b) - \Lambda_A(1,a)\Lambda_A(0,b)|}
  \hspace{1in}\\
    & = & c_3 r_A^{-1}\left| \sin(\theta_a)\cos(\theta_b) - \cos(\theta_a)\sin(\theta_b) \right| + O\left(r_A^{-1-u}\right) \\
& =  &c_3 r_A^{-1} \left| \sin \left( \theta_a -\theta_b \right)\right| \left[1  + O\left(\left| \sin \left( \theta_a -\theta_b \right)\right|^{-1}\,  r_A^{-u}\right) \right].
\end{eqnarray*} 
We can then use Theorem 1.1 of \cite{KL} which implies that if $|\sin( \theta_a -\theta_b)|
\geq r_A^{-1/20}$, then
\[ H_{  A}(0,a)\, H_A(0,b)
 = \frac{2}{\pi} \, \sin^2(\theta_a - \theta_b)\, H_{\p A}
   (a,b)\, \left[1 + O(r_A^{-u})\right]
    . \]
    But a difference estimate for discrete harmonic functions (see for instance Theorem 1.7.1 in \cite{greenbook}) shows that $H_A(0,a) = H_A(1,a)(1+O(r_A^{-1}))$ and so by combining these estimates we see that
    \begin{align*}
    \Phi_A(a,b) &= |\Lambda_A(0,a)\Lambda_A(1,b) - \Lambda_A(1,a)\Lambda(0,b)| \, \frac{H_A(0,a) \, H_A(0,b)}
        {H_{\p A}(a,b) }\, \left[1 + O(r_A^{-1}) \right] 
      \\
    & =c_4 r_A^{-1} |\sin^3( \theta_a - \theta_b)|\left[ 1 + O\left(\left| \sin \left( \theta_a -\theta_b \right)\right|^{-1}\,  r_A^{-u} 
 \right)\right],
    \end{align*}
    and consequently Theorem~\ref{maintheorem} follows from Theorem~\ref{thm:det-formula} combined  with \eqref{sept20.1},
  Theorem~\ref{thm:main-thm-loop-measure},
    and the last equation.  
\end{proof}

\section{Preliminaries}\label{sect:preliminaries} 
This section sets more notation and collects some background material primarily on loop measures and loop-erased walks.

%\subsection{Discrete quantities}

\subsection{Green's functions and Poisson kernels}
We summarize here some definitions and facts about discrete and continuum Green's functions and Poisson kernels. 
\begin{itemize}\setlength{\itemsep}{5pt}
\item{The (Dirichlet) Green's function (or Green's function for Brownian motion) in a simply connected domain $D \subset \mathbb{C}$, with pole at $w$, is the positive symmetric function $g_D(z,w)$ such that $g_D(z,w) + \log|z-w|$ is harmonic in $D$ and $g_D(z,w)=0$ if $w \in \p D$.}

\item{For a domain $D\subset \C$ if $w\in D$, $z\in\partial D$, and $\bd D$ is locally analytic at $z$, the Poisson kernel $h_D(w,z)$ is the density of harmonic measure with respect to Lebesgue measure and can be given as a normal derivative of $g_D$. In particular, for any piecewise locally analytic arc $F\subset \partial D$, 
$$\Prob^w(B(T)\in F) = \int_F h_D(w,z) \, d|z|.$$}

\item{If $w, z, \in \bd D$ and $\bd D$ is locally analytic at both $w$ and $z$, then it is useful to define the {\em excursion Poisson kernel} 
$$h_{\bd D}(w,z) =\lim_{\eps\to 0^+}\epsilon^{-1} \, h_D(w+\eps \mathbf{n}_w,z),$$
where $\mathbf{n}_w$ is the unit vector normal to $\bd D$ at $w$, pointing into $D$. Note that $h_{\p D}$ can be directly defined as a constant times the repeated normal derivatives in both variables of the Green's function, and we see that it is symmetric and conformally covariant. }

\item{One can define the excursion Poisson kernel at points where $\bd D$ is not locally analytic, such as at the tip of the slit of (smoothly) slit domains. The slit we consider in this paper is the positive real half-line and so the tip is the origin. Applying a conformal map to the unit disk, say, we can see that at such points, the derivative grows exactly like the inverse of the square root of the distance to the tip. So for such slit domains we may define 
\begin{equation*}\label{boundarypoisson}
h_{\bd D}(0,z) =\lim_{\eps\to 0^+}\eps^{-1/2}h_D(-\eps,z).
\end{equation*}}

\item{Similar objects are useful in the discrete setting.
Let $A \subsetneq  \ZZ^2$ be connected. Recall the definition for $w,z\in\bar{A}$ of the random walk Green's
function:
\[
G_A(z,w) = \sum_{ \substack{\omega : z \to w, \\ \omega \subset A}} p(\omega), \quad z,w \in A,
\]
and $G_A(z,w)=0$ if $z\in \p A$ or $w\in\p A$, as well as
the random walk $q$-Green's function of $A$, given in Subsection \ref{sect:key}:
\[
G^q_A(z,w) = \sum_{ \substack{\omega : z \to w, \\ \omega \subset A}} q(\omega), \quad z,w \in A,
\]
and $G^q_A(z,w)=0$ if $z\in \p A$ or $w\in\p A$, where the sums are over walks starting at $z$ and ending at $w$ and staying in $A$.}

\item{Let $A \in \mathcal{A}$ be given.
Another way to define  the Green's function
 is to let $S_j$ be 
  a simple random walk in $A$,  $\tau = \tau_A = \min\{j \geq 0: S_j
   \not \in A \}$, and   
   \[    G_A(z,w) = \E^z \left[ \sum_{j=0}^{\tau-1}
         1\{S_j = w \} \right],\;\;\;\;\; z,w \in A. \]
Using a last-exit decomposition, one can see that the Green's function is related to the Poisson kernel defined in \eqref{poissondiscrete} as follows:
\begin{equation}  \label{lastexit}
               H_A(z,a) = \frac 14 \, G_A(z,a_-) .
\end{equation}
It is known  \cite[Theorem 1.2]{KL} that there exists a  
  lattice-dependent constant $C_0$ and $c < \infty$ such that 
  \[ 
    \left| G_A(0,0) - \frac 2 \pi\,  \log r_A  -  C_0 \right | \leq c \, \frac{\log r_A}{r_A^{1/3}  }. \]
For our purposes, it will suffice to use
\[   G_A(0,0) = \frac 2 \pi \,\log
r_A  + O(1) , \]
  where, as before,  $r_A$ is the conformal radius of $D_A$.}

\item{We will need the following result    
which follows from \cite[(40)--(41)]{KL}.  An
   explicit $u$ can be deduced from these estimates, but
   it is small and not optimal so we will not give it here.

\begin{theorem} \label{MGthm}
There exists $u > 0$ such that if $z \in A$ with $|f(z)|
  \leq 1 - r_A^{-u}$, then
  \[      H_A(z,a) = {H_A(0,\ze)}  \,  \frac{1-|f(z)|^2}
{|f(z)- e^{i2\theta_a}|^2} \, \left[1 + \bigo{r_A^{-u}} \right].\]
and hence, 
\begin{equation}\label{MGthm.eq}
   \frac{h_A(z,a)}{h_A(0,a)} = \frac{H_A(z,a)}{H_A(z,0)}
 \,  \left[1 + \bigo{r_A^{-u}} \right] .
 \end{equation}
Moreover, if $|\theta_a - \theta_b| \geq r_A^{-1/20}$, 

$$   H_{\p A}(a,b) =  \frac{\pi}{2}
     \,\frac{ H_A(0,a) \, H_A(0,b) }
        {\sin^2(\theta_a - \theta_b)}
         \, \left[ 1 + O \left( r_A^{-u}\right) \right]. 
$$

   \end{theorem}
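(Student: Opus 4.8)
The plan is to obtain all three assertions from the discrete-to-continuum comparison of Poisson kernels in \cite[(40)--(41)]{KL}, together with the explicit Poisson kernel and excursion Poisson kernel of the unit disk carried over to $D_A$ by conformal invariance of harmonic measure. The continuum facts I would record first are: $h_{\Disk}(w,e^{i\phi}) = (2\pi)^{-1}(1-|w|^2)\,|w-e^{i\phi}|^{-2}$, so $h_{\Disk}(0,e^{i\phi}) = (2\pi)^{-1}$; conformal invariance $h_{D_A}(z,\cdot) = h_{\Disk}(f(z),\cdot)\circ f$ up to the boundary Jacobian $|f'|$ (legitimate near $a$ because $\p D_A$ is a straight segment at each dual edge, so $f$ extends by reflection); and the observation that $0$ and $w_0$ are interchangeable up to a multiplicative $1 + O(r_A^{-1})$ throughout, since $|f(0)| = O(r_A^{-1})$. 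Because $f(w_0) = 0$ and $f(a) = e^{i2\theta_a}$, these give
\[
\frac{h_{D_A}(z,a)}{h_{D_A}(0,a)} \;=\; \frac{h_{\Disk}(f(z),e^{i2\theta_a})}{h_{\Disk}(0,e^{i2\theta_a})} \;=\; \frac{1-|f(z)|^2}{|f(z)-e^{i2\theta_a}|^2}.
\]

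For the first two assertions, \cite[(40)--(41)]{KL} give, uniformly in $A$, $z$ and $a$ and for $z$ with $|f(z)| \le 1 - r_A^{-u}$, the bound $H_A(z,a)/H_A(0,a) = [h_{D_A}(z,a)/h_{D_A}(0,a)]\,[1 + O(r_A^{-u})]$; combined with the identity above this is \eqref{MGthm.eq}, and multiplying through by $H_A(0,a)$ yields the first displayed formula.

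For the third assertion I would first compute the disk excursion kernel by differentiating $w\mapsto h_{\Disk}(w,e^{i\beta})$ radially at $|w|=1$: $h_{\p\Disk}(e^{i\alpha},e^{i\beta}) = \pi^{-1}|e^{i\alpha}-e^{i\beta}|^{-2} = (4\pi)^{-1}\sin^{-2}((\alpha-\beta)/2)$, hence $h_{\p\Disk}(e^{i2\theta_a},e^{i2\theta_b}) = (4\pi)^{-1}\sin^{-2}(\theta_a-\theta_b)$. Reading $|f'(a)| = 2\pi\,h_{D_A}(w_0,a)$ off $h_{D_A}(w_0,a) = |f'(a)|\,h_{\Disk}(0,e^{i2\theta_a})$ and using conformal covariance of the excursion kernel, $h_{\p D_A}(a,b) = |f'(a)|\,|f'(b)|\,h_{\p\Disk}(f(a),f(b))$, one gets the purely continuum identity $h_{\p D_A}(a,b) = \pi\,h_{D_A}(w_0,a)\,h_{D_A}(w_0,b)\,\sin^{-2}(\theta_a-\theta_b)$. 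It then remains to pass to the lattice: \cite[(40)--(41)]{KL} also compare $H_{\p A}(a,b)$ with $h_{\p D_A}(a,b)$ and $H_A(0,a)$ with $h_{D_A}(w_0,a)$ up to $1 + O(r_A^{-u})$, and the normalization constants --- which come from \eqref{lastexit}, the identity $H_{\p A}(a,b) = \tfrac14 H_A(a_-,b)$, the local flatness of $\p D_A$ at the dual edge $a$, and the standard $\ZZ^2$ lattice constants for the Green's function --- combine so that the continuum $\pi$ becomes the asserted $\tfrac{\pi}{2}$; I would quote this bookkeeping from \cite{KL} rather than redo it. The hypothesis $|\theta_a-\theta_b| \ge r_A^{-1/20}$ enters only to guarantee $\sin^2(\theta_a-\theta_b) \gtrsim r_A^{-1/10}$, so that dividing by this quantity keeps the relative error $o(1)$ once $u < 1/10$.

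The one genuinely difficult ingredient is the uniform discrete-to-continuum comparison of interior and boundary Poisson kernels near the rough boundary $\p D_A$ --- a path in the dual lattice, only piecewise flat --- together with the control of the $O(r_A^{-u})$ error as $z$ approaches $\p D_A$, which is what forces the cutoff $|f(z)| \le 1 - r_A^{-u}$; this is precisely what \cite[(40)--(41)]{KL} supply. Given those estimates, the remaining work is the conformal-geometry computations above, the tracking of normalization constants to reach $\tfrac{\pi}{2}$ exactly, and choosing $u$ small enough that the threshold $r_A^{-1/20}$ keeps the $\sin^{-2}$ factor from overwhelming the error.
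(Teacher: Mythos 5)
Your proposal is correct and matches the paper's treatment: the paper gives no proof of this theorem at all, simply quoting it as a consequence of \cite[(40)--(41)]{KL}, and your reconstruction --- deferring the uniform discrete-to-continuum comparison of interior and boundary Poisson kernels to \cite{KL} while supplying the disk computations $h_{\p\Disk}(e^{i2\theta_a},e^{i2\theta_b})=(4\pi)^{-1}\sin^{-2}(\theta_a-\theta_b)$ and $|f'(a)|=2\pi h_{D_A}(w_0,a)$, and absorbing the $0$ versus $w_0$ discrepancy into a $1+O(r_A^{-1})$ factor --- is exactly the intended route. Incidentally, your reading of \eqref{MGthm.eq} with $H_A(0,a)$ in the denominator is the right one; the $H_A(z,0)$ appearing in the displayed statement is a typo, as the later invocations of the theorem (e.g.\ \eqref{jan11.2}) confirm.
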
}
   \end{itemize}

\subsection{Loops and loop measures}
We will consider oriented rooted \emph{loops} on $\ZZ^2$, that is, walks starting and ending at the same vertex:\[\ell=[\ell_0, \ell_1, \ldots, \ell_{\tau}], \quad \ell_0=\ell_\tau.\] (Unless otherwise stated, we will simply say ``loop''.) The length of a loop, $|\ell|=\tau$, is clearly an even integer. The vertex $\ell_0$ is the \emph{root} of $\ell$. The edge representation of a rooted loop is the sequence of directed edges
\[
e(\ell)=[\overrightarrow{e}_1, \ldots, \overrightarrow{e}_\tau], \quad \overrightarrow{e}_j = [\ell_{j-1}, \ell_j].
\]
Note that each vertex in $\ell$ occurs in an even number of edges in $e(\ell)$.
 
Let $\mathcal{L}_{*}(A)$ be the set of discrete (rooted) loops contained in $A$ and write $\mathcal{L}_*=\mathcal{L}_*({\ZZ^2})$. The {\em  rooted loop measures}, $m_*,m_{*}^q$, associated with $p,q$, respectively are the   measures on rooted loops and are defined by $m _*(\ell) = m^q_*(\ell) = 0$ if $|\ell| = 0$, and otherwise
\[
m_{*}(\ell) = \frac{p(\ell)}{|\ell|}, \quad \ell \in \mathcal{L}_{*}.
\]
\[
m_{*}^q(\ell) = \frac{q(\ell)}{|\ell|}
 =  \frac{p(\ell)\, Q(\ell)}{|\ell|}, \quad \ell \in \mathcal{L}_{*}.
\]
An {\em  unrooted loop} $[\ell]$ is an equivalence class of rooted loops where two rooted loops are equivalent if and only if the edge representation of one can be obtained from that of the other by a cyclic permutation of indices. Clearly the lengths of two equivalent loops are the same, so we can write $|[\ell]|$ for this number. We write $\#[\ell]$ for the number of equivalent rooted loops in $[\ell]$; this is always an integer dividing $|[\ell]|$. Moreover, the weights of all equivalent loops in a given class are the same so we may write $p([\ell])$ and $q([\ell])$.

 We write $\mathcal{L}(A)$ for the set of unrooted loops 
whose representatives are in $\mathcal{L}_*(A)$
  and $\mathcal{L}=\mathcal{L}(\ZZ^2)$.
The {\em  loop measures}, $m,m^q$, are the measure on unrooted loops induced by $m_*,m_{*}^q$:
\[
m ([\ell]) = \sum_{\ell \in [\ell]}m_{*} (\ell) = \frac{\#[\ell]}{|\ell|}p(\ell), \quad [\ell] \in \mathcal{L},
\]  
\[
m^q([\ell]) = \sum_{\ell \in [\ell]}m_{*}^q(\ell) = \frac{\#[\ell]}{|\ell|}q(\ell), \quad [\ell] \in \mathcal{L},
\]  
and where the sums are  over the representatives of $[\ell]$.
If  $\ell$ is a rooted loop, we write $m(\ell), m^q(\ell)$
for  $m([\ell]),m^q([\ell])$.

Suppose that $V \subset A \subset \ZZ^2$, where $A$ is finite. Consider the set of loops contained in $A$ that meet $V$: \[\mathcal{L}(V;A) = \{[\ell] \in \mathcal{L}(A) :  \ell \cap V \neq \emptyset \},\] and  define
$\mathcal{L}_*(V;A)$   similarly using rooted loops.
 Let
 \[
F(V;A)=\exp\left\{  m\left[ \mathcal{L}(V;A) \right] \right\}
 = \exp\left\{  m_*\left[ \mathcal{L}_*(V;A) \right] \right\} ,\]
 \[
F^q(V;A)=\exp\left\{  m^q\left[ \mathcal{L}(V;A) \right] \right\}
 = \exp\left\{  m^q_*\left[ \mathcal{L}_*(V;A) \right] \right\}. \]
It follows from the second equality  that if
  $V=\cup_{i=1}^k V_i$, with the $V_i$ disjoint and
  $A_j = A \setminus (\cup_{i=1}^j V_i)$, then
\begin{equation}\label{lem:FVA}
F^q(V;A) = F^q(V_1;A) F^q(V_2; A_1) \cdots F^q(V_k; A_{k-1}),
\end{equation}
and similarly for $F(V;A)$. 
 In particular, the right-hand side of \eqref{lem:FVA} is independent of the order of the $V_i$ partitioning $V$.
 
 If $V = \{z\}$
is a singleton set, we write just
$\mathcal{L}(z;A),$ $ \mathcal{L}_*(z;A), F(z;A)$, and $F^q(z;A)$. 

 \begin{lemma}\label{lem:greens-function-loop-measure}
Suppose that $z \in A \subsetneq \ZZ^2$. Then
\[
G_A(z,z) =\sum_{\ell \in \mathcal{L}_*(A) : \, \ell_0=z} p(\ell)=e^{ m\left[ \mathcal{L}(z;A) \right]}=F(z;A),
\]
\[
G_A^q(z,z) =\sum_{\ell \in \mathcal{L}_*(A) : \, \ell_0=z} q(\ell)=e^{ m^q\left[ \mathcal{L}(z;A) \right]}=F^q(z;A).
\]
\end{lemma}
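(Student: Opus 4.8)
The plan is to prove both displayed identities at once, since the argument is insensitive to whether one uses the weight $p$ or the signed weight $q$: the only properties used are that the weight is multiplicative under concatenation ($q(\omega_1\oplus\omega_2)=q(\omega_1)q(\omega_2)$, which for $q$ follows from \eqref{5-31}), that it is invariant under cyclic rotation of the edge representation of a loop in $A$ (so $q([\ell])$ is well defined, again by \eqref{5-31}), and that $|q(\ell)|=p(\ell)$, which supplies the absolute convergence needed to rearrange sums. I describe the steps for $p$; the $q$-version is verbatim.

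\emph{Step 1: reduce to a sum over loops rooted at $z$.} By the walk-sum definition of the Green's function recalled above, $G_A(z,z)=\sum_{\omega:z\to z,\ \omega\subset A}p(\omega)$, and a walk in $A$ from $z$ to $z$ is precisely a rooted loop in $\mathcal{L}_*(A)$ with root $z$ (the trivial loop of length $0$, contributing $1$, included). This gives the first equality. \emph{Step 2: first-return decomposition.} Every rooted loop at $z$ of positive length factors uniquely as $\ell=\rho_1\oplus\cdots\oplus\rho_k$ with each $\rho_i$ a loop rooted at $z$, contained in $A$, visiting $z$ only at its endpoints; $k$ equals the number of visits of $\ell$ to $z$. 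Writing $\beta$ for the total $p$-weight of these ``first-return loops,'' multiplicativity gives $\sum_{\ell:\ell_0=z}p(\ell)=\sum_{k\ge0}\beta^k=(1-\beta)^{-1}$, the series converging because $A$ is finite, hence $\beta<1$. Taking logarithms, $\log G_A(z,z)=-\log(1-\beta)=\sum_{k\ge1}\beta^k/k$.

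\emph{Step 3: identify the series with the loop measure.} Expand $\sum_{k\ge1}\beta^k/k=\sum_{k\ge1}\tfrac1k\sum_{(\rho_1,\dots,\rho_k)}p(\rho_1)\cdots p(\rho_k)$, the inner sum over $k$-tuples of first-return loops (all terms $\ge 0$, so the regrouping below is legitimate). Group the tuples by the unrooted class $[\ell]$ of $\rho_1\oplus\cdots\oplus\rho_k$. For a fixed class meeting $z$, the first-return decomposition is a bijection between its distinct rooted-at-$z$ representatives and the tuples landing in it; since re-rooting at two visits to $z$ yields the same rooted loop exactly when their positions differ by a multiple of $\#[\ell]$ (the stabilizer of $[\ell]$ under cyclic shifts of the root is generated by $\#[\ell]$), and the visit pattern has period $\#[\ell]$, there are exactly $N_0:=N\,\#[\ell]/|[\ell]|$ such tuples, where $N$ is the number of visits to $z$ per period; each has $k=N$ and product weight $p([\ell])$. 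The contribution of $[\ell]$ is therefore $N_0\cdot\frac1N\cdot p([\ell])=\frac{\#[\ell]}{|[\ell]|}\,p([\ell])=m([\ell])$. Summing over classes meeting $z$ gives $\sum_{k\ge1}\beta^k/k=m[\mathcal{L}(z;A)]$, whence $G_A(z,z)=(1-\beta)^{-1}=\exp\{m[\mathcal{L}(z;A)]\}=F(z;A)$.

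\emph{The $q$-case and the main difficulty.} Steps 1--3 apply verbatim with $q$ in place of $p$: the first-return decomposition and multiplicativity give $G_A^q(z,z)=\sum_{k\ge0}\beta_q^k=(1-\beta_q)^{-1}$ with $|\beta_q|\le\beta<1$; cyclic invariance of $q$ makes the Step~3 count go through unchanged; and $\sum_k\tfrac1k\sum_{(\rho_i)}|q(\rho_1)\cdots q(\rho_k)|=\sum_k\beta^k/k<\infty$ justifies the rearrangement, yielding $G_A^q(z,z)=\exp\{m^q[\mathcal{L}(z;A)]\}=F^q(z;A)$. The only genuinely delicate point is the combinatorial bookkeeping in Step~3: one must count, for each unrooted class, the number of first-return tuples mapping to it while accounting for the loop's possible periodicity, so that the $1/k$ weight built into the loop measure reassembles into the multiplicity $\#[\ell]/|[\ell]|$. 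Everything else is elementary manipulation of geometric series.
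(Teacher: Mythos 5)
Your proof is correct and follows essentially the same route as the paper's: decompose each rooted loop at $z$ uniquely into first-return (elementary) loops, sum the geometric series to get $G^q_A(z,z)=(1-\beta_q)^{-1}$, and identify $-\log(1-\beta_q)=\sum_{k\ge1}\beta_q^k/k$ with the unrooted loop measure via the multiplicity $\#[\ell]/|[\ell]|$, observing that the argument is purely algebraic so the signed weight $q$ causes no difficulty. The one caveat is that in Step~3 your $N$ must be read as the number of visits to $z$ in one full traversal of the loop (not per minimal period $\#[\ell]$) for the assertions ``there are exactly $N_0$ such tuples'' and ``each has $k=N$'' to be literally true; with that reading your count agrees with the paper's $j/k=\#[\ell]/|[\ell]|$.
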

\begin{proof}
See \cite[Lemma~9.3.2]{lawler_limic}.  Although that book
only studies positive measures, the proof is entirely algebraic
and holds when using the weight $q$ as well.    Let us generalize the proof here.
 
Suppose $L$ is a set of nontrivial
loops rooted at a point $z\in A$ with the
property that if $\ell_1,\ell_2 \in L$, then $\ell_1 \oplus\ell_2 \in \ L$.
Let $L_1$ be the set of elementary loops, that is, the set of loops
in $L$ that cannot be written as $\ell_1 \oplus \ell_2$ with
$\ell_1,\ell_2 \in L$.  Let $L_k$ denote the set of loops
of the form  
\[       \ell = \ell_1 \oplus \cdots \oplus \ell_k, \;\;\; \ell_j \in L_1 . \]
Then $L = \bigcup_{k=1}^\infty L_k$.  Suppose now, as is true in the case we will be considering, that every loop in $L$ admits a unique (up to translation) such decomposition into concatenated elementary loops. In this case there is a unique $k$ such that $\ell \in L_k$. We may then consider the measure
$\lambda$
on loops in $L$ that assigns measure $k^{-1} q(\ell) $
to $\ell \in L_k$.  
Let $L'$ denote the set of unrooted loops that have at least
one representative in $L$.
 Then the measure $\lambda$ viewed as a measure
on unrooted loops is the same as the loop measure of unrooted
loops $[\ell]$ restricted to $L'$. 
Indeed, this measure assigns measure $ (j/k) \, q([\ell])$ to
$[\ell]$ where $j$ is the number of distinct loops  among
\[   \ell_1 \oplus \ell_2 \oplus \cdots \oplus \ell_k,\;\; \ell_2 \oplus
\ell_3 \oplus  \cdots \oplus
\ell_k \oplus \ell_1,\;\; \cdots, \;\;\ell_{k} \oplus \ell_1 \oplus \cdots
  \oplus \ell_{k-1}. \]
If $|q(L_1)| < 1$, as in our particular case, then
\begin{equation}\label{72615a}
        m^q[L']  = \sum_{k=1}^\infty \frac 1k \, q(L_k)
 = \sum_{k=1}^\infty \frac 1k \, q(L_1)^k
     = -\log \left[1 - q(L_1) \right]. 
\end{equation}
     
  In our particular case, if  $L_1$ is the set of nontrivial
   loops $\ell$ starting
 and ending at $z$, staying in $A$ and otherwise not visiting
 $z$,  then it is not hard to see that
\begin{equation}\label{72615b}
   G^q(z,z;A) = \frac{1}{1 - q(L_1)}. 
   \end{equation}
Combining \eqref{72615a} and \eqref{72615b} concludes the proof.  
\end{proof}

  \subsection{Loop-erased Random Walk}
  Let $\omega = [\omega_0, \ldots, \omega_\tau]$ be a walk with $\tau < \infty$. We say that $\omega$ is self-avoiding if $i \neq j$ implies that $\omega_i \neq \omega_j$. The {\em loop-erasure} of $\omega$, $\operatorname{LE}[\omega]$, is a self-avoiding walk defined as follows:
  \begin{itemize} \setlength{\itemsep}{5pt}
  \item{If $\omega$ is self-avoiding, set $\operatorname{LE}[\omega] = \omega$.}
  \item{Otherwise, set $s_0 = \max\{ j \le \tau : \omega_j=\omega_0 \}$ and let $\operatorname{LE}[\omega]_0 = \omega_{s_0}$;}
  \item{For $i \geq 0$, if $s_i < \tau$, set $s_{i+1} = \max\{j \le \tau: \omega_j = \omega_{s_i}\}$ and let $\operatorname{LE}[\omega]_{i+1} = \omega_{s_i+1}$.} 
  \end{itemize}
  We can now define the ``loop-erased $q$-measure" of walks $\eta$ staying in $A$:
  \begin{equation}\label{def:loop-erased-q-measure}
  \hat{q}(\eta;A) := \sum_{\omega \subset A : \, \operatorname{LE}[\omega] = \eta} q(\omega) = q(\eta) F^q(\eta; A), 
  \end{equation}
  and we define $\hat{p}$ in the same manner, replacing $q$ by $p$. (We will often omit writing out $A$ explicitly so that $\hat{q}(\eta)=\hat{q}(\eta;A)$ where no confusion is possible.) 
  Note that this quantity is zero if $\eta$ is not self-avoiding. The second identity in \eqref{def:loop-erased-q-measure} is
  proved in \cite[Proposition~9.5.1]{lawler_limic}  by observing that one can write any walk $\omega$ as a concatenation of the loops erased by the loop-erasing algorithm and the self-avoiding segments of $\operatorname{LE}[\omega]$ ``between'' the loops, and then using \eqref{lem:FVA} and Lemma~\ref{lem:greens-function-loop-measure}.  Again, although \cite{lawler_limic} deals with positive weights, the proof is equally valid when using the weight $q$.

\subsection{Fomin's identity}\label{fominsection}
What we call Fomin's identity  
is a generalization for LERW of a well-known result of Karlin-McGregor. It is a combinatorial identity that, informally speaking, allows one to express a loop-erased quantity as a determinant of random walk quantities, the latter being easier to estimate. See \cite{fomin-lerw} or Chapter 9.6 of \cite{lawler_limic} for more information and additional references. 
We state here the particular case of Fomin's identity which we will need. Let $A \in \mathcal{A}$ with two marked edges $a,b \in \p_e A$, and set $K = A \setminus [0,1]$. The idea of the proof of the lemma below is to construct a bijection between the set of (pairs of) walks that run from $b$ to $1$ and intersect the loop-erasure of a walk from $a$ to $0$ and the set of (pairs of) walks that run from $b$ to $0$ and intersect the loop-erasure of a walk from $a$ to $1$. The existence of this bijection implies that the corresponding terms cancel in the expression on the right-hand side of \eqref{eq:fomin} and the result is the expression on the left-hand side.  
\begin{lemma}
If $A \in \mathcal{A}, a,b \in \p_e A$, and $K = A \setminus [0,1]$, then for walks $\omega^1, \omega^2$ that start and end on the boundary of $K$ and otherwise stay in $K$, we have
\begin{equation}\label{eq:fomin}
\begin{split}
\sum_{\rwpath {\omega^1} {a} 0 }   & \sum_{\substack{\rwpath {\omega^2} {b} 1  \\ \omega^2 \cap \operatorname{LE}[\omega^1] = \emptyset}} p(\omega^1)p(\omega^2) - \sum_{\rwpath {\omega^1} {a} 1 }  \sum_{\substack{\rwpath {\omega^2} {b} 0  \\ \omega^2 \cap \operatorname{LE}[\omega^1] = \emptyset}}p(\omega^1)p(\omega^2)  \\
& = \sum_{\rwpath {\omega^1} {a} 0 }\sum_{\rwpath {\omega^2} {b} 1 }p(\omega^1)p(\omega^2) - \sum_{\rwpath {\omega^1} {a} 1 }\sum_{\rwpath {\omega^2} {b} 0 }p(\omega^1)p(\omega^2). 
\end{split}
\end{equation}
\end{lemma}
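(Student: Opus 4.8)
The plan is to construct an involution on the set of pairs of paths that appear on the right-hand side of \eqref{eq:fomin}, which pairs up a path pair contributing to the first double sum with a path pair contributing to the second double sum, in such a way that the surviving (fixed or unpaired) configurations are exactly those on the left-hand side. More precisely, consider a pair $(\omega^1, \omega^2)$ where $\omega^1: a \to 0$ and $\omega^2: b \to 1$ (contributing with a $+$ sign) or $\omega^1: a \to 1$ and $\omega^2: b \to 0$ (contributing with a $-$ sign). The only pairs appearing on the left-hand side are those of the first type with $\omega^2 \cap \operatorname{LE}[\omega^1] = \emptyset$, so the goal is to cancel all pairs of the second type together with all pairs of the first type for which $\omega^2$ does meet $\operatorname{LE}[\omega^1]$.

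First I would set up the switching map. Suppose $\omega^2 \cap \operatorname{LE}[\omega^1] \neq \emptyset$. Let $\gamma = \operatorname{LE}[\omega^1]$ and consider the \emph{last} point (along $\gamma$, say, parametrized from its start at $0$ or $1$) at which $\gamma$ meets $\omega^2$; call it $v$, and let it occur at index $k$ on $\gamma$ and at some index $j$ on $\omega^2$. Since $\omega^1$ and $\omega^2$ are weighted by $p$, which is multiplicative and depends only on length, we may cut and reglue: form $\tilde\omega^1$ by following $\omega^1$ up to the point where its loop-erasure has reached $v$ (i.e.\ the last visit of $\omega^1$ to $v$, using the structure of the loop-erasing algorithm) and then appending the tail of $\omega^2$ from $v$ onward; form $\tilde\omega^2$ by following $\omega^2$ up to its visit to $v$ at index $j$ and then appending the tail of $\gamma$ from $v$ onward —- but this tail of $\gamma$ is a self-avoiding piece of $\omega^1$, so one must be slightly careful and instead append the corresponding portion of $\omega^1$ after its last visit to $v$. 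The key point is that this operation swaps the endpoints $0 \leftrightarrow 1$ between the two paths, turns a $+$ configuration into a $-$ configuration and vice versa, preserves $p(\omega^1)p(\omega^2)$ because total length is preserved, and is an involution because the crucial data — the loop-erasure of the first path and the ``last intersection point'' — is reconstructed identically after the swap. Hence all such pairs cancel and only the pairs with $\omega^2 \cap \operatorname{LE}[\omega^1] = \emptyset$ (necessarily of the first type) survive, which is exactly \eqref{eq:fomin}.

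The main obstacle, and the step requiring real care, is verifying that the switching map is genuinely well-defined and involutive. The subtlety is that loop-erasure is not symmetric under time reversal, so one must check that after performing the swap the quantity ``$\operatorname{LE}$ of the new first path'' behaves predictably: one needs that the last intersection of $\operatorname{LE}[\tilde\omega^1]$ with $\tilde\omega^2$ is again $v$, occurring so that applying the map a second time returns the original pair $(\omega^1,\omega^2)$. This is precisely the content of Fomin's combinatorial argument (and the Karlin–McGregor-type reflection it generalizes); the cleanest route is to cite \cite{fomin-lerw} or Chapter 9.6 of \cite{lawler_limic} for the bijection and simply record that it preserves the simple-random-walk weight $p$ because $p(\omega) = 4^{-|\omega|}$ and the swap preserves total length $|\omega^1| + |\omega^2|$. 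I would also note the harmless bookkeeping point that the paths are required to start and end on $\p K$ and otherwise stay in $K = A \setminus [0,1]$, which is automatic here since $0$ and $1$ lie on $\p K$ and the walks in $\paths(A; \cdot, \cdot)$ visiting $0$ or $1$ do so only at their designated endpoints after removing the edge $[0,1]$; this is exactly the setting in which Fomin's identity applies verbatim.
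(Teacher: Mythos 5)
Your proposal follows essentially the same route as the paper: the paper's entire proof of this lemma consists of describing the tail-swapping bijection between pairs $(\omega^1,\omega^2)$ of the two orientations for which $\omega^2$ meets $\operatorname{LE}[\omega^1]$, and deferring the verification that the switch is a weight-preserving involution to \cite{fomin-lerw} and Chapter 9.6 of \cite{lawler_limic}. Your description of the switch (cut $\omega^1$ at its last visit to the last point $v$ of $\operatorname{LE}[\omega^1]$ hit by $\omega^2$, cut $\omega^2$ at $v$, exchange tails) and of the one delicate point --- that the loop-erasure and the distinguished point $v$ are reproduced after the swap, so the map is an involution and the total length, hence $p(\omega^1)p(\omega^2)$, is preserved --- is the standard argument and matches the intended proof.

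One assertion in your write-up is false, though it does not affect the validity of the argument: you claim that the surviving pairs with $\omega^2\cap\operatorname{LE}[\omega^1]=\emptyset$ are ``necessarily of the first type,'' i.e.\ that the second double sum on the left-hand side of \eqref{eq:fomin} is empty. There is no topological obstruction forcing a walk $\omega^2:b\to 0$ to cross $\operatorname{LE}[\omega^1]$ for $\omega^1:a\to 1$, since $0$ and $1$ are interior vertices of $A$ rather than interleaved boundary points (take $A$ a large box with $a$ above and $b$ below the edge $[0,1]$, each walk running directly to its endpoint). The switching map is defined only on pairs with nonempty intersection, so what survives the cancellation is the signed sum over non-intersecting pairs of \emph{both} orientations, and that signed sum is exactly the left-hand side of \eqref{eq:fomin}; indeed both terms are needed later, in the proof of Theorem~\ref{thm:det-formula2}, where they produce the two products in $\Delta_K$. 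Deleting the two parenthetical claims leaves a correct proof.
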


\subsection{Brownian loop measure}
The random walk loop measure $m$  defined in a previous section has a conformally invariant scaling limit, the Brownian loop measure $\mu$. It is a sigma-finite measure on equivalence classes of continuous loops $\omega : [0, t_\omega] \to \mathbb{C}, \, \omega(0)=\omega(t_\omega)$, with the equivalence relation given by $\omega_1 \sim \omega_2$ if there is $s$ such that $\omega_1(t) = \omega_2(t+s)$ (with addition modulo $t_{\omega_2}$); see \cite{loop-soup}. One can construct $\mu$ via the Brownian bubble measure $\mubub$: a bubble in a domain $D$, rooted at $a \in \p D$, is a continuous function $\omega: [0, t_{\omega}] \to \C$ with $\omega(0) = \omega(t_{\omega}) =a \in \partial D$ and $\omega(0, t_{\omega}) \subset D$. The bubble measure is conformally covariant (with scaling exponent $2$, see \eqref{def:bubble-covariance}) so it is enough to specify the scaling rule and give the definition in one reference domain, say $\DD$. Let 
\begin{equation}\label{poissondisk}
h_\DD(z,a)=\frac{1}{2\pi} \frac{1 - |z|^2}{|z-a|^2}
\end{equation}
be the Poisson kernel of $\DD$. Note that the Poisson kernel is conformally covariant (which is easily checked, but see Section 2.3 of \cite{Lawler_cip} for a proof).
Let $\Prob^{z,a}$ be the law of an $h$-process derived from Brownian motion and the harmonic function $h_\DD(z,a)$ (see below); informally, this $h$-process is a Brownian motion from $z$ conditioned to exit $\DD$ at $a$. We define
\begin{equation}\label{def:bubble}
\mubub_\DD(1) = \pi \lim_{\ee \to 0}  \ee^{-1} h_\DD(1-\ee,1) \Prob^{1-\ee,1}.
\end{equation}
The $\pi$ factor is present to match the notion of \cite{loop-soup} and is chosen so that the measure of bubbles in $\mathbb{H}$ rooted at $0$ that intersect the unit circle equals $1$. See also Chapter~5 of \cite{Lawler_cip} for a discussion of the suitable metric spaces on which these measures are defined.
Suppose $\varphi: \DD \to D$ is a conformal map and that $\p D$ is locally analytic at $\varphi(1)$. Then if we write
\[
\varphi \circ \mubub_\DD(1)[A] := \mubub_\DD(1)[ \{\omega : \varphi \circ \omega \in A \} ],
\]
we have the following scaling rule
%see that \eqref{def:bubble} and the covariance rule for the Poisson kernel $h_\DD$ imply 
\begin{equation}\label{def:bubble-covariance}
\varphi \circ \mubub_\DD(1) = |\varphi'(1)|^2\mubub_{D}(\varphi(1)).
\end{equation}
We can now define the Brownian loop measure restricted to loops in $\DD$ as the measure on unrooted loops induced by
\begin{equation}\label{def:brownian-loop-measure}
\mu = \frac{1}{\pi} \int_0^{2\pi} \int_0^1 \mubub_{r \DD}(r e^{i\theta}) r dr d\theta.
\end{equation}
The Brownian loop measure in other domains can then be defined by conformal invariance. 

The next lemma makes precise that the random walk and Brownian loop measures are close on large enough scales.

\begin{lemma}\label{lem:coupling}
There exist constants $\theta > 0$ and $c_1 < \infty$ and for all $n$ sufficiently large a coupling of the Brownian and random walk loop measures, $\mu$ and $m$, respectively, in which the following holds. There is a set $\mathcal{E}$ whose complement has measure at most $e^{-\theta n}$ and on $\mathcal{E}$ we have that all pairs of loops $(\omega, \ell)$ ($\omega$ Brownian loop and $\ell$ random walk loop) with \[ \diam \omega \ge e^{n(1-2\theta)}  \quad \text{ or } \quad \diam \ell \ge e^{n(1-2\theta)}\] satisfy \[||\omega -\ell|| \le c_1 n,\]
where $||\omega - \ell|| = \inf_\alpha ||\omega \circ \alpha - \ell||_\infty$ with the infimum taken over increasing reparameterizations.  
\end{lemma}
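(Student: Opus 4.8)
The plan is to build the coupling from the standard dyadic decomposition of the Brownian loop measure together with the Skorokhod/KMT strong approximation applied scale by scale, in the spirit of the known constructions coupling the random walk and Brownian loop soups (see \cite{loop-soup} and the comparison results used in Section~\ref{sect:loops}). First I would recall that both $\mu$ and $m$, restricted to loops of diameter $\ge \delta$ in a fixed large ball, are \emph{finite} measures, and that $m$ can be written (via the rooted loop measure and the loop-erasing/decomposition machinery already set up above) as an integral over a root point $z \in \ZZ^2$ and a time length $2k$ of the bridge measure of simple random walk of length $2k$ from $z$ to $z$, with total mass $G_{\ZZ^2\cap(\text{ball})}$-type normalization; the Brownian loop measure \eqref{def:brownian-loop-measure} has the analogous representation with Brownian bridges. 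The key point is that these two ``disintegrations'' are close: the number of steps $2k$ of a random walk loop and the time length $t$ of the corresponding Brownian loop match up to lower-order corrections, and the bridge laws themselves can be coupled.

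The main steps, in order, would be: (i) Fix $n$ and set the threshold $L = e^{n(1-2\theta)}$; throw away loops of diameter exceeding, say, $e^{n(1-\theta)}$ using the fact that the $\mu$- and $m$-masses of such large loops decay polynomially in the diameter (a standard estimate for loop measures), which makes this discarded part have mass $\le e^{-\theta n}$ after adjusting constants. (ii) Decompose the remaining loops according to the dyadic scale $2^j$ of their diameter, $L \le 2^j \le e^{n(1-\theta)}$, so only $O(n)$ scales are involved. (iii) On each scale, couple the random walk bridge generating a loop of that diameter with a Brownian bridge using KMT (\cite{kmt1}): a simple random walk run for $N$ steps can be coupled with a Brownian motion so that the sup-distance over $[0,N]$ is $O(\log N)$ with overwhelming probability; here $N$ is at most $e^{2n(1-\theta)}$ so $\log N = O(n)$, giving the $c_1 n$ bound on that scale (after the time-parameterization adjustment $\alpha$, which absorbs the $O(1)$ discrepancy between the discrete loop length and the continuous time length). (iv) Take a union bound over the $O(n)$ scales and over the root points (there are only polynomially many relevant roots, since large loops must lie in a ball of radius $e^{n(1-\theta)}$), arranging the per-scale failure probabilities to be $\le e^{-2\theta n}$ so that the total exceptional set $\mathcal{E}^c$ has measure $\le e^{-\theta n}$. (v) Finally, reconcile the two disintegration measures: the root-point sum for $m$ versus the $r\,dr\,d\theta$ integral for $\mu$ in \eqref{def:brownian-loop-measure}, and the length/time mismatch; these introduce only multiplicative $(1+O(\cdot))$ errors on the masses, not on the pathwise distance, so they can be handled by a coupling of the two base measures (an optimal-transport argument on the low-dimensional parameter space of roots and lengths).

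I expect step (iii)–(v), namely making the KMT coupling of the \emph{bridge} ensembles fit together consistently across scales while simultaneously matching the two \emph{disintegration measures} on root/length, to be the main obstacle: KMT is naturally a statement about unconditioned walks, so one must pass to bridges (which costs a local-CLT factor that is uniform here because all relevant loops have diameter $\ll$ their lifetime$^{1/2}$ only in a controlled regime — one actually needs to be careful that loops with $\diam \ell$ comparable to $|\ell|^{1/2}$ dominate the measure, which is exactly the good case for local CLT estimates), and one must ensure the same exceptional event $\mathcal{E}$ works simultaneously for \emph{all} loops above the threshold rather than for a single loop. The polynomial bound on the number of scales and roots is what saves the union bound, and the exponential gap between $e^{-\theta n}$ and the per-scale error is what gives room. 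I would also note that a softer route is available: rather than KMT one can use the Komlós–Major–Tusnády-free construction via Donsker plus the already-cited results of \cite{KL}, \cite{BJK} on comparison of discrete and continuous loop-measure functionals, at the cost of a weaker (but still sufficient) error term; since the lemma only asserts \emph{existence} of constants $\theta, c_1$, either approach suffices.
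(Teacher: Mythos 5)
Your proposal follows essentially the same route as the paper: the paper also disintegrates both loop measures into (root) $\times$ (time duration) $\times$ (bridge law), couples the roots ($z \leftrightarrow \dsquare_z$) and the durations ($2n \leftrightarrow [n-\tfrac12,n+\tfrac12)$) directly, and then applies a KMT-type coupling to the bridge ensembles, itself only sketching the argument and deferring to the main theorem of \cite{ltf}. The technical points you flag --- passing from KMT for unconditioned walks to bridges, and making a single exceptional event work uniformly over all loops above the diameter threshold --- are genuine, but they are also left implicit in the paper's sketch, so your outline matches the paper's proof in both structure and level of detail.
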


This result can be derived from the main theorem of  \cite{ltf}. However, let us
sketch the argument.  Another way to construct the Brownian loop measure is by
the following rooted measure.  Suppose $\omega:[0,t_\omega]\rightarrow \C$
is a loop, that is, a continuous function with $\omega(0) = t_\omega$.  We
can describe any loop $\omega$ as a triple $(z,t_\omega,\tilde \omega)$ where
$z \in \C, t_\omega > 0$ and $\tilde \omega[0,t_\omega] \rightarrow \C$ is a loop 
with $\tilde \omega(0) = \tilde \omega(t_\omega) = 0$.  The loop $\omega$ is obtained
from $(z,t_\omega,\tilde \omega)$ by translation.
We consider the measure on $(z,t_\omega,\tilde \omega)$ given by
\begin{equation}  \label{dec11.1}
    {\rm area} \times (2\pi t)^{-2} \, dt \times ({\rm bridge}_t)
    \end{equation}
where ${\rm bridge}_t$ means the probability measure associated to two-dimensional
Brownian motions  $B_t, 0 \leq s\leq t$ conditioned so that $B_0 = B_t = 0$.
The factor $(2 \pi t)^{-2}$ can be considered as $t^{-1} \, p_t(0,0)$.
where $p_t$ is the transition kernel for a two-dimensional Brownian motion.
This measure, {\em considered as a measure on unrooted loops}, is the same as the
measure
\[
\mu = \frac{1}{\pi} \int_0^{2\pi} \int_0^\infty \mubub_{r \DD}(r e^{i\theta}) r dr d\theta.
\]
The expression for $\mu$  associates to each unrooted loop the rooted loop obtained
by choosing the point farthest from the origin.  The expression \eqref{dec11.1}
chooses the root using the uniform distribution on $[0,t_\omega]$.

Similarly, a rooted random walk loop can be written as $(z,2n,l)$ where
$l$ is a loop with $\ell(0) = 0$ and $|\ell| = 2n$.  Then the measure on such triples is
\[     (\mbox{counting measure}) \times (2n)^{-1} \, \Prob\{S_{2n} = 0 \}
   \times ( \mbox{bridge}_n) .\]
   Here $S_n$ is a simple random walk starting at the origin, and $\mbox{bridge}_n$
   denotes the probability measure on $[S_0,S_1,\ldots,S_{2n}]$ conditioned
   that $  S_{2n}=0$.  Using the relation  $\Prob\{S_{2n} = 0 \}
   = (\pi n)^{-1} + O(n^{-2}) $, we can now see our coupling of the two components.
   For the first component, the root, we couple Brownian loops rooted at $\dsquare_z$
   with random walk loops rooted at $z$.   We couple Brownian loops with
   time duration  $n-\frac 12 \leq t_\ell < n  + \frac 12$ with random
   walk loops of time duration $ 2n$.  Then we use a version of the KMT
   coupling (see Theorem \ref{kmt}) of the random walk
   and Brownian loops to couple the paths.  One can then check that this coupling has the desired properties.

\subsection{KMT coupling}

We will use in a number of places the  Koml\'os, Major, and Tusn\'ady (KMT) coupling of random
 walk and Brownian motion.   For a proof of the one-dimensional case, see~\cite{kmt2} or~\cite{lawler_limic}
and  the two-dimensional case follows using a standard
trick~\cite[Theorem 7.6.1]{lawler_limic}.

\begin{theorem}\label{kmt}
There exists a coupling of planar Brownian motion $B$ and two-dimensional simple random walk $S$ with $B_0=S_0$, and a constant $c>0$ such that for every $\lambda >0$, every $n\in\R_+$, 
$$\pp \bigg(\sup_{0\leq t\leq  n \vee
T_n \vee \tau_n}|S_{2t}-B_t| > c(\lambda + 1)\log n\bigg)\leq cn^{-\lambda}.$$ 
where $T_n = \min\{t: |S_{2n} | \geq n \},
\tau_n = \min\{t: |B_t| \geq n \} $.

\end{theorem}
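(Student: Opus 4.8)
The plan is to deduce the two–dimensional statement from the classical one–dimensional KMT embedding via the orthogonal decomposition of planar simple random walk, and then to promote the deterministic time horizon supplied by the one–dimensional result to the random horizon $n \vee T_n \vee \tau_n$ using crude tail bounds for $T_n$ and $\tau_n$. Recall first that the one–dimensional KMT theorem (see \cite{kmt2} or \cite[Section~7.6]{lawler_limic}) provides, for a one–dimensional simple random walk $(\xi_k)_{k \ge 0}$, a coupling with a standard Brownian motion $(\beta_s)_{s \ge 0}$, $\beta_0 = \xi_0$, and absolute constants $c_1,c_2,c_3$ so that for all integers $N \ge 2$ and all $x \ge 0$,
\[
\pp\!\left(\max_{0 \le k \le N}|\xi_k - \beta_k| \ge c_1\log N + x\right) \le c_2\, e^{-c_3 x}.
\]
Choosing $x = (\lambda/c_3)\log N$ turns the right–hand side into $c_2\, N^{-\lambda}$ and the threshold into a quantity $\le c(\lambda+1)\log N$; extending $\xi$ to a continuous path by linear interpolation changes the left side by at most $1$ and so does not affect the conclusion.

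Next I would decompose the planar walk. Writing $S_k = (X_k,Y_k)$ and setting $U_k = X_k + Y_k$, $V_k = X_k - Y_k$, a check of the four equally likely increments of $S$ shows that $(\Delta U_k,\Delta V_k)$ is uniform on $\{\pm 1\}^2$; hence $(U_k)$ and $(V_k)$ are \emph{independent} one–dimensional simple random walks. Apply the one–dimensional result twice, independently, to couple $U$ with a standard Brownian motion $\beta^U$ and $V$ with an independent standard Brownian motion $\beta^V$, and set
\[
B_t = \tfrac12\bigl(\beta^U_{2t} + \beta^V_{2t},\ \beta^U_{2t} - \beta^V_{2t}\bigr).
\]
Since $s \mapsto \tfrac12(\beta^U_{2s}+\beta^V_{2s})$ and $s \mapsto \tfrac12(\beta^U_{2s}-\beta^V_{2s})$ are independent standard Brownian motions, $B$ is a standard planar Brownian motion with $B_0 = S_0$. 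From $2X = U+V$, $2Y = U-V$ and the triangle inequality,
\[
|S_{2t} - B_t| \le |U_{2t} - \beta^U_{2t}| + |V_{2t} - \beta^V_{2t}|,
\]
so, applying the one–dimensional estimate with horizon $2N$ to each coordinate walk and summing, $\sup_{0 \le t \le N}|S_{2t} - B_t|$ exceeds $c(\lambda+1)\log N$ with probability at most $c\,N^{-\lambda}$.

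Finally I would pass to the random horizon. Standard maximal inequalities (the reflection principle for $B$, and reflection or Azuma–Hoeffding for $S$) give constants $c_4,c_5$ with $\pp(T_n > Kn^2) \le c_4 e^{-c_5 K}$ and $\pp(\tau_n > Kn^2) \le c_4 e^{-c_5 K}$ for all $K \ge 1$. Taking $K = K_n$ of order $(\lambda+1)\log n$ makes both probabilities $\le n^{-\lambda}$, so off an event of probability $\le 2n^{-\lambda}$ one has $n \vee T_n \vee \tau_n \le N$ with $N \le c_6(\lambda+1)n^2\log n$. On that event $\sup_{0 \le t \le n \vee T_n \vee \tau_n}|S_{2t}-B_t| \le \sup_{0 \le t \le N}|S_{2t}-B_t|$, which by the previous paragraph exceeds $c(\lambda+1)\log N$ with probability $\le c\,N^{-\lambda}$. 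Since $\log N = O(\log n)$ in the relevant range of $\lambda$ (the $\log\log n$ and $\log(\lambda+1)$ contributions being absorbed after enlarging $c$) and $N^{-\lambda} \le n^{-\lambda}$, combining the two bad events and adjusting the constant $c$ yields the stated inequality.

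The only genuinely deep input — the quantile/Tusnády construction that produces the logarithmic coupling error in one dimension — lies entirely inside the cited one–dimensional KMT theorem and is not reproved here; the remaining work is the rotation trick (routine) and the elementary tail bounds converting the deterministic horizon into the random one $n \vee T_n \vee \tau_n$. Consequently I expect no serious obstacle, only the minor bookkeeping of keeping the $\lambda$–dependence linear as it passes through the logarithm.
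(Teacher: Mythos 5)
Your proposal is correct and follows exactly the route the paper itself indicates: the paper gives no proof of Theorem~\ref{kmt} beyond citing the one-dimensional KMT theorem and the ``standard trick'' of \cite[Theorem 7.6.1]{lawler_limic}, which is precisely your diagonal decomposition $U=X+Y$, $V=X-Y$ into independent one-dimensional walks, coupled coordinatewise. Your additional bookkeeping for the random horizon $n\vee T_n\vee\tau_n$ via exponential tails of the exit times is the right way to fill in what the paper leaves implicit (just note that absorbing the $\log(\lambda+1)$ term, and the Brownian oscillation between lattice times, into $c(\lambda+1)\log n$ is routine but should be said).
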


\section{The combinatorial identity: proof of Theorem~\ref{thm:det-formula}} \label{sect:det}
This section states and proves Theorem~\ref{thm:det-formula2} which is a more general version of Theorem~\ref{thm:det-formula}. For the statement of the theorem some more notation is needed. 

Fix a discrete domain $A\in \dsets$. Recall the definition of $J,Q$ and $q$ from Section~\ref{sect:key} and that our branch cut is $\beta = f_A^{-1}([0,1])$ which runs from $w_0$ to $\p D_A$ in $D_A$. We will assume that $r_A$ is sufficiently large so that $[0,1] \cap \beta = \emptyset$. This is possible, since $f_A'(w_0)>0$.

Let
\[
\lambda=[x_0, \ldots, x_k] \subset A\] be a self-avoiding walk (SAW) containing the ordered edge $[0,1]$  with $\dist(0,\partial D) \ge 2 \diam \lambda$. Given $\lambda$ write
\[
\lambda^R = [x_k, \ldots, x_0] 
\]
for its time-reversal. Note that $\lambda^R$ contains the ordered edge $[1,0]$. For given $a=[a_-,a_+], b=[b_-,b_+] \in \partial_e A$ we make the following definitions.
\begin{itemize} \setlength{\itemsep}{5pt}
\item{Let $\saws(\lambda)^+=\saws^+(a, b; \lambda, A)$ be the set of SAWs from $a$ to $b$ in $A$ that contain the walk $\lambda$. That is, $\saws(\lambda)^+$ consists of walks $\eta \in \saws ^+$ that can be written as $\eta^1 \oplus \lambda \oplus \eta^2$, where $\eta^1, \eta^2$ are SAWs connecting $a$ with $x_0$ and $x_k$ with $b$, respectively.}
\item{Let $\saws(\lambda)^-=\saws^+(a, b; \lambda^R, A)$ be the set of SAWs from $a$ to $b$ in $A$ that contain the reversal of
  $\lambda$.}
\item{Let $\saws(\lambda)=\saws(a,b;\lambda,A) = \saws(\lambda)^+ \cup \saws(\lambda)^-$.}
\end{itemize}
We will sometimes suppress the dependence on $\lambda$ and write just $\saws^+, \saws^-$, and $\saws$ for $\saws^+(\lambda), \saws^-(\lambda)$, and $\saws(\lambda)$; this should not cause confusion. For topological reasons (see \cite{lawler_lerw_prob} for a detailed argument), every
self-avoiding path $\eta$ from $a$ to $b$ traversing the ordered
edge $[0,1]$ yields the same value of $Q(\eta)$.  Moreover, if $\eta'$
is another SAW from $a$ to $b$ traversing $[1,0]$, then
$Q(\eta') = - Q(\eta)$. 
Indeed, consider $\zeta$ to be any boundary arc connecting $a$ to $b$. Then one of the loops $\eta \oplus \zeta$ and $\eta' \oplus \zeta$ winds around $w_0$ exactly once and the other does not, so $Q(\eta \oplus \zeta)+Q(\eta' \oplus \zeta)=0$, implying (see \eqref{5-31}) that $Q(\zeta)(Q(\eta)+Q(\eta'))=0$.
%To see this, one may consider the winding number (with respect to $w_0$) of $\eta$ in union with a fixed boundary arc connecting $a$ with $b$.  
  Without loss of generality, we
  will assume that $a, b$ are labelled in such a way that
\[
\eta \in \saws(\lambda)^+ \Longrightarrow Q(\eta) = +1 ; \quad \eta \in \saws(\lambda)^- \Longrightarrow Q(\eta) = -1.
\]

Recall that $\mathcal{J}_A$ is the set of unrooted random walk loops in $A$ with odd winding number about $w_0$.

Set $K=A \setminus \lambda$ and define 
\begin{align*}
\Delta_{K}\left(x_0 \to a, x_k \to b\right) & = H_{\p K}(x_0, a)H_{\p K}(x_k, b) - H_{\p K}(x_0, b)H_{\p K}(x_k, a)\\
\Delta^q_{K}\left(x_0 \to a, x_k \to b\right) & = H_{\p K}^q(x_0, a)H_{\p K}^q(x_k, b) - H_{\p K}^q(x_0, b)H_{\p K}^q(x_k, a),
\end{align*}
where 
\[
H_{\p K}(x, a) = \sum_{\substack{\rwpath \omega x a\\
\omega \subset K}}p(\omega), \quad H_{\p K}^q(x,a) = \sum_{\substack{\rwpath \omega x a \\ \omega \subset K}}q(\omega),
\]
are the boundary Poisson kernels with the sums taken over walks started from the vertex $x$, taking the first step into $K$ and then exiting $K$ using the edge $a \in \p_e A$. Notice that $\Delta_K, \Delta^q_K$ can be written as determinants.

\begin{theorem}\label{thm:det-formula2}
Under the assumptions above,
$$\sum_{\eta \in  \saws} \hat{p}(\eta) = p(\lambda) e^{2m[\mathcal{J}_A]}F^q(\lambda; A)\left|\Delta^q_K\left(x_0 \to a, x_k \to b\right)\right|,
$$
where $\saws = \saws(\lambda)$.
In fact, 
\begin{multline*}
\sum_{\eta \in \saws^{+}}
\hat{p}(\eta) = \frac{p(\lambda)}{2} \Big[ e^{2m[\mathcal{J}_A]}F^q(\lambda; A)|\Delta^q_K\left(x_0 \to a, x_k \to b\right)| \\ + F(\lambda;A)\Delta_K\left(x_0 \to a, x_k \to b\right)\Big]
\end{multline*}
and
\begin{multline*}
\sum_{\eta \in \saws^{-}} \hat{p}(\eta) = \frac{p(\lambda)}{2} \Big[ e^{2m[\mathcal{J}_A]}F^q(\lambda; A)|\Delta^q_K\left(x_0 \to a, x_k \to b\right)|  
\\- F(\lambda;A)\Delta_K\left(x_0 \to a, x_k \to b\right)\Big],
\end{multline*}
where $\saws^+ = \saws^+(\lambda)$ and $\saws^-=\saws^-(\lambda)$.
\end{theorem}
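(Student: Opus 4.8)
The plan is to split the sum over $\eta \in \saws(\lambda)$ according to whether $\eta$ traverses $\lambda$ in the forward or reversed direction and to establish the two refined identities; the first displayed equation then follows by adding them, since $e^{2m[\mathcal{J}_A]}F^q(\lambda;A)|\Delta^q_K| \ge F(\lambda;A)|\Delta_K|$ up to sign and the sum of the two nonnegative quantities $\sum_{\saws^+}$ and $\sum_{\saws^-}$ is exactly the doubled leading term. So the real content is to prove the two multiline formulas. I would handle the ``$F(\lambda;A)\Delta_K$'' piece and the ``$e^{2m[\mathcal{J}_A]}F^q(\lambda;A)|\Delta^q_K|$'' piece by the same mechanism but with the two different weights $p$ and $q$, and then combine.

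First I would unpack $\sum_{\eta \in \saws(\lambda)^+}\hat p(\eta)$. By the definition \eqref{def:loop-erased-q-measure} (with $p$ in place of $q$), $\hat p(\eta;A) = p(\eta)F(\eta;A)$, and for $\eta \in \saws(\lambda)^+$ we may write $\eta = \eta^1 \oplus \lambda \oplus \eta^2$ with $\eta^1,\eta^2$ SAWs in $K = A\setminus\lambda$ connecting $a$ to $x_0$ and $x_k$ to $b$ respectively. Using multiplicativity $p(\eta) = p(\eta^1)p(\lambda)p(\eta^2)$ and the loop-measure factorization \eqref{lem:FVA} to peel $F(\eta;A)$ into $F(\lambda;A)\cdot F(\eta^1\cup\eta^2; K) $ and then into loop-erased factors on $K$, I would recognize $\sum_{\eta^1,\eta^2}$ as producing a product of two loop-erased boundary quantities in $K$. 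This is where Fomin's identity (the Lemma of Section~\ref{fominsection}) enters: the constraint that the two SAWs $\eta^1$ and $\eta^2$ (together with $\lambda$) form a single self-avoiding path forces a non-crossing condition, and \eqref{eq:fomin} converts the loop-erased, non-crossing double sum into the determinant $\Delta_K(x_0 \to a, x_k \to b)$ of plain random-walk Poisson kernels (after accounting for the $F(\eta^i;K)$ factors via Lemma~\ref{lem:greens-function-loop-measure}). The same computation run with the signed weight $q$ instead of $p$ gives $F^q(\lambda;A)|\Delta^q_K|$, except that now the extra global sign $Q$ along a full SAW from $a$ to $b$ contributes: a forward SAW has $Q=+1$ and a reversed one $Q=-1$ by the topological discussion preceding the theorem, and the loops in $A$ that are erased but are \emph{not} contained in $K$ — equivalently loops linking $w_0$ — contribute the factor $e^{2m[\mathcal{J}_A]}$ through the identity $F^q(\{0,1\};A)\cdots$ type bookkeeping, i.e. $F(\lambda;A)/F^q(\lambda;A)$-type corrections get absorbed into $\exp\{2m[\mathcal{J}_A]\}$ because $m^q = m - 2m\!\restriction_{\mathcal{J}}$. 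I would make this last bookkeeping precise by writing $Q(\ell) = -1$ iff $\ell \in \mathcal{J}_A$, so $q(\ell) = p(\ell)(1 - 2\cdot 1\{\ell \in \mathcal{J}_A\})$ and hence $m^q[\mathcal L(\cdot)] = m[\mathcal L(\cdot)] - 2 m[\mathcal{J}_A \cap \mathcal L(\cdot)]$, and track which loop classes appear.

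The decisive step is combining the $p$-computation and the $q$-computation. Writing $\sum_{\saws^+}\hat p = \tfrac12(\Sigma + \Delta\text{-difference})$ is natural because $\hat p(\eta) = \tfrac12[\hat p(\eta) + \hat p(\eta)]$ and one inserts $1 = \tfrac12(1 + Q(\eta)) + \tfrac12(1 - Q(\eta))$: the term with $Q(\eta)$ reproduces the signed ($q$-weighted) sum restricted to $\saws^+$, which by the $q$-computation is $\tfrac12 p(\lambda)e^{2m[\mathcal{J}_A]}F^q(\lambda;A)|\Delta^q_K|$ (with the sign chosen so $Q=+1$ on $\saws^+$), while the term without $Q$ gives $\tfrac12 p(\lambda)F(\lambda;A)\Delta_K$ from the unsigned computation. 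Subtracting instead of adding handles $\saws^-$, where $Q=-1$. Adding the two then kills the $\Delta_K$ term and doubles the other, yielding $\sum_{\saws}\hat p(\eta)$.

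The main obstacle, I expect, is the careful application of Fomin's identity together with the loop-measure factorization to justify that the double sum over the two SAW arms $\eta^1,\eta^2$ in $K$, weighted by $\hat p(\eta^i;K)$ and subject to the global self-avoidance constraint, collapses exactly to the determinant $\Delta_K$ — in particular getting the non-crossing bijection and the cancellation of the ``wrong-pairing'' term $H_{\p K}(x_0,b)H_{\p K}(x_k,a)$ right, and checking that the loop-erasure / loop-soup identity \eqref{def:loop-erased-q-measure} is being used on the correct domain ($K$ for the arms, $A$ for the $e^{2m[\mathcal{J}_A]}$ factor) without double-counting loops that touch $\lambda$. The signed ($q$) version requires the additional care that $|\Delta^q_K|$ rather than $\Delta^q_K$ appears, which is why the normalization ``$Q=+1$ on $\saws(\lambda)^+$'' was imposed; I would verify the determinant $\Delta^q_K$ has the sign consistent with that choice. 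Everything else — multiplicativity of $p$, $q$ along concatenations, the factorization \eqref{lem:FVA}, Lemma~\ref{lem:greens-function-loop-measure} — is routine bookkeeping.
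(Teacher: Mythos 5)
Your overall architecture is the paper's: decompose $\eta\in\saws(\lambda)$ into $\lambda$ plus two arms in $K$, resum the attached loops via \eqref{def:loop-erased-q-measure} and \eqref{lem:FVA}, apply Fomin's identity once with the weight $p$ and once with the weight $q$, and recover $\sum_{\saws^\pm}\hat{p}$ by adding and subtracting. But the decisive combining step is carried out backwards, and as written it produces the negative of the correct formula for $\sum_{\saws^-}\hat{p}(\eta)$. Fomin's identity \eqref{eq:fomin} removes the non-intersection constraint only from the \emph{antisymmetrized} combination of the two pairings $(x_0\to a,\,x_k\to b)$ and $(x_k\to a,\,x_0\to b)$. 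With the unsigned weight this antisymmetrization is exactly the difference $\sum_{\saws^+}\hat{p}-\sum_{\saws^-}\hat{p}$, which is therefore the quantity equal to $p(\lambda)F(\lambda;A)\Delta_K$; your ``term without $Q$'', i.e.\ $\frac12\sum_{\saws}\hat{p}$, is the sum of the two pairings and is \emph{not} computable by unsigned Fomin. Conversely, your ``term with $Q$'' is $\frac12\sum_{\saws}Q(\eta)\hat{p}(\eta)=\frac12 e^{2m[\mathcal{J}_A]}\sum_{\saws}\hat{q}(\eta)$, which is $e^{2m[\mathcal{J}_A]}$ times the \emph{sum} $\sum_{\saws^+}\hat{q}+\sum_{\saws^-}\hat{q}$ of the two $q$-pairings --- again not a Fomin difference. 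The correct bookkeeping is the reverse: since $\hat{p}(\eta)=\pm\, e^{2m[\mathcal{J}_A]}\hat{q}(\eta)$ on $\saws^\pm$ (Lemma~\ref{lem:F-Fq} together with $p(\eta)=\pm q(\eta)$), the \emph{sum} $\sum_{\saws^+}\hat{p}+\sum_{\saws^-}\hat{p}$ equals $e^{2m[\mathcal{J}_A]}\bigl(\sum_{\saws^+}\hat{q}-\sum_{\saws^-}\hat{q}\bigr)$, and it is this $q$-difference that signed Fomin evaluates as $q(\lambda)F^q(\lambda;A)\Delta^q_K=p(\lambda)F^q(\lambda;A)|\Delta^q_K|$.

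With your swapped identification the two errors cancel in $\sum_{\saws^+}\hat{p}=\frac12[(\mathrm{sum})+(\mathrm{difference})]$, so that formula comes out right by accident, but $\sum_{\saws^-}\hat{p}=\frac12[(\mathrm{sum})-(\mathrm{difference})]$ comes out as $\frac{p(\lambda)}{2}\bigl[F(\lambda;A)\Delta_K-e^{2m[\mathcal{J}_A]}F^q(\lambda;A)|\Delta^q_K|\bigr]$, the negative of the stated identity (and generically negative, contradicting $\hat{p}\ge 0$). Relatedly, in your middle paragraph you assert that Fomin converts the $\saws^+$ double sum alone into the determinant $\Delta_K$; it does not --- the determinant appears only after subtracting the wrong-pairing double sum, which is precisely why the difference, not the sum, is the unsigned-Fomin quantity. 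Once this identification is corrected, the rest of your outline (the arm decomposition, the use of \eqref{lem:FVA} and Lemma~\ref{lem:greens-function-loop-measure}, Lemma~\ref{lem:F-Fq} for the $e^{2m[\mathcal{J}_A]}$ factor, and the positivity argument fixing the sign in $|\Delta^q_K|$) matches the paper's proof.
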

We will use this theorem only in the
 special case when $\lambda = [0,1]$ where in the notation of the introduction the theorem gives
\[
\sum_{\eta \in  \saws }\hat{p}(\eta) = \frac{1}{4}F^q([0,1]; A)\, e^{2m[\mathcal{J}_A]}\, \left|R_A(0,a)R_A(1,b) - R_A(0,b)R_A(1,a) \right|.
\]
If we divide both sides of this equation by $H_{\p A}(a,b)$ we get \eqref{de} as stated in the introduction.

Before proving Theorem~\ref{thm:det-formula2} we need a lemma. 
\begin{lemma}\label{lem:F-Fq}
Let $\rwpath \eta a b$, $a,b \in \p_e A$, be a SAW in $A$ containing the (unordered) edge $[0,1]$. Then
\[F^q(\eta;A) = F(\eta;A) \exp\{-2m[ \mathcal{J}_A ] \},\]
where $\mathcal{J}_A$ is the set of unrooted loops in $A$ with odd winding number about $w_0$.
\end{lemma}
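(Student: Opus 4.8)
The plan is to exploit the multiplicativity of $F^q$ and $F$ over a partition of the loop set (the identity \eqref{lem:FVA} and the remark following it), together with the key structural fact about $Q$ recalled in Section~\ref{sect:key}: for a loop $\ell \subset A$ one has $Q(\ell) = -1$ if and only if the winding number of $\ell$ about $w_0$ is odd, i.e. exactly when $[\ell] \in \mathcal{J}_A$. The point is to compare the loop measures $m$ and $m^q = m^Q \cdot$ restricted to the loops that meet the self-avoiding walk $\eta$, since $F^q(\eta;A)$ and $F(\eta;A)$ are defined via $m^q$ and $m$ evaluated on $\mathcal{L}(\eta;A)$.

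First I would observe that, because $\eta$ contains the edge $[0,1]$ and hence ``crosses the branch cut'' in the sense discussed in Section~\ref{sect:key}, every loop $\ell$ that meets $\eta$ either has even winding number about $w_0$ (in which case $Q(\ell) = +1$ and $q(\ell) = p(\ell)$, contributing identically to $m$ and $m^q$) or odd winding number (in which case $Q(\ell) = -1$ and $q(\ell) = -p(\ell)$). Thus on $\mathcal{L}(\eta;A)$ the measures $m^q$ and $m$ agree on loops of even winding and are negatives of each other on loops of odd winding, which are precisely the loops in $\mathcal{J}_A$. Taking $\log F^q(\eta;A) - \log F(\eta;A) = m^q[\mathcal{L}(\eta;A)] - m[\mathcal{L}(\eta;A)]$, the even-winding part cancels and the odd-winding part doubles (with a sign), giving
\[
\log F^q(\eta;A) - \log F(\eta;A) = -2\, m\bigl[\mathcal{L}(\eta;A) \cap \mathcal{J}_A\bigr].
\]
So the lemma reduces to the claim $m[\mathcal{L}(\eta;A)\cap \mathcal{J}_A] = m[\mathcal{J}_A]$; equivalently, that \emph{every} loop in $A$ with odd winding number about $w_0$ must intersect $\eta$.

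This last claim is the crux and the main obstacle is purely topological. The idea is that $\eta$ together with a boundary arc $\zeta$ of $\p D_A$ joining $a$ to $b$ forms a (closed) curve in $\overline{D_A}$; since $\eta$ traverses $[0,1]$, this curve separates $w_0$ from $\p D_A$ in the appropriate sense — more precisely, any loop winding an odd number of times around $w_0$ while staying in $A$ must cross $\eta$, because $\eta$ contains an edge crossing the branch cut $\beta = f_A^{-1}([0,1])$ and $\eta \cup \zeta$ ``caps off'' a region whose boundary includes the branch point $w_0$. I would make this precise by the same kind of argument already used in the excerpt to show that all $\eta \in \saws(\lambda)^+$ give $Q(\eta) = +1$: lift to the double cover branched at $w_0$, or equivalently argue directly that if a loop $\ell \subset A$ did not meet $\eta$, then $\ell$ lies in the simply connected set $A \setminus \eta$, on which $Q$ is identically trivial (one can pick a branch of the argument of $f_A$ that is continuous on $D_A \setminus \eta$ because removing $\eta$ disconnects $w_0$ from... ), forcing the winding number of $\ell$ about $w_0$ to be even, a contradiction. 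The slightly delicate point to get right is that $D_A \setminus \eta$ (or rather the relevant component together with the branch-cut complement) is simply connected and does not ``surround'' $w_0$; this follows since $\eta$ is a simple arc from boundary to boundary through the edge $[0,1]$ adjacent to $w_0$, so it, together with either boundary subarc $(ab)$, bounds a Jordan subdomain, and exactly one of the two choices of boundary subarc yields a domain containing $w_0$. Once this topological separation statement is in hand, the measure identity $m[\mathcal{L}(\eta;A)\cap\mathcal{J}_A] = m[\mathcal{J}_A]$ is immediate and the lemma follows. I would cite \cite{lawler_lerw_prob} for the detailed topological bookkeeping, as the excerpt already does for the closely related statement about $Q(\eta)$.
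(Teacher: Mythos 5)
Your proposal is correct and follows essentially the same route as the paper: split $m^q$ on the loops meeting $\eta$ by winding-number parity so that the even part cancels against $m$ and the odd part doubles with a minus sign, then reduce to the topological fact that every loop of odd winding number about $w_0$ separates $w_0$ from $\p A$ and hence meets any boundary-to-boundary SAW containing $[0,1]$ --- exactly the two steps in the paper's proof. The only caveat is that your parenthetical justification via a continuous branch of $\arg f_A$ on all of $D_A\setminus\eta$ does not quite work on the component containing $w_0$; the clean argument is the separation one you also gesture at (an odd-winding lattice loop must enclose the face of $w_0$, hence enclose the vertices $0,1\in\eta$, so $\eta$ must cross it to reach $\p A$), which is how the paper phrases it.
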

\begin{proof}
For a random walk loop $\ell$, let $\operatorname{wind}(\ell)$ denote its winding number about $w_0$.
Then the definition of $q$ implies that 
\begin{align*}
m^q\left( \left\{ \ell \subset A : \, \ell \cap \eta \neq \emptyset  \right\} \right)= & \, m\left(  \left\{ \ell  \subset A : \, \ell \cap \eta \neq \emptyset,\,  \operatorname{wind}(\ell) \text{ even} \right\} \right) \\
& - m\left( \left\{ \ell \subset A : \, \ell \cap \eta \neq \emptyset,\,  \operatorname{wind}(\ell) \text{ odd} \right\}\right) \\
= & \, m\left(  \left\{ \ell  \subset A : \, \ell \cap \eta \neq \emptyset \right\} \right) \\
& - 2m\left( \left\{ \ell \subset A : \, \ell \cap \eta \neq \emptyset,\,  \operatorname{wind}(\ell) \text{ odd} \right\} \right) .
\end{align*}
But any loop with odd winding number must separate $w_0$ from $\partial A$, and so intersect every SAW $\rwpath \eta a  b$ containing $[0,1]$. This implies that
\[
m^q\left( \left\{ \ell \subset A : \, \ell \cap \eta \neq \emptyset \right\} \right)= m\left( \left\{ \ell \subset A : \, \ell \cap \eta \neq \emptyset \right\} \right) 
- 2 m\left( \left\{ \ell \subset A : \, \operatorname{wind}(\ell) \text{ odd}  \right\} \right).
\]
By exponentiating both sides we get the lemma.  
\end{proof}
\begin{proof}[of Theorem~\ref{thm:det-formula2}.]
Fix $\lambda$ as in the statement for the rest of the proof. We write $\mathcal{W},\mathcal{W}^{\pm}$ for $\saws(\lambda),\saws^{\pm}(\lambda)$. The idea is to write the sums $\sum_{\eta \in \mathcal{W}^+} \hat{p}(\eta)$ and $\sum_{\eta \in \mathcal{W}^-} \hat{p}(\eta)$ in terms of both random walk and $q$-random walk quantities via the formulas (see \eqref{def:loop-erased-q-measure} and Lemma~\ref{lem:F-Fq}) \[\hat{p}(\eta)=p(\eta)F(\eta;A), \quad F(\eta;A)=e^{2m(\mathcal{J}_A)}F^q(\eta;A),\] and the facts that $p(\eta)=\pm q(\eta), \eta \in \mathcal{W}^{\pm}$. After resummation, when we add and subtract the resulting expressions, a
determinant
  identity due to Fomin   (see Section \ref{fominsection}) can be used to write the expressions in terms of random walk determinants $\Delta_{A \setminus \lambda}$ and $\Delta^q_{A \setminus \lambda}$ that do not involve loop-erased walk quantities.

Now we turn to the details. 
Write $K = A \setminus \lambda,
\Delta = \Delta_K, \Delta^q = \Delta_K^q.$
First observe that any $\eta \in \mathcal{W}^+$ can be written as
\[
\eta = (\eta^1)^R \oplus \lambda \oplus \eta^2,
\]
where $\eta^1, \eta^2$ are nonintersecting SAWs  in $K $ connecting $x_0$ with $a$ and $x_k$ with $b$, respectively. Note that any loop intersecting $\eta$ either intersects $\lambda \subset \eta$ or it does not. Consequently,  by \eqref{lem:FVA} and \eqref{def:loop-erased-q-measure}, we can write \[\hat{p}(\eta) = p(\eta)F(\eta; A) = p(\eta)F(\lambda;A) F(\eta; K).\] Using this and the above decomposition, we see that
\begin{align}\label{eq:p-hat-1}
\sum_{\eta \in \mathcal{W}^+} \hat{p}(\eta) & = \sum_{\eta \in \mathcal{W}^+}p(\eta) F(\eta; A) \nonumber\\
& =  p(\lambda)F(\lambda;A)\sum_{\substack{{\rwpath {\eta^1} {x_0} a} \\ {\rwpath {\eta^2} {x_k} b} \\ \eta^1 \cap \eta^2 = \emptyset}}p(\eta^1)p(\eta^2)F(\eta^1 \cup \eta^2; K), 
\end{align}
where the sum is over all pairs of   nonintersecting SAWs  $\rwpath {\eta^1} {x_0} a$ and $\rwpath {\eta^2} {x_k} b$ in $K $. Similarly, any $\eta \in \mathcal{W}^-$ can be decomposed
\[
\eta = (\eta^2)^R \oplus (\lambda)^R \oplus \eta^1,
\]
where $\rwpath {\eta^2} {x_k} a$ and $\rwpath {\eta^1} {x_0} b$  are nonintersecting SAWs  in $K$. We see that
\begin{equation}\label{eq:p-hat-2}
\sum_{\eta \in \mathcal{W}^-} \hat{p}(\eta) = p(\lambda)F(\lambda;A)\sum_{\substack{\rwpath {\eta^1} {x_0} b \\ \rwpath{\eta^2} {x_k} a  \\ \eta^1 \cap \eta^2 = \emptyset} }p(\eta^1)p(\eta^2)F(\eta^1 \cup \eta^2; K).
\end{equation}
(We are only summing over paths in $K$.)
Let us now consider the sum on the right-hand side of \eqref{eq:p-hat-1}. Then using \eqref{lem:FVA} we have
\begin{align*}
\sum_{\substack{\rwpath {\eta^1} {x_0} a \\ \rwpath {\eta^2} {x_k} b \\ \eta^1 \cap \eta^2 = \emptyset}}p(\eta^1)p(\eta^2)F(\eta^1 \cup \eta^2; K) & = \sum_{\substack{\rwpath {\eta^1} {x_0} a \\ \rwpath {\eta^2} {x_k} b \\ \eta^1 \cap \eta^2 = \emptyset }}p(\eta^1)F(\eta_1; K ) p(\eta^2) F(\eta_2; K \setminus \eta_1) \\
& =\sum_{\rwpath {\omega^1} {x_0} a }  \sum_{\substack{\rwpath {\omega^2} {x_k} b  \\ \omega^2 \cap \operatorname{LE}[\omega^1] = \emptyset}}p(\omega^1)p(\omega^2),
\end{align*}
where $\omega^1: x_0 \to a$ and $\omega^2: x_k \to b$ are SAWs in $K  $.
An identical argument proves the corresponding identity (interchanging $x_0$ and $x_k$) starting from the sum in the right-hand side of \eqref{eq:p-hat-2}. If we take the difference of the two expressions,
Fomin's identity implies that we may drop the non-intersection condition: 
\begin{align*}
\sum_{\rwpath {\omega^1} {x_0} a }  \sum_{\substack{\rwpath {\omega^2} {x_k} b  \\ \omega^2 \cap \operatorname{LE}[\omega^1] = \emptyset}}& p(\omega^1)p(\omega^2) - \sum_{\rwpath {\omega^1} {x_k} a }  \sum_{\substack{\rwpath {\omega^2} {x_0} b  \\ \omega^2 \cap \operatorname{LE}[\omega^1] = \emptyset}}p(\omega^1)p(\omega^2) \\
&= \sum_{\substack{\rwpath {\omega^1} {x_0} a  \\ \rwpath{\omega^2} {x_k} b }}p(\omega^1)p(\omega^2) -  \sum_{\substack{\rwpath{\omega^1} {x_k} a  \\ \rwpath{\omega^2} {x_0}  b }}p(\omega^1)p(\omega^2) \\
& = H_{\p K}(x_0, a)H_{\p K}(x_k, b) - H_{\p K}(x_k, a)H_{\p K}(x_0, b)\\
& = \Delta \left(x_0 \to a, x_k \to b\right).
\end{align*}
(Again, we are only considering paths in $K$.) 
In other words, subtracting \eqref{eq:p-hat-2} from \eqref{eq:p-hat-1} gives
\begin{equation}\label{eq:plusminus-difference}
\sum_{\eta \in \mathcal{W}^+}\hat{p}(\eta) - \sum_{\eta \in \mathcal{W}^-}\hat{p}(\eta) = p(\lambda)\, F(\lambda;A)\, \Delta  \left(x_0 \to a, x_k \to b\right)
\end{equation}
and the right-hand side involves only random walk quantities with no non-intersection conditions.
Up to now we have not used the signed weights. The idea is to express the sum $\sum_{\eta \in \mathcal{W}^+}\hat{p}(\eta) + \sum_{\eta \in \mathcal{W}^-}\hat{p}(\eta)$ as a difference involving $\hat{q}$ to which we can apply the Fomin argument.

We first claim that
\begin{equation}\label{eq:claim1}
\sum_{\eta \in \mathcal{W}^+}\hat{p}(\eta)=\Gamma_A\sum_{\eta \in \mathcal{W}^+} \hat{q}(\eta), \quad \mbox{ where }  \Gamma_A =  \exp\{2m[ \mathcal{J}_A ] \}.
\end{equation}
To see this, recall that $\hat{q}(\eta) = q(\eta) F^q(\eta; A)$.  
We already noted that $q(\eta) = p(\eta)$ for $\eta \in \mathcal{W}^+$, so Lemma~\ref{lem:F-Fq} gives \eqref{eq:claim1}. Using that $p(\eta) = -q(\eta)$ for $\eta \in \mathcal{W}^-$, a similar argument shows that
\begin{equation}\label{eq:claim2}
\sum_{\eta \in \mathcal{W}^-}\hat{p}(\eta)=-\Gamma_A\sum_{\eta \in \mathcal{W}^-} \hat{q}(\eta).
\end{equation}
Hence, adding \eqref{eq:claim1} and \eqref{eq:claim2} gives 
\[
\sum_{\eta \in \mathcal{W}^+}\hat{p}(\eta)+\sum_{\eta \in \mathcal{W}^-}\hat{p}(\eta)=\Gamma_A\left(\sum_{\eta \in \mathcal{W}^+}\hat{q}(\eta)-\sum_{\eta \in \mathcal{W}^-} \hat{q}(\eta) \right).
\]
We can now argue exactly as in the proof of \eqref{eq:plusminus-difference} replacing $p$ by $q$; it makes no difference, and in this way we get
\begin{align}\label{eq:plusminus-sum}
\sum_{\eta \in \mathcal{W}^+}\hat{p}(\eta)+\sum_{\eta \in \mathcal{W}^-}\hat{p}(\eta)& =q(\lambda)\Gamma_A\, F^q(\lambda;A)\, \Delta ^q(x_0 \to a, x_k \to b) \\ 
& = p(\lambda)\, \Gamma_A \, F^q(\lambda;A)\, \left| \Delta ^q(x_0 \to a, x_k \to b) \right|\nonumber, 
\end{align}
where the last step uses that the left-hand side of \eqref{eq:plusminus-sum} is positive and that $|q(\lambda)| = p(\lambda)$. The theorem follows by adding and subtracting \eqref{eq:plusminus-difference} and \eqref{eq:plusminus-sum}. \end{proof}
\section{Comparison of loop measures: proof of Theorem~\ref{thm:main-thm-loop-measure}}\label{sect:loops}
In this section we prove the main estimate on the random walk loop measure by comparing it with the corresponding quantity for the Brownian loop measure.

We recall some notation. Given $A \in \mathcal{A}$, $\mathcal{J}_A$ is the set of unrooted random walk loops in $A$ with odd winding number about $w_0$. Given a simply connected domain $D \ni 0$ we write $\tilde{\J}_D$ for the set of unrooted Brownian loops with odd winding number about $0$. Let $\psi_D : \DD \to D$ be the conformal map with $\psi_D(0)=0, \psi'_D(0) > 0$ and $\psi'(0)$ is the conformal radius of $D$ from $0$. Given a lattice domain $A \subset \mathcal{A}$ with corresponding $D=D_A$ we define $r_A=r_{D_{A}}$.  
For $R > 0$, set \[\ball(R) = \{z \in \C: |z| < R \}, \quad B(R) = \{z \in \ZZ^2 : |z| < R\}.\]

 \begin{theorem}\label{thm:odd-winding-number}
There  exist $0<u, c_0,c < \infty$ such that the following holds.
Let $A \in \mathcal{A}$   and let $D_A$ be the associated simply connected domain. 
Then
\[
\left|m \left(\J_A\right) - \frac{1}{8}\log r_A - c_0 \right| \le c 
\, r_A^{-u} .
\]
\end{theorem}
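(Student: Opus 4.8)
The plan is to isolate the logarithmically divergent part of $m(\J_A)$ and match it, scale by scale, with a single explicitly computable Brownian loop quantity. Since a loop with odd winding number about $w_0$ must surround $w_0$, a loop in $\J_A$ of diameter $d$ lies in $\ball(w_0,2d)$; and $\dist(w_0,\p D_A)\gtrsim r_A$ by Koebe's theorem, so loops in $\J_A$ of diameter $\le r_A^{1-\epsilon_0}$ are supported well inside $A$. Fixing a small $\epsilon_0>0$, I would write
\[
m(\J_A)= m\big(\{\ell\in\J_A:\diam\ell\le r_A^{1-\epsilon_0}\}\big)+ m\big(\{\ell\in\J_A:\diam\ell> r_A^{1-\epsilon_0}\}\big).
\]
For the first term, decompose dyadically ($\diam\ell\in[2^k,2^{k+1})$) and, on each scale, couple simple random walk loops with Brownian loops by the rooted-loop coupling used in the proof of Lemma~\ref{lem:coupling} (i.e. the KMT coupling of Theorem~\ref{kmt} at scale $2^k$): outside an event of loop-measure $O(2^{-\theta k})$, paired loops agree up to sup-distance $O(k)$, so the scale-$k$ contribution equals the corresponding Brownian one up to controllable error, and by the \emph{exact} scale invariance of $\mu$ the Brownian scale-$k$ contribution equals a single constant $\beta:=\mu(\{\omega:\operatorname{wind}_{w_0}\omega\text{ odd},\ \diam\omega\in[1,2)\})$. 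Summing over $1\le 2^k\le r_A^{1-\epsilon_0}$ and bounding the tails should give $\tfrac{\beta}{\log 2}\log r_A + c_\flat + O(r_A^{-u})$. For the second term I would apply the coupling of Lemma~\ref{lem:coupling} with $e^n\asymp r_A$ to compare with $\mu(\{\omega\subset D_A:\operatorname{wind}_{w_0}\omega\text{ odd},\ \diam\omega> r_A^{1-\epsilon_0}\})$, and evaluate this Brownian quantity using the conformal invariance of $\mu$ together with the fact that winding numbers are preserved by homeomorphisms: pushing forward by $f_A:D_A\to\DD$, which is almost linear near $w_0$ with derivative $r_A^{-1}[1+O(\cdot/r_A)]$ by Koebe distortion, turns the (non-conformal) cutoff $\diam\ge r_A^{1-\epsilon_0}$ into $\diam\ge r_A^{-\epsilon_0}[1+o(1)]$ about $0$ in $\DD$ for the relevant loops, so this term equals $\tfrac{\beta}{\log2}\,\epsilon_0\log r_A + c_\sharp + O(r_A^{-u})$; the macroscopic loops of diameter $\asymp r_A$ have bounded $\mu$-measure and contribute only to the constant, via the conformally covariant regularized quantity $\lim_{t\to0}[\mu(\{\omega\subset D_A:\operatorname{wind}_{w_0}\omega\text{ odd},\diam\omega\ge t\})-\tfrac18\log\tfrac1t]=\tfrac18\log r_A+\Theta(\DD,0)$. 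Adding the two pieces yields $m(\J_A)=\tfrac{\beta}{\log2}\log r_A+c_0+O(r_A^{-u})$.

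It remains to identify $\beta=\tfrac{\log2}{8}$ --- equivalently, that the $\mu$-measure of loops with odd winding number about an interior point is $\tfrac18$ per unit of logarithmic scale. This is the one genuinely explicit computation, and I would carry it out using the conformal invariance of $\mu$ together with a direct evaluation of the Brownian loop measure of non-contractible loops in a round annulus, refined by the parity of the winding number --- equivalently, a computation with the Brownian bubble measure $\mubub$ of Section~\ref{sect:preliminaries} on the branched double cover of the punctured disk. (The value $\tfrac18$ is consistent with the $r_A^{1/4}$ in Theorem~\ref{thm:main-thm-loop-measure} and hence with the exponent $3/4$ of the main theorem.) Because $\mu$ is conformally invariant and winding numbers are homeomorphism invariants, the same density governs odd-winding loops about $0$ in $\DD$.

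The main obstacle is making the random-walk-to-Brownian comparison precise enough to give the power-law error $O(r_A^{-u})$ rather than merely $o(\log r_A)$: the defining event of $\J_A$ is \emph{unstable} under the unavoidable $O(\log r_A)$ spatial error in the coupling, precisely for (i) loops passing within $O(\log r_A)$ of the branch point $w_0$, whose odd-winding status can flip, and (ii) loops passing within $O(\log r_A)$ of the lattice boundary $\p D_A$, where ``$\subset A$'' and ``$\subset D_A$'' may disagree. These families must be treated separately from the crude scale-by-scale coupling (which on its own only pins down the leading $\tfrac18\log r_A$ term): one isolates the loops entering a small ball $\ball(w_0,\rho_0)$ or a thin collar of $\p D_A$, bounds the (Brownian, hence also the coupled) measure of such loops by Beurling-type and boundary estimates, and handles the innermost scales around the dual vertex $w_0$ by a direct lattice analysis, so that the residual discrepancies are summable to $O(r_A^{-u})$. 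Controlling these unstable loops uniformly over all $A\in\dsets$ is, I expect, the technical heart of the proof.
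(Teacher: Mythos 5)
Your overall architecture matches the paper's: an exact $\tfrac18\log$ per multiplicative scale coming from the scale invariance of $\mu$ (the paper imports the constant $1/8$ from Proposition~4.1 of \cite{lawler_lerw_prob}, which is essentially the annulus/bubble computation you sketch), a scale-by-scale KMT-type coupling of the two loop measures, and Beurling-type estimates for the loops whose membership in the relevant events is unstable near $\p D_A$ and near the cutoff circles. The genuine gap is in your treatment of the loops that approach the branch point $w_0$. You propose to ``bound the (Brownian, hence also the coupled) measure of such loops'' and set them aside. This cannot produce a power-law error, because that measure is not small: the loop measure of loops of diameter of order $2^k$ that pass within distance $O(k)$ of $w_0$ (the size of the coupling error at that scale) is of order $1/\log(2^k/k)\asymp 1/k$ --- compare $m[\mathcal{L}(0;A)]=\log G_A(0,0)\asymp \log\log r_A$, whose increment per dyadic scale of loop diameter is $\asymp 1/k$. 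Summing over $k\le\log_2 r_A$ gives $\asymp\log\log r_A$, which diverges; even discarding only the scales beyond a fixed cutoff leaves a tail $\sum_{k>K}1/k$, which is never $O(r_A^{-u})$. A ``direct lattice analysis of the innermost scales'' does not rescue this, since the problematic loops are macroscopic loops that merely dip close to $w_0$. The paper flags exactly this point: the measure of loops close to the branch point is too large to ignore.

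The missing idea is cancellation rather than smallness. In the paper's Claim~3 of Lemma~\ref{lemma:loop-coupling} one shows that, conditionally on approaching $w_0$ (within distance $r^{1/2}$ there), a loop is equally likely to have odd or even winding number up to an error $O(r^{-u})$; this is proved by coupling the two parity classes across $\asymp\log r$ dyadic annuli, in each of which the winding parity changes with probability bounded below, and the same argument applies verbatim to the random walk loops. One may then replace the (random walk and Brownian) measures of the odd-winding loops that approach $w_0$ each by one half of the corresponding parity-free measures, and those parity-free quantities can be compared directly via the coupling because they no longer involve the unstable winding indicator. Without this symmetrization your argument only yields $m(\J_A)=\tfrac18\log r_A+O(\log\log r_A)$, which falls short of the statement of the theorem and of the $r_A^{-u}$ precision needed downstream in Theorem~\ref{maintheorem}.
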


Our proof does not determine the value of   $c_0$.
Before giving the proof we need a few lemmas.

\begin{lemma}\label{mudiff} There exists $ c  < \infty$ such that
if $D$ is a simply connected domain containing the origin with
conformal radius $r \geq 5$, then
\[    \left|
\mu\left(\tJ_{D} \setminus \tJ_{\Disk} \right) - \frac{\log r}{8} 
\right| \leq c \,  r^{-1}. 
\]
\end{lemma}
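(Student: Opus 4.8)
The plan is to reduce the statement, using conformal invariance of the Brownian loop measure $\mu$, to the case of two concentric disks, and then to evaluate that case exactly via the bubble-measure representation of $\mu$. Since $D$ has conformal radius $r \ge 5$ from $0$, the Koebe $1/4$-theorem gives $\Disk \subset \Disk(0,r/4) \subset D$, so $\tJ_\Disk \subset \tJ_D$. Let $\psi = \psi_D : \Disk \to D$ be the uniformizing map with $\psi(0)=0$, $\psi'(0)=r$, and set $\Disk' := \psi^{-1}(\Disk) \subset \Disk$. Since $\psi$ is a homeomorphism fixing $0$ it preserves winding numbers about $0$, and it carries loops contained in $\Disk'$ onto loops contained in $\Disk$ and loops contained in $\Disk$ onto loops contained in $D$; conformal invariance of $\mu$ then gives $\mu(\tJ_D \setminus \tJ_\Disk) = \mu(\tJ_\Disk \setminus \tJ_{\Disk'})$, and uniformizing $\Disk'$ (comparing with $\psi^{-1}$) shows it has conformal radius $1/r$ from $0$.

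Next I would prove the exact formula $\mu(\tJ_\Disk \setminus \tJ_{\Disk(0,\rho)}) = \tfrac18 \log(1/\rho)$ for every $0<\rho<1$. In the representation $\mu = \frac1\pi \int_0^{2\pi}\int_0^1 \mubub_{s\Disk}(se^{i\theta})\,s\,ds\,d\theta$ for loops in $\Disk$, each loop is rooted at its point farthest from $0$, whose modulus is the loop's maximal modulus; hence $\tJ_\Disk \setminus \tJ_{\Disk(0,\rho)}$, the odd-winding loops in $\Disk$ of maximal modulus in $[\rho,1)$, corresponds to roots with $s\in[\rho,1)$. The scaling rule \eqref{def:bubble-covariance} and rotational symmetry give $\mubub_{s\Disk}(se^{i\theta})[\text{odd winding about }0] = s^{-2}b_0$ with $b_0 := \mubub_\Disk(1)[\text{odd winding about }0]$, and integrating yields $2b_0\log(1/\rho)$; so it remains to show $b_0 = \tfrac1{16}$. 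For this I would lift Brownian motion through $z = e^w$, under which $\Disk\setminus\{0\}$ becomes the half-plane $\{\Re w<0\}$ and a loop of winding number $n$ about $0$ becomes a path leaving that half-plane at a boundary point with imaginary part $2\pi n$. From $\mubub_\Disk(1) = \pi\lim_{\eps\to 0}\eps^{-1}h_\Disk(1-\eps,1)\,\Prob^{1-\eps,1}$ and $h_\Disk(1-\eps,1) = \frac1{\pi\eps}(1+O(\eps))$, $b_0$ equals $\pi$ times the limit as $\eps\to0$ of $\eps^{-1}$ times the boundary density at $1$ of the event that Brownian motion from $1-\eps$ exits $\Disk$ at $1$ with odd winding about $0$; by the covering picture that density is $\sum_{n\text{ odd}} p(\log(1-\eps),2\pi i n)$, where $p(w_0,iy)=\frac1\pi\frac{-\Re w_0}{|w_0-iy|^2}$ is the Poisson kernel of $\{\Re w<0\}$. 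Inserting $w_0 = \log(1-\eps) = -\eps+O(\eps^2)$ and letting $\eps\to0$ (dominated convergence in $n$), this is $\big(\frac{\eps}{4\pi^3}\sum_{n\text{ odd}}n^{-2}\big)(1+o(1)) = \frac{\eps}{16\pi}(1+o(1))$, since $\sum_{n\text{ odd}}n^{-2}=\pi^2/4$; hence $b_0 = \tfrac1{16}$.

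To finish, I would compare $\Disk'$ with the concentric disk of radius $1/r$ using Koebe distortion; we may assume $r$ large, since for $r$ in a bounded range the bound holds after enlarging $c$ (the left side is bounded there, via the crude inclusion $\Disk(0,1/(4r))\subset\Disk'$ and the previous step). Writing $\phi(w)=\psi(w)/r$, a normalized univalent function on $\Disk$, the coefficient bounds give $\psi(w) = rw\,(1+O(|w|))$ uniformly for $|w|\le\tfrac12$, so inverting, $\psi^{-1}(\zeta)=\frac\zeta r(1+O(1/r))$ for $|\zeta|\le1$, and there is a universal constant $c_*$ with $\Disk(0,\tfrac1r(1-\tfrac{c_*}r))\subset\Disk'\subset\Disk(0,\tfrac1r(1+\tfrac{c_*}r))$. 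By monotonicity of $\mu$ and the concentric formula,
\[ \tfrac18\log\tfrac{r}{1+c_*/r} \;\le\; \mu(\tJ_\Disk\setminus\tJ_{\Disk'}) \;\le\; \tfrac18\log\tfrac{r}{1-c_*/r}, \]
and both sides equal $\tfrac18\log r + O(1/r)$; combined with the first paragraph this proves the lemma.

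The main obstacle is the exact evaluation $b_0 = \tfrac1{16}$, equivalently pinning the constant down as $\tfrac18$: one must justify the passage to the logarithmic cover on the relevant spaces of loops, replace $\log(1-\eps)$ by $-\eps$ in the limit, and control the sum over all winding numbers uniformly in $\eps$ (including verifying that the normalization of $\mubub$ fixed in the reference domain is the one used here). By comparison, the conformal-invariance reduction and the Koebe-distortion comparison are routine.
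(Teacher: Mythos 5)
Your proof is correct, and its overall architecture coincides with the paper's: reduce to a comparison of $\Disk$ with $\psi_D^{-1}(\Disk)$ by conformal invariance, evaluate the concentric-disk quantity exactly, and absorb the discrepancy between $\psi_D^{-1}(\Disk)$ and the disk of radius $1/r$ via Koebe distortion into the $O(r^{-1})$ term. The one genuine difference is how the constant $\tfrac18$ is obtained. The paper observes that $\phi(t)=\mu(\tJ_{t\Disk}\setminus\tJ_{\Disk})$ satisfies $\phi(st)=\phi(s)+\phi(t)$ by scale invariance, so $\phi(t)=\alpha\log t$, and then simply cites Proposition~4.1 of \cite{lawler_lerw_prob} for $\alpha=\tfrac18$. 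You instead compute the constant from first principles: you reduce to the bubble measure via the rooted representation at the point of maximal modulus, use the covariance rule to get $\mubub_{s\Disk}(se^{i\theta})[\mathrm{odd}]=s^{-2}b_0$, and evaluate $b_0=\tfrac1{16}$ by lifting through $z=e^w$ to the half-plane and summing the Poisson kernel over odd winding numbers (your arithmetic checks out: $\sum_{n\ \mathrm{odd}}n^{-2}=\pi^2/4$ gives $b_0=\tfrac1{16}$ and hence $2b_0\log(1/\rho)=\tfrac18\log(1/\rho)$, consistent with \eqref{sept25.1} and with the exponent $1/4$ in Theorem~\ref{thm:main-thm-loop-measure}). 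What your route buys is self-containedness — the lemma no longer depends on an external computation — at the price of the covering-space/Poisson-kernel calculation, which you have correctly identified as the delicate step and which you justify adequately (dominated convergence in $n$, the $|{(e^w)}'|=1$ Jacobian at the boundary, and the replacement $\log(1-\eps)=-\eps+O(\eps^2)$). Your separate treatment of bounded $r$ via $\Disk(0,1/(4r))\subset\Disk'$ is a sensible precaution that the paper glosses over.
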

\begin{proof} 
For $t > 1$, let $\phi(t) = \mu\left(\tJ_{t\Disk} \setminus \tJ_{\Disk} \right)$.
Note that if $1 < t < s$, then conformal invariance
of $\mu$  implies that
$ \phi(s) = \phi(t) + \phi(s/t)$,  that is, $\phi(t) = \alpha
\log t$ for some $\alpha$.   The constant $\alpha$
can be computed, see  Proposition~4.1 of \cite{lawler_lerw_prob},

\begin{equation}   \label{sept25.1}
 \mu \left(\tJ_{t\Disk} \setminus \tJ_{\Disk}\right)  =  \frac{1}{8} \log t.
 \end{equation}

Distortion
estimates imply that there is a universal $c$ such that
\[
\psi_D\left(\ball(r^{-1}-cr^{-2})\right) \subset \Disk  \subset \psi_D\left(\ball(r^{-1} +cr^{-2} )\right)
\]
Hence, by conformal invariance   and \eqref{sept25.1},
\[ \mu\left(\tJ_{D} \setminus \tJ_{\Disk} \right) 
     = \frac 18 \, \log\left[r \pm c  \right] = \frac{1}{8}\log r + O(r^{-1}). \]
 
\end{proof}

Given Lemma \ref{mudiff} and the fact that the conformal radius of $D_A$
with respect to $0$ and $w_0$ are the same up to a
multiplicative  error
of magnitude $1 + O(r_A^{-1})$, we see that to prove Theorem \ref{thm:odd-winding-number}, it suffices
to  prove that there exists $c_0$ such that 
\[   
        m(\J_A) =
         \mu\left(\tJ_{D} \setminus \tJ_{\Disk} \right) 
          + c_0 + O(r_A^{-u}) , \quad D=D_A. \]
     If we let $k$ be the largest integer such that $e^{k+1} \Disk
     \subset D$, then we can write
     \[  m(\J_A)  - 
         \mu\left(\tJ_{D} \setminus \tJ_{\Disk} \right) 
         =   \hspace{1.4 in} \]
    \[     [m(\J_A \setminus J_{B^k})
           - \mu(\tJ_D \setminus \tJ_{\ball^k}) ] +
         \sum_{j=1}^k[ m(\J_{B^{j}} 
           \setminus  \J_{B^{j-1}}) - \mu(\tJ_{\ball^{j}}
             \setminus \tJ_{\ball^{j-1}} )], \]
      where $B^j = B(e^{j}), \ball^j = \ball(e^{j}).$
      (There are no random walk loops of odd winding number which stay in $\DD$.)
       The Koebe-$1/4$ theorem implies that
  that $(\C \setminus D)  \cap \{|z| = r\}$ is nonempty; we write $r=r_D$.  Note
  that this implies that any loop in $D$ (either random walk or
  Brownian motion) with odd winding number must intersect
  $r\Disk$. 
  
The theorem then follows from the following estimate.
The phrasing of the lemma is rather technical but the basic
idea is that the measures of the set of random walk loops and Brownian loops
with odd winding number that are in $D$
%, intersect $r \Disk$,
and are not contained in a smaller disk $\delta r \Disk$
are almost the same.   

We use the coupling of random walk and
Brownian loops and note that the pairs of coupled loops will have these properties unless one of the following possibilities occur, each of which will be proven to have small measure. 
\begin{itemize}\setlength{\itemsep}{5pt}
\item   The Brownian loop and the random walk loop are not
very close. The loops are coupled in such a way that this happens with small measure.
\item  One of the loops (in a coupled pair) is contained in $D$ but the other is not.  If
the loops are close this would require the loop that
is inside $D$
to be close to the boundary without intersecting it. The measure of such loops can be controlled using Beurling-type estimates.
\item  Similarly, one loop can intersect $r \Disk$ or
be contained in $\delta r \Disk$  while the other is not.  Again,
this requires one of the loops to be near a circle without
intersecting the circle. 
\item  The final ``bad'' possibility is that the loops are close but
they are so close to the origin that the winding numbers can
  differ. The measure of walks that are close to the origin is sufficiently
large that we cannot just ignore this term.  However, if a loop
(random walk or Brownian motion) gets close to the origin it
is almost equally likely to have an odd number
 as an even winding number.  This allows us to show that
 the random walk and Brownian loop measures of such loops
 are nearly the same.
 
\end{itemize}

    \begin{lemma}\label{lemma:loop-coupling}  There exist $ u > 0, c < \infty$ such
    that the following holds for all $\delta \geq 1/10$. 
    Suppose $D$ is a simply connected domain
containing the origin and let $r = r_D$.
  Let $\mu$ denote
the Brownian loop measure and $m$ the random walk loop measure.
\begin{itemize}\setlength{\itemsep}{5pt}
\item  Let $I(\ell)$ (resp., $\tilde I(\omega)$)
 be the indicator function of the event that
a random walk loop $\ell$ (resp., a Brownian loop $\omega$)
 is a subset of $D$, intersects $r \Disk$,
  is not a subset of $\delta r \Disk$.
\item  Let $
U(\ell)$  (resp., $\tilde U(\omega)$) denote the indicator function
that the winding number of $\ell$ (resp., $\omega$)
about $w_0$ (resp., $0$) is odd.  
\end{itemize}
Then,
\[     \left|\mu[\tilde I (\omega) \, \tilde U(\omega)]-
    m[I(\ell) \, U(\ell) ] \right| \leq c \, r^{-u} . \]
    \end{lemma}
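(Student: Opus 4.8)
The plan is to use the coupling of the Brownian loop measure $\mu$ and the random walk loop measure $m$ supplied by Lemma~\ref{lem:coupling}, taking $n \asymp \log r$ so that the exceptional set $\mathcal{E}$ has measure $O(r^{-\theta})$ and, off this set, every coupled pair $(\omega,\ell)$ with diameter at least a small power of $r$ satisfies $\|\omega-\ell\| \le c\log r$. On $\mathcal{E}^c$ we compare $\tilde I(\omega)\tilde U(\omega)$ with $I(\ell)U(\ell)$ pair by pair, and we organize the bookkeeping around the four ``bad'' scenarios listed before the lemma. First I would dispose of the small loops: a loop contributing to either side must intersect $r\Disk$ and not lie in $\delta r\Disk$, hence have diameter $\gtrsim r$, so only large loops (to which the coupling applies with $\|\omega-\ell\|\le c\log r$) are relevant, up to the $O(r^{-\theta})$ contribution of $\mathcal{E}$ and of the very small number of coupled large pairs where one loop is large but its partner is not (again controlled by $\mathcal{E}$).

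Next, for a coupled pair of large, $c\log r$-close loops, I would show the symmetric difference of the events $\{I(\ell)=1\}$ and $\{\tilde I(\omega)=1\}$ forces one of the two loops to come within $O(\log r)$ of $\partial D$, of the circle $|z|=r$, or of the circle $|z|=\delta r$, without crossing it. The measure of Brownian (or random walk) loops of diameter $\gtrsim r$ that get within distance $s$ of a given boundary arc or circle but do not hit it is $O((s/r)^{1/2})$ by a Beurling-type estimate together with the standard bound $\mu(\{\text{loops through a fixed point, diam}\ge r\}) = O(1)$; with $s = O(\log r)$ this is $O(r^{-1/2}(\log r)^{1/2}) = O(r^{-u})$. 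This handles the first three bullets and reduces the comparison, up to an $O(r^{-u})$ error, to coupled pairs of large close loops that are simultaneously inside $D$, meet $r\Disk$, and avoid $\delta r\Disk$; for such pairs the only remaining discrepancy is in the winding-number factors $U(\ell)$ versus $\tilde U(\omega)$.

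The main obstacle is precisely this last point: two loops that are within $c\log r$ of each other can still have different winding numbers about the respective centers $w_0$ and $0$, and this happens with non-negligible measure because loops passing within $O(\log r)$ of the origin are abundant. The idea is that such a discrepancy requires the loop to enter the disk $\ball(C\log r)$ around the origin; but conditioned on the loop's excursions outside a slightly larger disk, the parity of the winding number contributed by the part of the loop inside that disk is, for both the random walk and the Brownian loop, asymptotically a fair coin — more precisely, I would show that for loops restricted to (or conditioned to enter) $\ball(C\log r)$ the measures of odd and even winding number differ by a factor $1+O((\log r)^{-1})$ or better, and moreover that the random walk and Brownian versions agree to the same order by another application of the KMT coupling on the part of the path away from the origin. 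Combining: the contribution of ``near-origin'' loops to $\mu[\tilde I\tilde U]$ and to $m[IU]$ are each within $O(r^{-u})$ of half the (equal, up to $O(r^{-u})$) total measure of near-origin large admissible loops, so they cancel to the required precision. Summing the four estimates gives $\big|\mu[\tilde I(\omega)\tilde U(\omega)] - m[I(\ell)U(\ell)]\big| \le c\,r^{-u}$, possibly after shrinking $u$.
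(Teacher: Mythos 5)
Your proposal follows the same architecture as the paper's proof: couple the two loop measures at resolution $\log r$, control the mismatch of the containment indicators $I,\tilde I$ via Beurling-type estimates for loops that approach $\p D$ or one of the circles without crossing, and handle the winding-number mismatch by arguing that loops coming close to the origin have nearly equidistributed winding parity. However, the quantitative rate you propose for that last, crucial step is too weak to prove the lemma. The measure of admissible loops of diameter $\gtrsim r$ that enter $\ball(C\log r)$ is of order $1/\log(r/\log r)\asymp 1/\log r$ (it is the measure of loops crossing an annulus of modulus $\asymp\log r$), so if the odd and even sub-measures agree only to within a factor $1+O(1/\log r)$, the resulting error is of order $(\log r)^{-2}$, not $r^{-u}$; and a $(\log r)^{-2}$ bound does not survive the telescoping over the $\asymp\log r_A$ dyadic scales in the proof of Theorem~\ref{thm:odd-winding-number}, where the tail of $\sum_j j^{-2}$ is only $1/\log r_A$. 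To get a polynomial rate one must use that the conditioned path crosses $\asymp\log r$ dyadic annuli between the inner scale and $\delta r$, in each of which it has a uniformly positive probability of switching winding parity and of coupling with a path of the opposite parity; iterating gives an imbalance of $(1-c)^{c'\log r}=r^{-u}$. (The paper runs this with inner threshold $r^{1/2}$ rather than $C\log r$; either works, but the iterated annulus coupling is the missing ingredient.)

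A second genuine omission: you never control loops that travel far beyond scale $r$. The lemma does not assume $D\subset Cr\DD$ — only that the conformal radius is $r$ — and the coupling of Lemma~\ref{lem:coupling} is invoked in the paper only for pairs confined to $r^{1+u}\DD$. One therefore needs the separate estimate (Claim 1 of the paper) that the loop measure of loops in $D$ meeting both $r\DD$ and the circle of radius $r^{1+u}$ is $O(r^{-u})$; this uses the Koebe $1/4$ theorem to place $\p D$ within distance $r$ of the origin and then Beurling estimates on the outward and return crossings. Two smaller points: the bound $\diam\gtrsim r$ follows from the odd-winding condition (the loop must surround $w_0$ and reach $\p(\delta r\DD)$), not from $I$ alone, so the winding factor must be carried along in error terms such as $\nu[\,|\tilde I-I|\,]$; and your justification of the near-circle estimate (an $O(1)$ per-point bound times a Beurling factor) over-counts, since a large loop meets many unit squares near the circle — the paper instead does a rooted-loop-measure computation over roots and time durations, yielding $O(r^{-1/4})$.
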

    
    Here we are writing $\mu[\cdot]$ for the integral with
    respect to $\mu$ and similarly for $m[\cdot]$ and
    $\nu[\cdot]$ in the proof below.

   \begin{proof}
   We will be doing detailed
    estimates for the random walk loop measure; the Brownian loop measure estimates are done similarly. We will, however, prove one estimate for the Brownian loop measure in order to explicitly illustrate this point. The proof of this lemma will complete the proof of Theorem~\ref{thm:odd-winding-number}.
    
  It
  will be useful to fix an enumeration  $\{z_0,z_1,\ldots\}$
  of $\Z^2$ such that $|z_j|$ is nondecreasing and we let
  $V_j = \{z_0,\ldots,z_j\}$.   In
  particular, $z_0 = 0$. 
   We will first consider  loops that do not
  lie in $r^{1+u} \Disk$ for some $u > 0$.   As already noted,
  any loop in $D$ with odd winding number must intersect
  $r \Disk$. 
  
 \begin{itemize}\setlength{\itemsep}{5pt}
 \item {\bf Claim 1:} The random walk and Brownian loop
 measures of loops that intersect both $r\Disk$ and the
 circle of radius $r^{1+u}$ is $O(r^{-u})$.
 \end{itemize}
 
    \begin{figure}[t]
  \centering
  \def\svgwidth{\columnwidth}
  
  %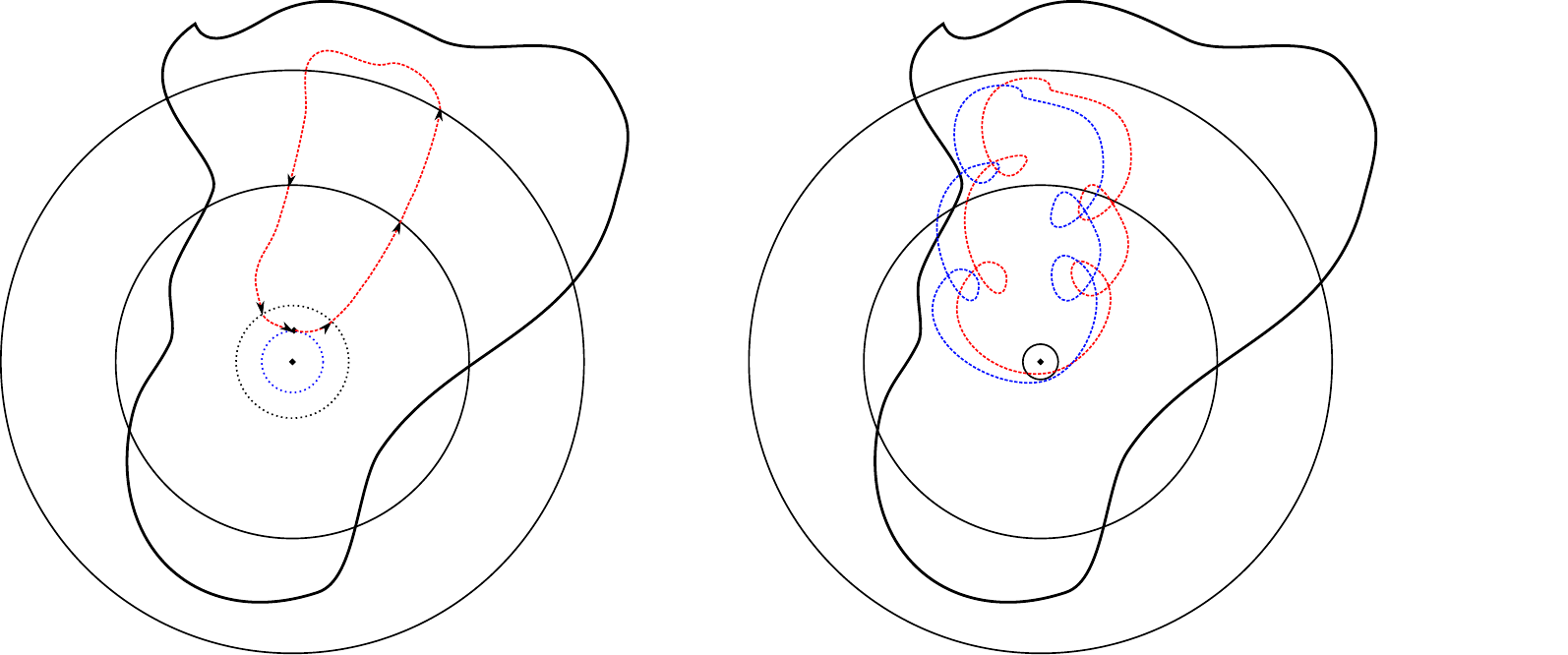
  %%%%% pdf_tex code for figure
  \begingroup%
  \makeatletter%
  \providecommand\color[2][]{%
    \errmessage{(Inkscape) Color is used for the text in Inkscape, but the package 'color.sty' is not loaded}%
    \renewcommand\color[2][]{}%
  }%
  \providecommand\transparent[1]{%
    \errmessage{(Inkscape) Transparency is used (non-zero) for the text in Inkscape, but the package 'transparent.sty' is not loaded}%
    \renewcommand\transparent[1]{}%
  }%
  \providecommand\rotatebox[2]{#2}%
  \ifx\svgwidth\undefined%
    \setlength{\unitlength}{458.99781466bp}%
    \ifx\svgscale\undefined%
      \relax%
    \else%
      \setlength{\unitlength}{\unitlength * \real{\svgscale}}%
    \fi%
  \else%
    \setlength{\unitlength}{\svgwidth}%
  \fi%
  \global\let\svgwidth\undefined%
  \global\let\svgscale\undefined%
  \makeatother%
  \begin{picture}(1,0.41737352)%
    \put(0,0){\includegraphics[width=\unitlength]{loops1.pdf}}%
    \put(0.25246737,0.076308){\color[rgb]{0,0,0}\makebox(0,0)[lb]{\smash{$r \mathbb{D}$}}}%
    \put(0.68092801,0.16017951){\color[rgb]{0,0,0}\makebox(0,0)[lb]{\smash{$\delta r \mathbb{D}$}}}%
    \put(0.7354203,0.07946977){\color[rgb]{0,0,0}\makebox(0,0)[lb]{\smash{$r \mathbb{D}$}}}%
    \put(0.29315267,0.01151761){\color[rgb]{0,0,0}\makebox(0,0)[lb]{\smash{$r^{1+u}\mathbb{D}$}}}%
    \put(0.7893083,0.01151761){\color[rgb]{0,0,0}\makebox(0,0)[lb]{\smash{$r^{1+u}\mathbb{D}$}}}%
    \put(0.33109416,0.34350539){\color[rgb]{0,0,0}\makebox(0,0)[lb]{\smash{$D$}}}%
    \put(0.82275221,0.34350539){\color[rgb]{0,0,0}\makebox(0,0)[lb]{\smash{$D$}}}%
    \put(0.18376353,0.2173314){\color[rgb]{0,0,0}\makebox(0,0)[lb]{\smash{$z_j$}}}%
  \end{picture}%
\endgroup%
%%%%

  \caption{Proof of Lemma~\ref{lemma:loop-coupling}. Left: Proof of Claim 1. If $\ell$ is a good loop starting from $z_j$, then it must first exit the disk of radius $2|z_j|$ without hitting $V_{j-1}$ (represented by the smallest disk), get to $\p r \mathbb{D}$, exit $r^{1+u} \mathbb{D}$, then get back to $r \mathbb{D}$ all while not exiting $D$, and finally return to $z_j$ avoiding $V_{j-1}$. Right: Proof of Claim 2. A random walk and Brownian loop are coupled and close, inside $r^{1+u} \mathbb{D}$ but $|I-\tilde{I}| > 0$. One of three possibilities why the latter may hold is that exactly one of the loops exits $D$. In that case the other loop must get near $\p D$ without exiting $D$. }\label{loops}
\end{figure}
 
 We will prove this for the random walk measure; the Brownian
 loop estimate is done similarly. 
Let $L$ denote the set of random walk
  loops in $D$ that intersect both $r\Disk$ and the circle
  of radius $r^{1+u}$.  Then
  \[   L = \bigcup_{|z_j| < r} L_j^* , \]
where  $L_j^*$ denotes the set of such loops $[\ell]$ such
  that $z_j \in \ell $ and $\ell \cap  V_{j-1}
   = \emptyset$.  For any
  $[\ell]  \in L_j^*$ we call a rooted representative 
  \[  \ell = [\ell_0,\ell_1,\ldots,\ell_{2n} ] \]
 good if $\ell_0 = z_j$ and if we define $k$ to be the first index with $|\ell_k|
  \geq 
 r^{1+u}$, then $\ell_s \neq z_j, 1 \leq s \leq k$.  Let $L_j^i$ denote
 the set of unrooted loops that have $i$ good representatives.  Then
 $L_j^1$ is a set of elementary loops as in the proof
   of Lemma \ref{lem:greens-function-loop-measure}.  In particular,
   \[   m(L^*_j) = - \log\left[1 - p(L_j^1) \right] \]

 For $j = 0$, a good loop must start from $0$, then reach the disk of radius $r$
 without returning to $0$.  This has probability $O(1/\log r)$.  Given
 this, the Beurling estimate implies that the probability of reaching the
 disk of radius $r^{1+u}$ without leaving $D$ is $O(r^{-u/2})$.  Given
 this, the probability to return to the disk of radius $r$ without leaving $D$
 is $O(r^{-u/2})$.  Given this, the expected number of returns to the origin
 before leaving $D$ is $O(1)$.  Combining all of these estimates, we see that
 $p(L_0^1) = O(r^{-u}/\log r)$ and hence $m(L^*_0) = O(r^{-u}/\log r).$

 Let us now consider elementary loops for $j > 0$. Let $x = |z_j|$.
   Using the gambler's
 ruin estimate, the probability that the random walk reaches
 the circle of radius $2x$ without
 hitting $V_{j-1}$ is $O(1/x)$.  Given this, the probability that the
 walk reaches the circle of radius $r$ without hitting $V_{j-1}$
  is $O(1 \wedge[\log (r/x)]^{-1})$.
 Given this, as above, the probability to reach the circle of radius $r^{1+u}$
 and return to the circle of radius $r$ without leaving $D $ is $O(r^{-u})$.
 Given this, the probability
to reach within distance $2x$ of $z_j$ is $O(1 \wedge[\log (r/x)]^{-1})$.
Given this, the probability that the next visit to $V_j$ is at $z_j$ is 
$O(x^{-1})$.
  Given this, the expected number of visits to $z_j$ before leaving
$D\setminus
 V_{j-1}$ is $O(1)$.  Combining all of these estimates, we see that 
\begin{equation}  \label{jan9.3}
 m(L_j^*)  = p(L_j^1) \, \left[1 +  O(p(L_j^1) ) \right]
  \leq   \frac{c}{|z_j|^2\,r^u} \,   \left[1 \wedge \frac{1}{\log^2(r/|z_j|)} 
   \right].
  \end{equation}
By summing over $|z_j| \leq r$, we get $m(L) = O(r^{-u})$ which
establishes Claim 1. 
 
\medskip

     We will now use the coupling as described
   in Lemma \ref{lem:coupling}.  Let us
   write $(\omega,\ell)$ for a Brownian motion/random walk
   loop pair. 
    This coupling
   defines $(\omega,\ell)$ on the same measure space
   $(M,\nu)$ such that 
   \begin{itemize}\setlength{\itemsep}{5pt}
   \item  The marginal measure on $\omega$ restricted to
   nontrivial loops is $\mu$.
   \item  The marginal measure on $\ell$ restricted to
   nontrivial loops is $m$.
   \item   Let $E$ denote the set of  $(\omega,\ell)$ such that
   at least one of the paths has diameter greater than $r^{1-u}$
   and is contained in the disk of radius $r^{1+u}$ and such
   that $\|\omega - \ell \|_\infty \geq c_0 \, \log r$.  (Here by
   $\|\cdot \|_\infty$ we mean the infimum of the supremum
   norm over all parametrizations.)
   Then $\nu(E) \leq O(r^{-u})$. 
   \end{itemize}
Given Claim 1, we see that
is suffices to 
    show that
$  |\nu   ( \tilde I 
   \, \tilde U - I \, U)|= O(r^{-u}).$
   We will write $E$ for $1_E$.
   Let $K (\ell) $ (resp., $\tilde K(\omega)$)
    denote the indicator function that
   $\dist(0, \ell)  \leq  r^{1/2}$  (resp., $\dist(0,\omega)
  \leq  r^{1/2}$).    
  Note that 
  \[UI - \tilde U \tilde I = U I K - \tilde U \tilde I \tilde K
   + \tilde U \tilde I\, (\tilde K - K) + (UI -\tilde U\, \tilde I) \, (1 - K).\]
   Note that if $K =   0$
  and $\|\omega - \ell \|_\infty \leq c_0 \, \log r$, then
  (for $r$ sufficiently large) $U = \tilde U $.  
Therefore,
\[ |\nu   (   \tilde I 
   \, \tilde U - I \, U    ) |  \leq 
 |\nu(\tilde I \tilde U \tilde K ) -
     \nu( I   UK) |  +
       \nu [ \tilde I \, \tilde U    \, |\tilde K - K| ]
        + \nu [ |\tilde I  - I | \, (\tilde U + U) ]  + \nu(E)     .\]
 Therefore it suffices to establish
 \begin{itemize}\setlength{\itemsep}{5pt}
 \item {\bf Claim 2:}
 \[  \nu [ (\tilde I + I)  \, |\tilde K - K| ]
   + \nu [ |\tilde I - I | ]  \leq O(r^{-u}), \]
     \end{itemize}
     and
  \begin{itemize}\setlength{\itemsep}{5pt}    
   \item {\bf Claim 3:}
 \[  |\nu(\tilde I \tilde U \tilde K ) -
     \nu( I   UK) |  \leq O(r^{-u}).
     \]
     \end{itemize}

 To prove Claim 2, we first note that if the loops
 are coupled, and  $IU(\ell) \neq 0$,
 then $\diam(\ell) \geq \delta r$ and similarly for the Brownian
 motion loops.
Also, if  $(\omega, \ell)$ are in the disk of radius $r^{1+u}$
 with 
 $E(\omega,\ell) =0$ and  $I \neq \tilde I$, then either
 the random walk loop or the Brownian loop does one
 of the following:
 \begin{itemize}\setlength{\itemsep}{5pt}
 \item     gets within
 distance $c_0 \log r$ of $\p D$ without leaving $D$
 \item  gets within distance $c_0 \log r $ of the the circle
 of radius $r$ without hitting the circle
 \item gets within distance $c_0 \log r$ of the circle of
 radius $\delta r$ without hitting the circle.
 \end{itemize}
   We can
 estimate the measure of loops that satisfy this as well as
 $\diam \geq \delta r$,  by using the rooted loop measure. 
 We will do the random walk case for the
 first bullet; the Brownian motion case and the
 other two bullets are done similarly.
   Let $\epsilon $ be any positive number. 
Note that the root must be in the disk of radius $r^{1+u}$
and hence there are $O(r^{2(1+u)})$  choices for the root. 
Using standard large deviations estimates, except for  a set
of loops of measure $o(r^{-5})$, these loops must have
time duration at least $r^{2-\epsilon}$.   We consider
the probability that a random walk returns to the origin
at time $2n$ after getting within distance $O(\log r)$ of $\p D$
but not leaving $D$. We claim that this is $O(\log r/n^{5/4})$.
To see this, first note that  by considering
 the reversal of the walk, we can
see this is bounded above by the probability that the random
walk gets within distance $O(\log r)$ in the first $n$ steps,
stays in $\p D$, and then returns to the original point at time
$2n$.  Given that the walk is within distance $O(\log r)$ of $\p D$, the
Beurling estimate implies that the probability of not leaving $D$
in the next $n/2$ steps is $O(\log r/n^{1/4})$.   Given this,
the probability of being at the origin at time $2n$ is $O(n^{-1})$. 
The rooted loop measure puts an extra factor of $(2n)^{-1}$ in.
Therefore the rooted loop measure of loops rooted at $z$
of time duration $2n \geq r^{2-\epsilon}$ 
that have diameter at least $\delta r$, and  get within
distance $c_0 \log r$ of $\p D$ but stay in $D$ is
$O(\log r/n^{9/4})$. 
By summing over $2n \geq r^{2-\epsilon} $, we see that the rooted loop measure of loops
rooted at $z$ that have diameter at least $\delta r$, get within
distance $c_0 \log r$ of $\p D$ but stay in $D$ is $O(r^{-(2-
 2 \epsilon)5/4})$.  If we sum over $|z| \leq r^{1+u}$, we get
 that the loop measure (rooted or unrooted) of such loops is
 $O(r^{2u + \frac{5\epsilon}2  - \frac 12 }) \leq O(r^{-1/4})$ 
 for $u$ sufficiently small.   This
 gives the upper bound on  $\nu [ |\tilde I - I | ] $.
    \begin{figure}[t]
  \centering
    %\def\svgwidth{\columnwidth}

  %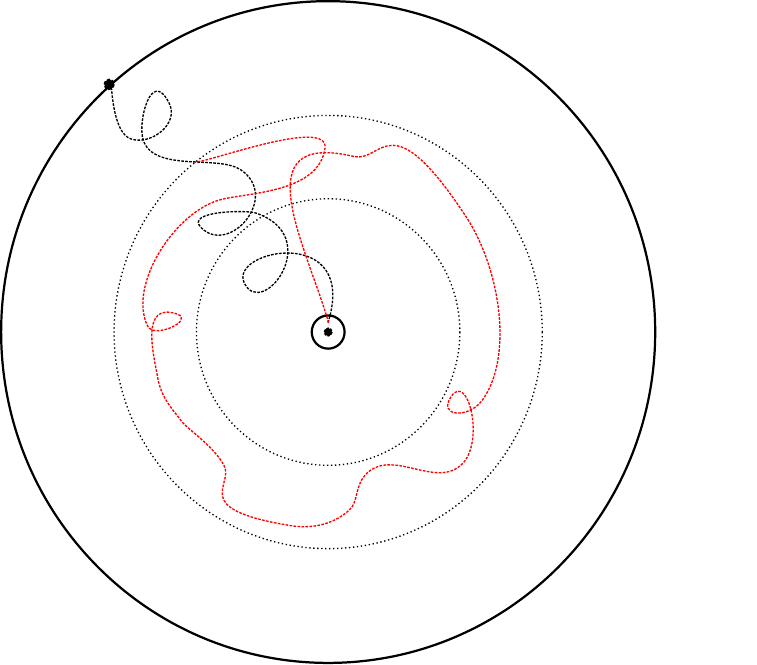
  %%%%%% Code for pdf_tex 
  \begingroup%
  \makeatletter%
  \providecommand\color[2][]{%
    \errmessage{(Inkscape) Color is used for the text in Inkscape, but the package 'color.sty' is not loaded}%
    \renewcommand\color[2][]{}%
  }%
  \providecommand\transparent[1]{%
    \errmessage{(Inkscape) Transparency is used (non-zero) for the text in Inkscape, but the package 'transparent.sty' is not loaded}%
    \renewcommand\transparent[1]{}%
  }%
  \providecommand\rotatebox[2]{#2}%
  \ifx\svgwidth\undefined%
    \setlength{\unitlength}{217.77807011bp}%
    \ifx\svgscale\undefined%
      \relax%
    \else%
      \setlength{\unitlength}{\unitlength * \real{\svgscale}}%
    \fi%
  \else%
    \setlength{\unitlength}{\svgwidth}%
  \fi%
  \global\let\svgwidth\undefined%
  \global\let\svgscale\undefined%
  \makeatother%
  \begin{picture}(1,0.87815748)%
    \put(0,0){\includegraphics[width=\unitlength]{coupling.pdf}}%
    \put(0.58212548,0.75790649){\color[rgb]{0,0,0}\makebox(0,0)[lb]{\smash{$\delta r \mathbb{D}$}}}%
    \put(0.46911387,0.45591112){\color[rgb]{0,0,0}\makebox(0,0)[lb]{\smash{$r^{1/2} \mathbb{D}$}}}%
    \put(0.11812857,0.80460199){\color[rgb]{0,0,0}\makebox(0,0)[lb]{\smash{$\zeta$}}}%
  \end{picture}%
\endgroup%

  %%%%%%

  \caption{Proof of Claim 3 of Lemma~\ref{lemma:loop-coupling}. We consider loops that get close to $0$, that is, enter $r^{1/2} \mathbb{D}$ and that are not contained in $\delta r \mathbb{D}$. Starting from inside $r^{1/2} \mathbb{D}$, the first part of such a loop (which is the only part shown in the diagram) can be written as an $h$-process aiming at $\zeta \in  \delta r \p \mathbb{D}$. In each dyadic annulus the probability to change parity of the winding number is uniformly bounded from below. Hence the exit distributions (at the annulus boundary component of larger radius) of conditioned random walks/Brownian $h$-processes of odd and even winding number are uniformly comparable. This implies that the paths of odd/even winding number can be coupled with positive probability in each annulus.}\label{coupling}
\end{figure}

Similarly, if $E(\omega,\ell) =0$  , $K(\ell) \neq \tilde K(\omega)$,
 and $I(\ell) +\tilde I(\omega) \geq 1$,  then 
 \[          r^{1/2} - c_0 \, \log r \leq \dist(0,\ell)
    \leq r^{1/2} + c_0 \, \log r.\]
  and for $r$ sufficiently large,
  \[\diam(\ell) \geq \frac{\delta r}{3}.\]
 Therefore $\nu[(I +\tilde I)
 |K-\tilde K| \, (1-E)]$ is bounded above by twice
 the $m$ measure of the set of loops $\ell$
 that satisfy these conditions.  This can be estimated
 as in the previous paragraph (or using an unrooted
 loop measure estimate as in the beginning of this proof); we omit the details since we have already written out analogous estimates.
 This finishes the proof of Claim 2.

\medskip

For the final claim,
first note that 
    \[   |\nu(\tilde I  \tilde K ) -
     \nu( I   K) |
 \leq     \nu [ (\tilde I + I)  \, |\tilde K - K| ]  + \nu [ |\tilde I - I | ]  , \]
   and hence by Claim 2,
       \[   |\nu(\tilde I  \tilde K ) -
     \nu( I   K) |= O(r^{-u}).\]
 Therefore to prove Claim 3 it suffices to show that
 $$|\nu(\tilde I \tilde U \tilde K ) -
     \nu( I   UK) | = \frac 12 \, |\nu(\tilde I  \tilde K ) -
     \nu( I   K) | + O(r^{-u}).
$$
     We will prove that the Brownian loop measure of loops of odd and even winding numbers, respectively, that intersect $r^{1/2} \mathbb{D}$ and $\delta r \TT$ (we write $\TT = \p \DD$), are the same up to a small error. (This estimate for Brownian loops can be done by explicit computation, but we give an argument that also works for random walk.) Recalling \eqref{def:brownian-loop-measure} and \eqref{def:bubble} we see that it will be enough to prove this for the Brownian bubble measure of bubbles in $\Delta_s=\{|z| > s\}$ that are attached at $0 < s \le r^{1/2}$ and intersect $  \delta r\TT$; it will be enough to do the argument for $s=r^{1/2}$. Choose $\zeta \in \delta r \TT$ arbitrarily. Consider a Brownian bubble in $\Delta_{r^{1/2}}$ attached at $r^{1/2}$ that intersects $\delta r \TT$ for the first time at $\zeta$. The initial part of the bubble, the part which connects $r^{1/2}$ with $\zeta$, has the distribution of a Brownian excursion between these points in the annulus $\delta r\DD \setminus r^{1/2} \DD$. We will show that this path is about as likely to have odd as even winding number. For $k=1,2,\ldots ,\lfloor \log r^{1/2} -2 \rfloor$, let $A_k$ be the annulus with boundary components $\delta r 2^{-(k+1)} \TT$ and $\delta r 2^{-k} \TT$. Note that the probability that an excursion as above separates the two boundary components of $A_k$ between its first hitting times of the boundary components is  uniformly bounded away from $0$ independently of $k,r$. (This follows from a harmonic measure estimate for Brownian motion and for the excursion by comparing Poisson kernels.) This means the excursion has positive probability (independent of $r,k$) to change winding number parity when crossing each annulus $A_k$ and that the hitting distributions of the outer boundary of $A_k$ for excursions of odd and even winding number (up to that hitting time) are uniformly comparable. We may therefore couple two excursions from $r^{1/2}$ in such a way that with probability $1-O(r^{-u})$ they have different winding number parity when arriving at $\zeta$, e.g., as follows: first couple the two sequences of annulus hitting points (in decreasing $k$ order). This can be done so that the sequences eventually agree with large probability. (See \cite[Section 1.5]{lindvall}.) Then given the hitting points, sample the subpaths connecting these hitting points. The paths couple (and run together) after the point at which the hitting points agree and the winding number parities (at the hitting time) are different. Since constants are independent of $r,k$, the excursions couple with probability $c>0$ (independent of $r,k$) in each of the annuli, and so the paths couple with probability $1-O((1-c)^{\log r^{1/2}}) = 1-O(r^{-u})$ for $u=u(c) > 0$. This shows that the probability of the loop attached at $r^{1/2}$ having odd winding number equals the probability of the loop having even winding number up to an error of $O(r^{-u})$. The analogous argument works for the random walk loops and so we have established Claim~3, which completes the proof of the lemma as well as Theorem~\ref{thm:odd-winding-number}.          
\end{proof}

\section{Asymptotics of $\Lambda$: proof of Theorem~\ref{thm:theorem4-new}}\label{sect:spinor convergence}
In this section we study  the asymptotics of  
$
\Lambda_A(z,a) =  {R_A(z,a)}/{H_A(z,a)},  
$
as $r_A \to \infty$. We recall that $R_A(z,a) = \mathbf{E}^z[Q(S[0,\tau]) I_a]$ where $S=S^z$ is simple random walk from $z$, $I_a$ is as defined in \eqref{Ia},
and that $H_A(z,a) = \mathbf{P}^z\left(S_{\tau} = a \right)$ is discrete harmonic measure. Consequently we have
\begin{equation}\label{def:f}
\Lambda_A(z,a) =\frac{R_A(z,a)}{H_A(z,a)} = \mathbf{E}^{z,a}[Q(S[0,\tau]) I], \quad z \in A,
\end{equation} 
 where $I=1\{S[1,\tau] \cap \{0,1\} = \emptyset\}$ 
 and  $\mathbf{E}^{z,a}$ denotes
  expectation   with respect to the measure under which $S$ is a simple random walk from $z$ conditioned to exit $A$ at $a$.

\subsection{Continuum functions}\label{sect:candidate} \label{sect:spin-def}
\begin{figure}[t]
  \centering
    \def\svgwidth{\columnwidth}

  %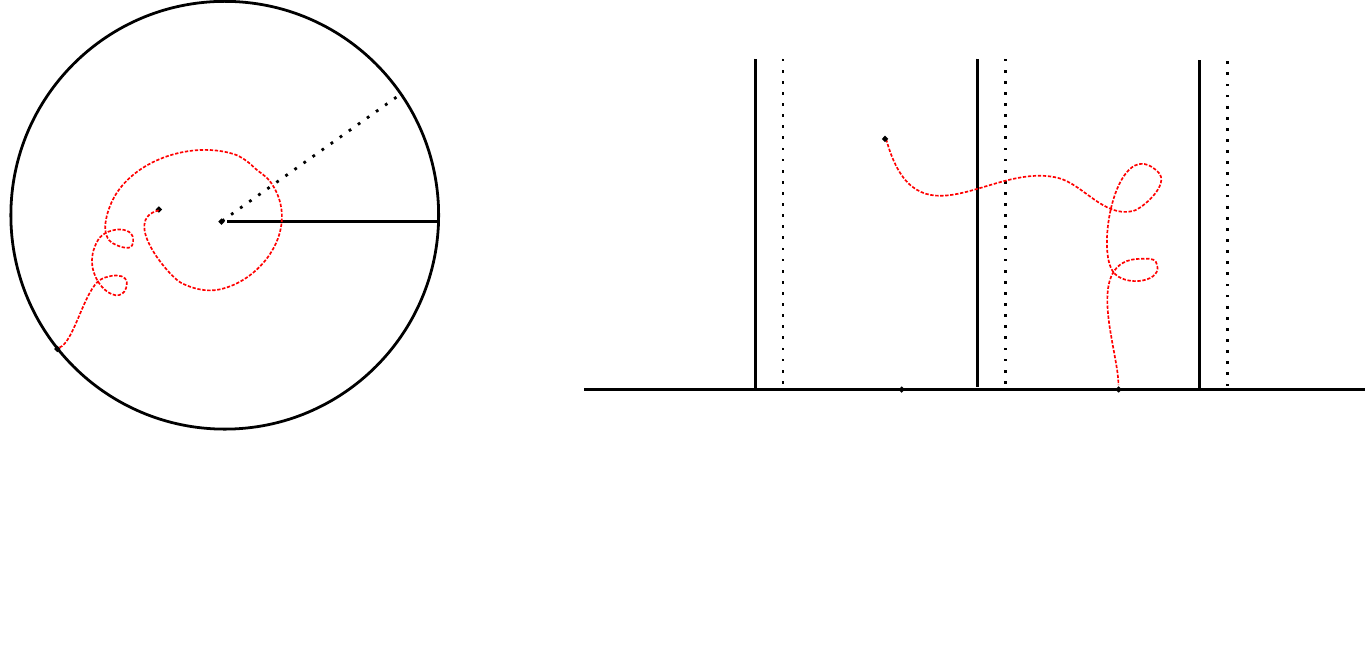
  %%%%% Code for pdf_tex
  \begingroup%
  \makeatletter%
  \providecommand\color[2][]{%
    \errmessage{(Inkscape) Color is used for the text in Inkscape, but the package 'color.sty' is not loaded}%
    \renewcommand\color[2][]{}%
  }%
  \providecommand\transparent[1]{%
    \errmessage{(Inkscape) Transparency is used (non-zero) for the text in Inkscape, but the package 'transparent.sty' is not loaded}%
    \renewcommand\transparent[1]{}%
  }%
  \providecommand\rotatebox[2]{#2}%
  \ifx\svgwidth\undefined%
    \setlength{\unitlength}{393.25233154bp}%
    \ifx\svgscale\undefined%
      \relax%
    \else%
      \setlength{\unitlength}{\unitlength * \real{\svgscale}}%
    \fi%
  \else%
    \setlength{\unitlength}{\svgwidth}%
  \fi%
  \global\let\svgwidth\undefined%
  \global\let\svgscale\undefined%
  \makeatother%
  \begin{picture}(1,0.474987)%
    \put(0,0){\includegraphics[width=\unitlength]{lambda.pdf}}%
    \put(0.14568935,0.32209685){\color[rgb]{0,0,0}\makebox(0,0)[lb]{\smash{$0$}}}%
    \put(0.250052,0.32764761){\color[rgb]{0,0,0}\makebox(0,0)[lb]{\smash{$\beta$}}}%
    \put(0.53922944,0.15393725){\color[rgb]{0,0,0}\makebox(0,0)[lb]{\smash{$0$}}}%
    \put(0.70197482,0.15393725){\color[rgb]{0,0,0}\makebox(0,0)[lb]{\smash{$2\pi$}}}%
    \put(0.86472014,0.15393725){\color[rgb]{0,0,0}\makebox(0,0)[lb]{\smash{$4\pi$}}}%
    \put(0.45955756,0.40135629){\color[rgb]{0,0,0}\makebox(0,0)[lb]{\smash{$\mathbb{H}$}}}%
    \put(-0.00168864,0.20038168){\color[rgb]{0,0,0}\makebox(0,0)[lb]{\smash{$a$}}}%
    \put(0.21059033,0.37266525){\color[rgb]{0,0,0}\makebox(0,0)[lb]{\smash{$\alpha$}}}%
    \put(0.07473248,0.39824319){\color[rgb]{0,0,0}\makebox(0,0)[lb]{\smash{$\mathbb{D}$}}}%
    \put(0.63734496,0.39470709){\color[rgb]{0,0,0}\makebox(0,0)[lb]{\smash{$\hat{z}$}}}%
    \put(0.12096359,0.30650663){\color[rgb]{0,0,0}\makebox(0,0)[lb]{\smash{$z$}}}%
    \put(0.63423966,0.15392421){\color[rgb]{0,0,0}\makebox(0,0)[lb]{\smash{$\hat{a}$}}}%
    \put(0.77817995,0.15456856){\color[rgb]{0,0,0}\makebox(0,0)[lb]{\smash{$\hat{a}+2\pi$}}}%
    \put(0.54249554,0.44538132){\color[rgb]{0,0,0}\makebox(0,0)[lb]{\smash{$\hat{\beta} \, \, \hat{\alpha}$}}}%
  \end{picture}%
\endgroup%

  %%%%%
  
  \caption{The construction of the continuum observable $\lambda(z,a)$ in $\mathbb{D}$. Left: We consider a Brownian $h$-process $W$ in $\mathbb{D}$ from $z$ conditioned to exit $\mathbb{D}$ at $a$. Right: $W$ is lifted to a continuous process $\hat{W}$ in $\mathbb{H}$ by the multivalued function $F(w)=-i \log w$ starting from $\hat{z}$, the point in $F(z)$ with real part in $[0,2\pi)$. The random variable $Q$ equals $+1$ if $\hat{W}$ exits $\mathbb{H}$ in $\{\hat{a} + 4 k \pi, \, k \in \mathbb{Z}\}$ and $-1$ otherwise and $\lambda$ is the expected value of $Q$ with respect to the law of $W$. By symmetry $\lambda(z,a)=0$ for $z \in \alpha$, the antipodal line relative to $a$. For paths avoiding $\alpha$, the value of $Q$ depends only on whether $\alpha \cup \beta$ separates $z$ from $a$ in $\mathbb{D}$.}\label{lambda}
\end{figure}

Given \eqref{def:f} and the fact that simple random walk converges to Brownian motion, we would expect any scaling limit of $\Lambda_A(0,a)$ to be the corresponding quantity with random walk replaced by an $h$-transformed Brownian motion conditioned to exit $D$ at $a \in \p D$.   Here we describe the
quantity in the continuum.

We will do the construction in the unit disk $\DD$;
we can use
conformal invariance to define the functions in other simply connected
domains. Let $\beta$ denote the line segment $[0,1) \subset \Disk$.
 Let  $a = e^{2i\theta_a}, 0 < \theta_a < \pi$, and let
  $\alpha=\alpha(a)=\{w: w=re^{i(2\theta_a + \pi)}, \, r \in (0,1) \}$ be the antipodal radius relative to $a$.  
Let $\mathbf{P}^{z,a}$ be a probability measure under which the process $W_t, \, t \in [0, T],$ is a Brownian $h$-process
 in $\DD$ started from $z$ conditioned to exit $\DD$ at $a$.  Here $T$ is the hitting time of $\p \DD$.   For each realization of the process $W$,
 we, roughly speaking, let $Q$ equal $\pm 1$ depending on whether the path $W[0,T]$
 intersects $\beta$ an even or an odd number of times.  This
 is a bit imprecise since there are an infinite number of intersections.
One way to make it precise by lifting by the multi-valued logarithm
$F(z) = -i\log z$.    The image of $\beta$, $F(\beta)$, is
the union of the  $2\pi$-translates
of the positive imaginary axis.  If we choose a particular image
of $z$, say $\hat z$,
then there is a corresponding image $\hat a$ of $a$ such that
$\hat a$ is on the boundary of the connected component of
$\HH \setminus F(\beta)$ that contains $\hat z$.
Once $\hat z$ is given, the $h$-process is mapped to an
$h$-process $\hat W_t$ in $\Half$ conditioned to leave $\Half$
at $F(a) = \{\hat a + 2\pi k: k \in \Z\}$.   Then $Q=+1$ if $\hat W$ exits $\Half$ at $\{\hat a + 4\pi k: k \in \Z\}$, and $Q=-1$ if $\hat W$ exits $\Half$ at $\{\hat a + 4\pi (k+1/2): k \in \Z\}$.
For $z\in\DD, a\in\p\DD$, let
\[   \lambda(z,a )  = \lambda_{\DD}(z,a) =  \E^{z,a}[Q ] 
  .\]
For the  simply connected domain $D_A$, we define for $z\in D_A$ and $a\in\p D_A$  
 \begin{equation}\label{lambdaA}
 \lambda_A(z,  a )  = \lambda(f_A(z), f_A(a)),
 \end{equation}
where we recall $f_A: D_A \rightarrow \DD$ with $f_A(w_0) = 0,
f_A'(w_0) > 0$.

\begin{remark} We can also equivalently consider a function $\hat \lambda$ living on the Riemann surface given by the branched two-cover of $\DD$ with branch cut $\beta$. We lift the $h$-process in $\DD$ so that it starts on the top sheet of the two-cover. The observable is the expectation of the random variable giving $+1$ if the $h$-process reaches the boundary on the top sheet and $-1$ if it reaches it on the bottom sheet. Then $\hat{\lambda}$ only changes sign when evaluated at the two different points in the fiber of $z \in \DD$ and so could in physics language be called a ``spinor''. See \cite{ic_spinor} for a similar construction in a related context.  
\end{remark}

Note that symmetry implies that    $\lambda(z;a ) = 0$
for $z \in \alpha$. 
Let $\tau = \inf\{t: W_t \in \alpha\}$.  If $\tau > T$,
then 
  the value of $Q$ is determined:  it is $+1$
if $z$ and $a$ are in the same component of $\Disk \setminus
(\alpha \cup \beta)$ and $-1$ if they are in different components.
Therefore,
\begin{equation}\label{lamb}
  |\lambda(z,a)| = | \E^{z,a}[Q; \tau > T]|
   =
   \mathbf{P}^{z,a}\left( W[0,T] \cap \alpha = \emptyset \right) = \frac{h_{\Disk \setminus \alpha}(z,a)}{h_{\Disk}(z,a)},
   \end{equation}
the last expression being a quotient of Poisson kernels.
\begin{lemma}  \label{sincos}
If $0 \leq \theta < \pi$, then as $\epsilon \downarrow 0$,
\[  \lambda (-\epsilon;a) = 2 \, \epsilon^{1/2} \,
  \sin \theta_a + O(\epsilon). \]
 \end{lemma}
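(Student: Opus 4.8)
The plan is to compute the right-hand side of \eqref{lamb} explicitly in the case $z = -\epsilon$, i.e., to evaluate the ratio $h_{\DD \setminus \alpha}(-\epsilon, a)/h_\DD(-\epsilon, a)$ and extract its behaviour as $\epsilon \downarrow 0$. Recall that by the construction $\beta = [0,1)$ and $\alpha = \alpha(a)$ is the radius with argument $2\theta_a + \pi$; since we evaluate at the point $-\epsilon$, which lies on the negative real axis (argument $\pi$), it is convenient to note that for $0 \le \theta_a < \pi$ the point $-\epsilon$ is not on $\alpha$ (except in a degenerate limiting case which can be handled separately), so the formula \eqref{lamb} applies with the sign of $Q$ determined by whether $\alpha \cup \beta$ separates $-\epsilon$ from $a$; in any case we only need $|\lambda|$ up to the sign, and the sign is $+1$ here because $-\epsilon \to 0 \in \beta$ sits on the same side.

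First I would uniformize $\DD \setminus \alpha$. The domain $\DD$ slit along the radius $\alpha$ is conformally a disk, and there is an explicit conformal map $\varphi$ from $\DD \setminus \alpha$ onto $\DD$: compose a rotation sending $\alpha$ to $[-1,0]$ (really to the segment from $0$ to $-1$, i.e. rotate by $e^{-i(2\theta_a+\pi)}$ so $\alpha$ lands on $(0,1)$, then we want the slit to start at the boundary), then the map $w \mapsto w^{1/2}$ on the slit disk, suitably normalized, then a Möbius automorphism of $\DD$. The key fact is how the boundary point $a$ and the interior point $-\epsilon$ behave: $a = e^{2i\theta_a}$ sits on $\partial \DD$ at angular distance $\pi$ from the tip-direction of $\alpha$, so after the square-root map it moves to a point at ``half the angle.'' Meanwhile the interior point $-\epsilon$ is near $0$, which is the \emph{tip} of the slit $\alpha$ (the branch point of $w \mapsto w^{1/2}$), so its image is at distance $\asymp \epsilon^{1/2}$ from the boundary of the target disk. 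Then I would use the conformal covariance of the Poisson kernel, $h_{\DD \setminus \alpha}(z,a) = |\varphi'(z)|\, h_\DD(\varphi(z), \varphi(a))$, together with the explicit formula \eqref{poissondisk} for $h_\DD$, and the known expansion $h_\DD(w, \zeta) = \frac{1}{2\pi}\frac{1-|w|^2}{|w-\zeta|^2}$, to get both numerator and denominator as explicit elementary functions of $\epsilon$ and $\theta_a$. Dividing, the conformal-radius-type factors and the $2\pi$'s cancel, and the leading term should come out to be $2\epsilon^{1/2}\sin\theta_a$ after a short trigonometric simplification; the next-order term is $O(\epsilon)$ because all the maps involved are analytic at $0$ away from the branch point and the square-root expansion $\sqrt{1 - c\epsilon} = 1 - \tfrac{c}{2}\epsilon + O(\epsilon^2)$ contributes only at order $\epsilon$.

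Concretely, after rotating so that the slit becomes $[0,1)$ and $a$ becomes a boundary point $e^{i\phi}$ with $\phi = 2\theta_a - \pi$ (so that $a$ is at angular distance $\pi - $ nothing... I would track this carefully), the map $g(w) = -i\frac{1-\sqrt w}{1+\sqrt w}$ or a variant sends $\DD$ slit along $[0,1)$ to a half-plane or disk with the tip going to a boundary point; one then reads off that $h_{\text{slit}}(-\epsilon, a)$ has a $\sqrt\epsilon$ vanishing governed by the derivative of $\sqrt{\cdot}$ at the image of $-\epsilon$, while $h_\DD(-\epsilon, a) \to h_\DD(0,a) = \frac{1}{2\pi}\frac{1}{|a|^2} \cdot (1 - 0) \cdot \ldots$ — wait, more precisely $h_\DD(-\epsilon, a) = \frac{1}{2\pi}\frac{1-\epsilon^2}{|-\epsilon - a|^2} = \frac{1}{2\pi}(1 + O(\epsilon^2))/|a + \epsilon|^2 = \frac{1}{2\pi}(1 + O(\epsilon))$ since $|a|=1$. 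So the ratio is, to leading order, $2\pi \cdot h_{\text{slit}}(-\epsilon, a)$, and the task reduces to showing $h_{\DD\setminus\alpha}(-\epsilon, a) = \frac{1}{\pi}\epsilon^{1/2}\sin\theta_a + O(\epsilon)$.

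The main obstacle I expect is bookkeeping of the conformal maps and, in particular, correctly identifying which branch of the square root to take and which boundary arc $a$ lands on, so that the $\sin\theta_a$ (rather than $\cos\theta_a$ or $\sin(\theta_a/2)$ or similar) emerges with the correct constant $2$. The factor-of-$2$ conventions in the definition of $\theta_a$ (recall $f_A(a) = e^{2i\theta_a}$) are exactly designed to make this clean, and the appearance of $\sin\theta_a$ rather than $\sin 2\theta_a$ is the footprint of the square-root map halving angles: the angular separation between $a$ and the tip direction of $\alpha$, measured on $\partial\DD$, is (up to orientation) $2\theta_a - (2\theta_a + \pi) = -\pi$ or equivalently the separation between $a$ and the \emph{near} side of the slit is $2\theta_a$, which the square root turns into $\theta_a$, and then the boundary Poisson kernel of a half-plane at a point at height $\asymp \sqrt\epsilon$ picks up a $\sin$ of that angle. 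I would also double-check the claimed error $O(\epsilon)$ (as opposed to $O(\epsilon \log \epsilon)$ or $O(\epsilon^{3/2})$): since away from the branch point everything is real-analytic and the only non-smooth ingredient is $\sqrt{\cdot}$ evaluated at a point that is $\epsilon$ away from $0$ in a fixed analytic chart, the expansion is in powers of $\sqrt\epsilon$, so the correction after $2\sqrt\epsilon\,\sin\theta_a$ is genuinely $O(\epsilon)$, with no logarithm. An alternative, perhaps cleaner, route that avoids some of the map-chasing is to note that \eqref{lamb} already identifies $|\lambda(-\epsilon,a)|$ with $h_{\DD\setminus\alpha}(-\epsilon,a)/h_\DD(-\epsilon,a)$ and that near the tip of $\alpha$ the slit disk looks like a slit plane, for which the Poisson kernel at the tip is classical; matching the normalization via the behaviour of $h_\DD$ at $0$ then gives the constant. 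Either way the computation is elementary once the geometry is set up, so I would present the map-chasing approach as the main line and remark that it is a direct computation.
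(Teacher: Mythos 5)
Your proposal is correct and follows essentially the same route as the paper: starting from \eqref{lamb}, one rotates so that $\alpha$ becomes $[0,1)$ (sending $-\epsilon$ to $\epsilon e^{\pm 2i\theta_a}$ and $a$ to $-1$), uniformizes the slit disk by an explicit square-root map (the paper uses $\phi(z)=2\sqrt z/(z+1)$ onto $\HH$), and applies the conformal covariance of the Poisson kernel, with the $\epsilon^{1/2}$ coming from the branch point at the tip and the $\sin\theta_a$ from the angle-halving, exactly as you describe. The remaining ingredients you identify --- the positivity of $\lambda(-\epsilon,a)$ to fix the sign, the expansion $2\pi h_\DD(-\epsilon,a)=1+O(\epsilon)$, and the $O(\epsilon)$ error from the powers-of-$\sqrt\epsilon$ expansion --- are precisely those used in the paper.
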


\begin{proof}
The map 
\[   \phi(z) = \frac{2 \sqrt z}{z+1} , \;\;\;\; \phi'(z) = \frac{1 - z}
      {\sqrt z \, (z+1)^2} \]
      is a conformal transformation of $\Disk \setminus [0,1)$ onto
      the upper half plane ${\mathbb H}$ with 
      \[  \phi(e^{2i\theta}) = \frac{1}{\cos \theta}, \;\;\;\;
      | \phi'(e^{2i\theta})| = \frac{\sin \theta }{2\cos^2 \theta} , \;\;\;
   \phi(\ee e^{2i\mu}) = 2 \, \ee^{1/2} \, e^{i\mu} \, [1 + O(\ee)].\]
The scaling rule for the Poisson kernel implies that
\begin{align*}  \pi \, h_{\Disk \setminus [0,1)}(z, e^{i2\theta})
  &= \pi\, | \phi'(e^{i 2 \theta})| \, h_{\mathbb H}( \phi(z),
  \phi( e^{i2\theta}))   \\
  &= \frac{ | \phi'(e^{i 2 \theta})| \, \Im[ \phi(z)]}
        {   [ \phi( e^{i2\theta})  - \Re  \phi(z)]^2 + [\Im \phi(z)]^2 }  . \end{align*}
        In particular,
    \[   2\pi \,  h_{\Disk \setminus [0,1)}(\ee e^{i 2\mu}, e^{i2\theta})
            =  { 2\ee^{1/2} \,\sin\mu \sin \theta}   + O(\ee).\]
            Therefore, by rotational symmetry, with $a=e^{2i\theta_a}$, 
            $$2\pi h_{ \Disk \setminus \alpha}(-\eps,a) = 2\pi h_{\Disk \setminus [0,1)}(\eps e^{i2\theta_a},-1)=2\eps^{1/2}\sin \theta_a + \bigo{\eps}.$$
            Using in \eqref{lamb} the fact that for any $a\in\p\DD, \lambda (-\eps,a)>0$ and that $2\pi h_{\DD}(-\eps,e^{i2\theta_a})=1+\bigo{\eps}$ now concludes the proof. 
  \end{proof}          
\subsection{Asymptotics of $\Lambda_A(0,a)$}

   Our construction will be
 somewhat neater if we start with the observation in
 the next lemma.
 
 \begin{lemma} \label{6789}
There exists $\epsilon > 0$ such that the following holds.
Suppose $D$ is a simply connected domain containing the origin
and $f: \Disk \rightarrow D$ is a conformal transformation with
$f(0) = 0$.  Let $S = \{ x+ iy: |x|,|y| < |f'(0)| \epsilon \}$
be a square centered at the origin and for $0\leq t < 1$, let $\gamma(t) =
f(t)$. Let $\sigma = \inf\{t: \gamma(t) \in \p S\}$.
Then $\gamma[\sigma,1) \cap S = \emptyset $. 
\end{lemma}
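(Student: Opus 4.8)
The plan is to normalize $f$ to the standard class of univalent maps and then exploit that along the real radius $[0,1)$ the modulus of $h:=f/f'(0)$ is strictly increasing, combined with a Lipschitz bound on the radial function of the relevant rotated square, to rule out re-entry. First I reduce: with $h(z)=f(z)/f'(0)$ the map $h$ is univalent on $\DD$ with $h(0)=0$, $h'(0)=1$, one has $\gamma(t)=f'(0)\,h(t)$, and $\gamma(t)\in S$ exactly when $h(t)\in S':=S/f'(0)$, which is a square of half-side $\epsilon$ centered at the origin, rotated by $-\arg f'(0)$. So it suffices to show $\{t\in[0,1):h(t)\in S'\}=[0,\sigma)$. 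I will use the standard consequences of the Koebe growth and distortion theorems (see \cite{Lawler_cip}): for $t\in(0,1)$,
\[
|h(t)|\ge\frac{t}{(1+t)^2},\qquad \Bigl|\frac{t h'(t)}{h(t)}-\frac{1+t^2}{1-t^2}\Bigr|\le\frac{2t}{1-t^2}.
\]
Writing $h(t)=\rho(t)e^{i\phi(t)}$ with the continuous branch normalized by $\phi(0^+)=0$, the second estimate gives $\frac{d}{dt}\log\rho(t)=\frac1t\Re\frac{t h'(t)}{h(t)}\ge\frac{1-t}{t(1+t)}>0$ and $|\phi'(t)|=\frac1t|\Im\frac{th'(t)}{h(t)}|\le\frac{2}{1-t^2}$; in particular $\rho$ is strictly increasing.

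Next I set up the geometry. Let $R(\psi)$ be the radial function of $S'$, i.e.\ the unique $r>0$ with $re^{i\psi}\in\partial S'$; it exists because $S'$ is bounded, convex and contains $0$ in its interior, it satisfies $\epsilon\le R(\psi)\le\sqrt2\,\epsilon$, and from the explicit formula $R(\psi)=\epsilon/\max(|\cos(\psi+\arg f'(0))|,|\sin(\psi+\arg f'(0))|)$ one checks that $R$ is Lipschitz with constant $\sqrt2\,\epsilon$; moreover $re^{i\psi}$ lies in $S'$ (resp.\ $\partial S'$) iff $r<R(\psi)$ (resp.\ $r=R(\psi)$). Since $\rho(t)\ge t/(1+t)^2\to\frac14$ as $t\to1^-$ while $R\le\sqrt2\,\epsilon<\frac14$ for $\epsilon$ small, the curve leaves $S'$. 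Let $t_0$ be the first time $\rho(t_0)=\sqrt2\,\epsilon$ and $\sigma$ the first exit time of $h$ from $S'$; then $\sigma\le t_0$ since $h(0)=0\in S'$ but $h(t_0)\notin S'$ (indeed $S'\subset\{|w|<\sqrt2\,\epsilon\}$, so for every $t\ge t_0$ we already have $h(t)\notin S'$), and $t_0\le3\epsilon$ for $\epsilon$ small by the growth bound.

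The core step is to show that $G(t):=\rho(t)-R(\phi(t))$ is strictly increasing on $[\sigma,t_0]$. It is Lipschitz there, and for almost every $t$,
\[
G'(t)=\rho'(t)-R'(\phi(t))\,\phi'(t)\ge\frac{\rho(t)(1-t)}{t(1+t)}-\sqrt2\,\epsilon\cdot\frac{2}{1-t^2}.
\]
On $[\sigma,t_0]$ we have $\rho(t)\ge\rho(\sigma)\ge\epsilon$ (every point of $\partial S'$ has modulus at least $\epsilon$, and $\rho$ is increasing) and $t\le t_0\le3\epsilon$, so the first term is at least $\tfrac15$ and the second at most $3\epsilon$; hence $G'(t)\ge\tfrac15-3\epsilon>0$ once $\epsilon$ is a sufficiently small absolute constant. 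Since $h(\sigma)\in\partial S'$ forces $\rho(\sigma)=R(\phi(\sigma))$, i.e.\ $G(\sigma)=0$, strict monotonicity gives $\rho(t)>R(\phi(t))$, i.e.\ $h(t)\notin S'$, for every $t\in(\sigma,t_0]$. Combining this with the case $t>t_0$ above and with $h(\sigma)\in\partial S'\not\subset S'$, we obtain $h([\sigma,1))\cap S'=\emptyset$, which is the assertion after undoing the normalization.

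The step I expect to be the main obstacle is precisely this last one, i.e.\ ruling out re-entry: because $S'$ is rotated by the a priori arbitrary argument of $f'(0)$ and its radial function oscillates between $\epsilon$ and $\sqrt2\,\epsilon$, monotonicity of $|h(t)|$ alone does not suffice, and one must combine it with the quantitative turning estimate $|\phi'(t)|\le2/(1-t^2)$ in order to dominate the $O(\epsilon)$ oscillation of $R\circ\phi$ — which is exactly what determines how small $\epsilon$ must be taken.
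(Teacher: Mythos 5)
Your argument is correct and is precisely the ``standard distortion estimates'' route that the paper's one-line proof alludes to: normalizing to the schlicht class, using the growth theorem and the bound on $zh'(z)/h(z)$ to get strict monotonicity of $|h(t)|$ plus control on the turning $\phi'(t)$, and then beating the $O(\epsilon)$-oscillation of the square's radial function on the short interval $[\sigma,t_0]$. No gaps; this fills in exactly what the paper leaves to the reader.
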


\begin{proof}  This can be proved using standard distortion
estimates for conformal maps.
\end{proof}

We now set some notation.  

\begin{itemize}\setlength{\itemsep}{5pt}

\item 
Let $U_n =\{(x,y)\in \Z^2:|x|<n, |y|<n\}$ denote the discrete square
centered at $0$ of side length $2n$ and $  U_n^- = U_n \setminus \{0, \ldots, n\}$ be the slit discrete square. 

\item  Let $V = V_n, V^- = V_n^-$ be the corresponding continuum domains
\[  V_n = \{x+iy: |x|, |y| < n \},\;\;\;V_n^- = V \setminus (0,n].\]

 \item Let $\epsilon$ be a fixed positive number
as in Lemma \ref{6789}. For every $A$, let  $n_A = \lfloor \epsilon r_A  \rfloor
 = \epsilon \, r_A + O(1)$ and
let  $U_A, U_A^-,V_A,V_A^-$ denote
 $U_{n_A}, U_{n_A}^-,V_{n_A},V_{n_A}^-$, respectively.

 \item  Let $\beta^* =(0,w_0] \cup f_A^{-1}(\beta)  $.
The curve $\beta^*$ goes from
 $0$ to $f^{-1}_A(1) \in \p D_A$. 
By Lemma \ref{6789} we can see that after the curve
$\beta^*$ hits $\p V_A$ it does not return to $V_A$.
We will also write $\beta^*$ for the arc $\beta^* \cap
\overline {V_A}$.

 \item For $z,w \in \overline{V^-_A} \setminus
 \beta^* $ we define $Q^{z,w} = -1$ if $\beta^*   $   separates $z$ from $w$   in $V^-_A$ and $+1$ otherwise.

 \item  We use $S_j$ and $B_t$, respectively, to denote random walk and Brownian motion, as well as the $h$-processes defined from them. The probability measure will always make it clear whether we are dealing with the unconditioned or the conditioned process. We use $\Prob^{z,a},
 \E^{z,a}$ to denote probabilities and expectations with
 respect to an $h$-process conditioned to leave the domain
 at $a$.  We will use this notation for both $S_j$ and $B_t$.
 This should not cause confusion.   All $h$-processes in
 this paper will be those given by boundary points, that is, where
 the harmonic function is the Poisson kernel.  We recall that
 \begin{equation}\label{dischrel}
\Prob^{z,a}
\{
(S _0, \ldots S  _k)=(\omega_0, \ldots ,\omega_k)
\}  = \frac{H_A (\omega_k,a)}{H_A (z,a)}\,
\Prob^{z}
\{ (S _0, \ldots S  _k)=(\omega_0, \ldots ,\omega_k) \} ,
\end{equation}
with a similar formula for the Brownian $h$-process. 
\end{itemize}
\begin{figure}[t]
  \centering
    \def\svgwidth{\columnwidth}
  %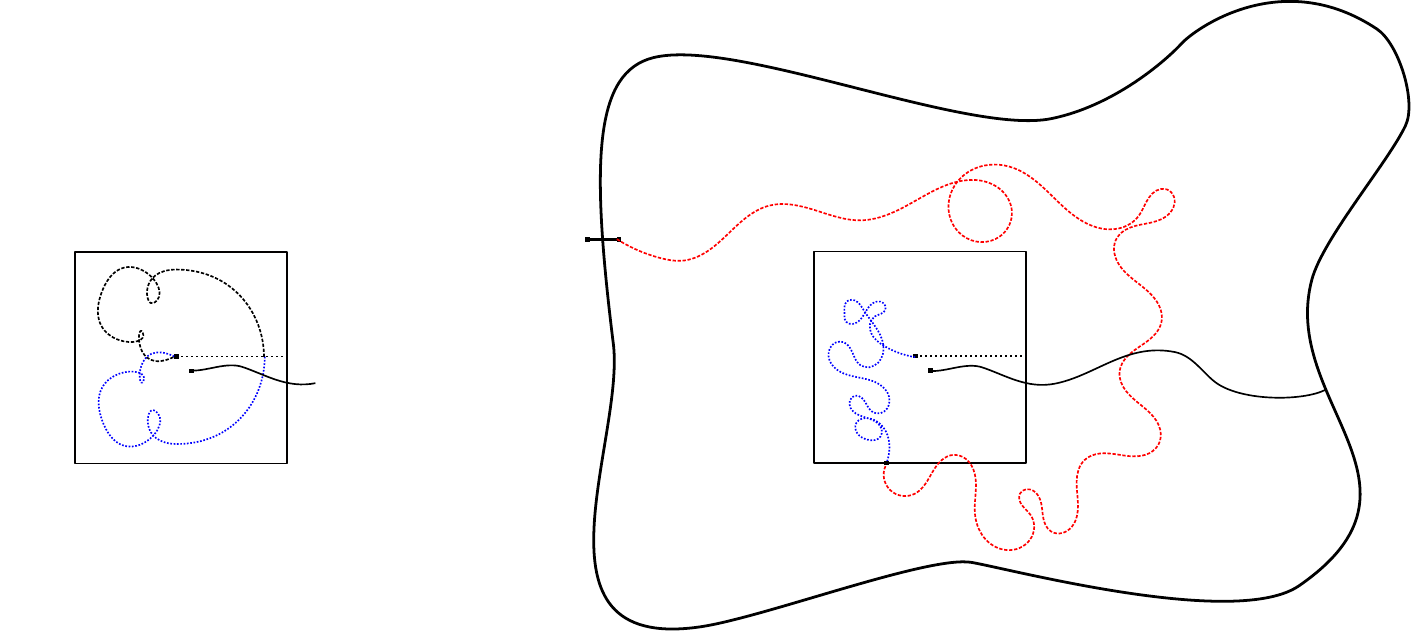
  %%%%% Code for pdf_tex
  
  \begingroup%
  \makeatletter%
  \providecommand\color[2][]{%
    \errmessage{(Inkscape) Color is used for the text in Inkscape, but the package 'color.sty' is not loaded}%
    \renewcommand\color[2][]{}%
  }%
  \providecommand\transparent[1]{%
    \errmessage{(Inkscape) Transparency is used (non-zero) for the text in Inkscape, but the package 'transparent.sty' is not loaded}%
    \renewcommand\transparent[1]{}%
  }%
  \providecommand\rotatebox[2]{#2}%
  \ifx\svgwidth\undefined%
    \setlength{\unitlength}{406.82323456bp}%
    \ifx\svgscale\undefined%
      \relax%
    \else%
      \setlength{\unitlength}{\unitlength * \real{\svgscale}}%
    \fi%
  \else%
    \setlength{\unitlength}{\svgwidth}%
  \fi%
  \global\let\svgwidth\undefined%
  \global\let\svgscale\undefined%
  \makeatother%
  \begin{picture}(1,0.44629737)%
    \put(0,0){\includegraphics[width=\unitlength]{decomposition2.pdf}}%
    \put(0.43905066,0.30188198){\color[rgb]{0,0,0}\makebox(0,0)[lt]{\begin{minipage}{0.11688089\unitlength}\raggedright $a_-$\end{minipage}}}%
    \put(0.3702247,0.30188198){\color[rgb]{0,0,0}\makebox(0,0)[lt]{\begin{minipage}{0.11688089\unitlength}\raggedright $a_+$\end{minipage}}}%
    \put(0.64669291,0.22536137){\color[rgb]{0,0,0}\makebox(0,0)[lt]{\begin{minipage}{0.07210643\unitlength}\raggedright $0$\end{minipage}}}%
    \put(0.65972351,0.17765086){\color[rgb]{0,0,0}\makebox(0,0)[lt]{\begin{minipage}{0.07210643\unitlength}\raggedright $w_0$\end{minipage}}}%
    \put(0.84217462,0.13481097){\color[rgb]{0,0,0}\makebox(0,0)[lb]{\smash{$f^{-1}(\beta)$}}}%
    \put(0.52153025,0.21013497){\color[rgb]{0,0,0}\makebox(0,0)[lb]{\smash{$V^{-}$}}}%
    \put(0.12353033,0.22509991){\color[rgb]{0,0,0}\makebox(0,0)[lt]{\begin{minipage}{0.07210643\unitlength}\raggedright $0$\end{minipage}}}%
    \put(0.13679811,0.17998786){\color[rgb]{0,0,0}\makebox(0,0)[lt]{\begin{minipage}{0.07210643\unitlength}\raggedright $w_0$\end{minipage}}}%
    \put(0.22274097,0.14464325){\color[rgb]{0,0,0}\makebox(0,0)[lb]{\smash{$f^{-1}(\beta)$}}}%
    \put(-0.00163231,0.2098735){\color[rgb]{0,0,0}\makebox(0,0)[lb]{\smash{$V^{-}$}}}%
  \end{picture}%
\endgroup%
  %%%%%
  
  \caption{Proofs of Lemma~\ref{discretelambda} and Lemma~\ref{lem:continuouslambda}. Left: Paths that hit the horizontal slit before the boundary of the square 
  can be reflected and coupled. Since these paths have winding number of different parity, the total contribution to $\Lambda_A$ of these paths is 
  $0$. Right: Only paths reaching $\p V$ before the horizontal slit contribute to $\Lambda_A$, which can then be written as a sum/integral over the first visited point $w$ of 
  the boundary of the square. The asymptotics of $\Lambda_A(0,a)$ can therefore be found by separately considering the probability of a random walk from $0$ reaching $\p V$ before hitting the horizontal slit (Theorem~\ref{slitsquarehm}), and by using strong approximation to compare the discrete with the continuum Poisson kernels and $\Lambda$ with $\lambda$ on $\p V$ (Theorem~\ref{MGthm} and Proposition~\ref{approxaftersquare}, respectively).}\label{figdecomposition}
\end{figure}

\begin{lemma}\label{discretelambda}
We have
\begin{align*}\Lambda_A(0,a) & =\sum_{w\in\partial\squ_A} H_{\p U^-_A}(0,w) \, \frac{H_A(w,a)}{H_A(0,a)}\,  Q^{0,w}\Lambda_A(w,a).
\end{align*}
\end{lemma}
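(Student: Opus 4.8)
The plan is to run the simple random walk from $0$ until it leaves the slit square $U_A^-$ and then apply the strong Markov property, exploiting that $Q$ is multiplicative under concatenation (see \eqref{5-31}) and that its value on the part of the path confined to $U_A^-$ is essentially a topological invariant. Recall from \eqref{def:f} that $\Lambda_A(0,a)=R_A(0,a)/H_A(0,a)$ and that $R_A(w,a)=\Lambda_A(w,a)H_A(w,a)$, so after multiplying through by $H_A(0,a)$ the claimed identity is equivalent to
\[
R_A(0,a)=\sum_{w\in\partial\squ_A} H_{\p U_A^-}(0,w)\,Q^{0,w}\,R_A(w,a).
\]
Since $r_A$ is large, Koebe's theorem gives $U_A\subset A$, so every walk from $0$ contributing to $R_A(0,a)$ leaves $U_A^-$ before leaving $A$; let $\rho$ be its first exit time from $U_A^-$ and $v=S_\rho$. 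On the event $I$ the walk avoids $\{0,1\}$, so $v\in\partial\squ_A\cup\{2,\dots,n_A\}$, and since $(n_A,0)$ has no neighbour in $U_A^-$ it is unreachable, so in fact $v\in\partial\squ_A\cup\{2,\dots,n_A-1\}$. Splitting the path at $\rho$ into $\omega_1:0\to v$ (staying in $U_A^-$) and a continuation $\omega_2:v\to a$ that is exactly a walk contributing to $R_A(v,a)$, using $Q(\omega_1\oplus\omega_2)=Q(\omega_1)Q(\omega_2)$ and the Markov property, one obtains
\[
R_A(0,a)=\sum_{v\in\partial\squ_A\cup\{2,\dots,n_A-1\}}\Big(\sum_{\substack{\omega_1:\,0\to v\\ \omega_1\subset U_A^-}}Q(\omega_1)p(\omega_1)\Big)\,R_A(v,a).
\]

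Two facts then finish the proof. First, for $v\in\partial\squ_A$ all walks $\omega_1:0\to v$ staying in $U_A^-$ have the same $Q$-value: the difference loop of two such walks is a closed walk in $U_A^-\cup\{0\}$ (up to a two-edge detour through $v$ near $\partial\squ_A$, which is irrelevant since $v$ is far from $w_0$), and any such loop has winding number $0$ about $w_0$ — the positive-real slit $\{0,\dots,n_A\}$ prevents a lattice loop in $U_A^-$ from crossing the real axis at $\Re\in\{1,\dots,n_A\}$, and a ray-crossing argument (slide the ray from $w_0$ upward and then rightward past $\p V_A$, meeting the loop nowhere) gives winding $0$, hence $Q=+1$ on the loop. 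Thus $Q(\omega_1)$ depends only on $v$; by the definition of $Q^{z,w}$ it equals $Q^{0,v}$, and the inner sum equals $Q^{0,v}H_{\p U_A^-}(0,v)$. Second, for an interior slit point $v=(j,0)$, $2\le j\le n_A-1$, the inner sum vanishes: reflection across the real axis is a weight-preserving involution on the walks $\omega_1:0\to v$ in $U_A^-$ (using that $U_A^-$ is symmetric about $\R$ and $0,v\in\R$) that swaps the two sides of approach to $v$, and it reverses the sign of $Q$, because the loop formed by following $\omega_1$ to $v$ and returning along the reflected walk crosses the slit exactly once, at the interior point $v$, and hence winds an odd number of times about $w_0$ (the same ray-crossing bookkeeping). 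Hence only the terms with $v\in\partial\squ_A$ survive, which is the displayed identity; see Figure~\ref{figdecomposition}.

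The main obstacle is making these two planar-topology statements rigorous: the vanishing of the winding number about $w_0$ for loops confined to the slit square, and the sign flip of $Q$ under reflection for loops that cross the slit once. These are of the same flavour as the invariance of $Q(\eta)$ on self-avoiding paths through $[0,1]$ used in Section~\ref{sect:det} (and the related topological argument there), and once they are in place the remainder of the argument is just the strong Markov property and rearrangement of sums.
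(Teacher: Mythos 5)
Your argument is correct and is essentially the paper's own proof: both decompose at the first exit time of the slit square via the strong Markov property, kill the contribution of exits through the interior of the slit by the reflection involution (the concatenated symmetric loop crossing the slit once and hence winding oddly about $w_0$), and identify $Q(\omega_1)=Q^{0,w}$ for exits through $\p U_A$ by the same topological separation argument. The only (immaterial) difference is that you work with the unconditioned quantity $R_A$ and the weights $p(\omega)$, whereas the paper phrases the same computation for $\Lambda_A$ under the $h$-process measure $\mathbf{P}^{0,a}$, the two being equivalent via \eqref{dischrel}.
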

\begin{proof} 
We write $U, U^-$ for $U_A, U_A^-$.
Let $\sigma = \min\{k \ge 1: S_k\in\N\cup\{0\}\}$.
Since  $S_{\tau\wedge\sigma}\in \{0,1\}$ implies
that  $I=0$,   using \eqref{def:f} 
and the strong Markov property applied at the stopping time $\rho=\inf\{k\geq 0:S_k\in\p U^-\}$ gives
\begin{eqnarray}
\Lambda_A(0,a)& = &  \mathbf{E}^{0,a}[Q(S[0,\rho])\, I\, \Lambda_A(S_\rho,a)] \nonumber\\
& =  & \mathbf{E}^{0,a}[Q(S[0,\rho])  \,  \Lambda_A(S_{\rho},a); S_{\rho}\in\{2, \ldots , n\}] \nonumber\\
& & \hspace{1in} + \mathbf{E}^{0,a}[Q(S[0,\rho])\,\Lambda_A(S_{\rho},a)\,;\,S_{\rho}\in\partial\squ].\label{phidecomp} 
\end{eqnarray}
 
Suppose $\omega$ is a nearest-neighbor path from $0$ to $w\in\{2, \ldots , n\}$, and otherwise staying in $\slit$. Then if $\bar{\omega}$ is the reflection of $\omega$ about the real axis we have $Q(\omega) = - Q(\bar{\omega})$. Indeed, $\omega \oplus \bar{\omega}$ is a loop rooted at $0$ intersecting $\N$ exactly once and it is symmetric about the real axis and so has winding number $1$ about $w_0$. Moreover, by \eqref{dischrel} $\omega$ and $\bar{\omega}$ have the same distribution. This implies that $Q(\omega) = - Q(\bar{\omega})$. Since the (reflected) paths can be coupled after the first visit to $\{2, \ldots , n\}$ we conclude that the first expectation in \eqref{phidecomp} vanishes.

If there exists a nearest neighbor path in $\bar{\squ}$ from $0$ to $w\in\partial\squ$ that does not intersect $\beta^*\cup\{0, \ldots, n\}$ except at its starting and endpoint, then any path from $0$ to $w$ that avoids $\{0, \ldots, n\}$ will intersect $\beta^*$ an even number of times, yielding a value of $Q^{0,w}=1$. Similarly, for all other $w\in\partial\squ$, $Q^{0,w}=-1$. 
We then see that,
\begin{align*}\Lambda_A(0,\ze) & = \mathbf{E}^{0,a}[Q(S[0,\rho])\,\Lambda_A(S_{\rho}, a);S_{\rho}\in\partial\squ]\\
& =\sum_{w\in\partial\squ} \mathbf{P}^{0,a}(S_\rho=w)\,Q^{0,w}\Lambda_A(w,a) \\
& = \sum_{w\in\partial\squ} H_{\p U^-}(0,w) \,\frac{H_A(w,a)}{H_A(0,a)}Q^{0,w}\,\Lambda_A(w,a).
\end{align*}  
\end{proof}
\begin{lemma}\label{lem:continuouslambda}
If $z = z_n = -n^{1/2}$, then 
\begin{align*}
\lambda_{A} (z,a) & = O(n^{-3/4}  )
+ \int_{\p V_A} h_{V^-_A}(z,w) \frac{h_{A}(w,a)}{h_{A}(z,a)}   Q^{z,w}  \lambda_{A}(w,a) \, |dw| \\
& = O(n^{-3/4}  )
+ \sum_{w \in \p U_A} h_{V^-_A}(z,w) \frac{h_{A}(w,a)}{h_{A}(z,a)} Q^{z,w}  \lambda_{A} (w,a)  . 
\end{align*}
\end{lemma}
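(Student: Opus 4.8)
The plan is to carry out in the continuum exactly the decomposition used for $\Lambda_A$ in Lemma~\ref{discretelambda}, and then to account for the extra error terms that arise because Brownian paths, unlike lattice paths, are not confined to the edges of $\ZZ^2$.

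First, by conformal invariance of Brownian motion, of $h$-processes, and of the winding functional $Q$, we compute $\lambda_A(z,a)=\lambda(f_A(z),f_A(a))$ directly in $D_A$: $\lambda_A(z,a)=\mathbf{E}^{z,a}[Q(W[0,T])]$, where $W$ is a Brownian $h$-process in $D_A$ from $z$ conditioned to exit at $a$, $T$ is the exit time, and $Q$ records the parity of the winding of $W$ about $w_0$ (equivalently, via the lift in Figure~\ref{lambda}, the sheet of the branched double cover with cut $f_A^{-1}([0,1))$ on which $W$ reaches $\partial D_A$). Let $\rho$ be the first time $W$ leaves $V_A^-$; since $V_A\subset\subset D_A$ we have $\rho<T$. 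The strong Markov property for $h$-processes (see \eqref{dischrel}) together with multiplicativity of $Q$ under concatenation (the continuum analogue of \eqref{5-31}) gives
\[
\lambda_A(z,a)=\mathbf{E}^{z,a}\big[Q(W[0,\rho])\,\lambda_A(W_\rho,a)\big].
\]

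Next, split according to whether $W_\rho\in\partial V_A$ or $W_\rho$ lies on one of the two sides of the slit $(0,n_A]$. Since $W[0,\rho]\subset\overline{V_A^-}$ and $\beta^*\cup(0,n_A]$ is a crosscut of $V_A$, the crossing parity of $\beta^*$ by $W[0,\rho]$ is determined by $z$ and $W_\rho$ and equals $Q^{z,W_\rho}$; the winding parity $Q(W[0,\rho])$ is instead the crossing parity of $f_A^{-1}([0,1))$ alone, so
\[
Q(W[0,\rho])=Q^{z,W_\rho}\cdot(-1)^{c(W[0,\rho])},
\]
where $c$ counts crossings of the short segment $(0,w_0]$. (For lattice paths no edge can cross $(0,w_0]$, so $c\equiv 0$ there and Lemma~\ref{discretelambda} is an exact identity; for Brownian paths $c$ may be odd, and this is the source of the error.) Writing the $\mathbf{P}^{z,a}$-law of $W_\rho$ through the Radon--Nikodym factor $h_A(W_\rho,a)/h_A(z,a)$ against the exit law of unconditioned Brownian motion stopped at $\rho$ --- which on $\partial V_A$ is the slit-square Poisson kernel $h_{V_A^-}(z,\cdot)$ --- the $\{W_\rho\in\partial V_A\}$ part becomes the claimed integral, up to the contribution of paths with $c$ odd. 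For the remaining $\{W_\rho\in(0,n_A]\}$ part we reflect in $\mathbb{R}$: $V_A^-$ is symmetric, $z=-n^{1/2}\in\mathbb{R}$, for $r_A$ large $f_A^{-1}([0,1))$ stays off the real axis near the slit so $\lambda_A$ is continuous across $(0,n_A]$, while $Q^{z,\cdot}$ changes sign between the two sides; the reflection preserves the $h$-process law up to time $\rho$ because a reflected pair of endpoints carries the same weight $h_A(\cdot,a)$. Thus the slit contribution cancels except for paths whose winding about $w_0$ is anomalous --- again essentially those with $c$ odd.

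It remains to show that these error contributions, and the discretization, are $O(n^{-3/4})$; I expect this to be the main obstacle. The event $\{c(W[0,\rho])\ \text{odd}\}$ forces $W[0,\rho]$ to reach within $O(1)$ of $w_0$, wind there, and still return to $\partial V_A$ without leaving $V_A^-$. Using the gambler's-ruin and Beurling-type estimates recalled in Section~\ref{sect:preliminaries} (comparing the $h$-process with unconditioned Brownian motion by Harnack, valid since $V_A\subset\subset D_A$) one controls the corresponding $\mathbf{P}^{z,a}$-measure starting from $z=-n^{1/2}$; combined with the observation that on such paths the surviving weight $Q^{z,W_\rho}\lambda_A(W_\rho,a)$ is, after undoing the $Q$-factorization, the expectation of $Q(W[0,T])\in\{\pm 1\}$ for a path that has already wound about $w_0$ --- i.e.\ a harmonic-type quantity sampled near the branch point, which is itself small --- one obtains the bound $O(n^{-3/4})$. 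Finally, $\partial V_A$ is a union of unit segments $e_w$ centred at the points $w\in\partial U_A$; on each $e_w$ the factor $\frac{h_A(\cdot,a)}{h_A(z,a)}Q^{z,\cdot}\lambda_A(\cdot,a)$ is slowly varying (we are at distance $\asymp n$ from $\partial D_A$), so replacing it by its value at $w$ and using $\int_{e_w}h_{V_A^-}(z,\cdot)\,|dw|$ as the discrete weight costs only a further $O(n^{-3/4})$, giving the second displayed formula.
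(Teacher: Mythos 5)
Your proposal is correct and follows essentially the same route as the paper: a strong Markov decomposition at the first exit time of the slit square, reflection across $\mathbb{R}$ to cancel the contribution of paths hitting the slit first, identification of the $O(n^{-3/4})$ error with paths that approach $w_0$ before reaching the positive real axis (Beurling probability $O(n^{-1/4})$ from $z=-n^{1/2}$, times the bound $|\lambda_A|=O(n^{-1/2})$ near the origin), and discretization of the boundary integral. The only place your sketch is thinner than the paper's argument is the second equality: $Q^{z,\cdot}$ is \emph{not} slowly varying at the two points of $\partial V_A$ where $\beta^*$ and the slit meet the boundary, and the Poisson-kernel comparison degrades near the four corners, so one must excise $n^{1/2}$-neighborhoods of these six exceptional points and bound their total contribution separately using the pointwise estimate $h_{V_A^-}(z,w)=O(|z|^{1/2}n^{-3/2})$, exactly as the paper does.
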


\begin{proof}  
The proof of the first expression is similar to that of the last lemma.
Note, however, since the winding is measured around $w_0$ it is possible
that when we concatenate a path from $z$ to  the positive
real axis with its reflection about the real axis we could get a
loop whose winding number about $w_0$ is even.  By topology
we can see this can only happen if the path hits $ \eta:= [0,w_0] 
\cup [0,\overline{w_0}]$  before hitting $[0,\infty)$.
By the Beurling estimate, the probability of hitting $\eta
 $ before hitting the positive real axis
is $O(|z|^{-1/2})$.  Also the value of $\lambda $ on
$\eta$  is $O(n^{-1/2})$.  Therefore, this term produces an
error of size $O(n^{-3/4})$
, which yields the first equality of the lemma.

For the second estimate we first note that the Beurling estimate and the covariance rule for the Poisson kernel show that $\forall w \in \p V$
\[
|h_{V^-}(z,w)\, \frac{h_A(w,a)}{h_A(z,a)} \,Q^{z,w} \, \lambda_{A} (w,a)| \le c |h_{V^-}(z,w)|\le c' |z|^{1/2}  n^{-3/2}.  
\]
Let $E$ be the set obtained by removing from $\p V$ its intersection with the six balls of radius $n^{1/2}$ centered at the four corners of $V$, the point at which the slit meets $\p V$, and the point at which $\beta^*$ meets $\p V$. Then by the last estimate 
\begin{multline*}
\int_{\p V} h_{V^-}(z,w) \frac{h_A(w,a)}{h_A(z,a)}   Q^{z,w}  \lambda_{A} (w,a) \, |dw| \\ = \int_{E} h_{V^-}(z,w) \frac{h_A(w,a)}{h_A(z,a)}   Q^{z,w}  \lambda_{A} (w,a) |dw| + O(|z|^{1/2} n^{-1}).
\end{multline*}
Notice that $Q^{z, w}$ is constant on each component of $E$. Derivative estimates for positive harmonic functions show that if $u,v$ are in the same component of $E$ and $|u-v|\le 1$ then 
\begin{align*}
\frac{h_A(u,a)}{h_A(z,a)}&=\frac{h_A(v,a)}{h_A(z,a)}(1+O(n^{-1})).
\end{align*}
Finally, since each point on $E$ is distance at least $n^{1/2}$ from a corner one can map to the unit disk and compare the Poisson kernels there (using, e.g., Schwarz reflection and the distortion theorem to compare the derivatives) to see that
\[
h_{V^-}(z,u)=h_{V^-}(z,v)(1+O(n^{-1/2})).
\]
These estimates imply the lemma.  
\end{proof}

We proceed to show that each of the factors in the expression for $\Lambda_A(0,a)$ in Lemma \ref{discretelambda} is close to its continuum counterpart in the decomposition of Lemma \ref{lem:continuouslambda} and then appeal to Lemma~\ref{lem:continuouslambda} to go back to the continuum function. The estimates naturally split into two parts. The first deals with fine asymptotics close to the tip of the slit in the slit square and the second with what happens between the boundary of the slit square and the boundary of $A$. We state the results here, but postpone the proofs to the two subsequent subsections. We then combine them to prove Theorem~\ref{thm:theorem4-new}.

We define
\[   \bar H_n(0,b) = \frac{H_{\p U_n^{-}}(0,b)}{
  \sum_{z \in \p U_n} H_{\p U_n^{-}}(0,z)}, \;\;\; b \in \p U_n , \]
  This is the conditional probability that an excursion
starting at $0$ in $U_n^- $ exits $\p U_n^-$ at $b$ given that
it exits at a point in $\p U_n$. We also define the analogous
quantity for Brownian motion,
\[ \bar h_n(-1,b) = \frac{h_{  V_n^{-}}(-1,b)}{
 \int_{\p V_n}  h_{V_n^{-}}(-1,w) \, |dw| }, \;\;\; b \in  \p V_n. \]
\begin{theorem}  \label{slitsquarehm} If $b \in \p U_n$,
\[       \bar H_n(0,b) =
 \bar h_n(-1,b)  \, \left[1 + O(n^{-1/20})\right].\]
\end{theorem}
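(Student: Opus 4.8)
The plan is to compare the two processes by splitting each excursion at the intermediate scale $m = \lfloor n^{1/2}\rfloor$, treating separately the part of the excursion that stays close to the tip of the slit. By the strong Markov property, if $\pi_m(w)$ denotes the probability that simple random walk from $0$, run in $U_n^-$ and conditioned to exit through $\partial U_n$, first meets $\partial U_m$ at $w$, then
\[ \bar H_n(0,b) = \sum_{w \in \partial U_m} \pi_m(w)\,\bar H_n(w,b), \]
and likewise $\bar h_n(-1,b) = \int_{\partial V_m} \tilde\pi_m(w)\,\bar h_n(w,b)\,|dw|$ for the Brownian $h$-process from $-1$. It therefore suffices to bound: (i) the discrepancy between $\bar H_n(w,\cdot)$ and $\bar h_n(w,\cdot)$ for $w\in\partial U_m$, the ``outer'' comparison of the two processes started at scale $m$; and (ii) a suitable distance between the ``inner'' laws $\pi_m$ and $\tilde\pi_m$ on the circle of radius $m$. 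One uses that $\bar h_n(w,b)=O(1/n)$ uniformly, so that after normalization both contributions produce a relative error of the order of the bounds in (i)--(ii); the points $w$ lying within distance $m^{1/2}$ of the slit or of a corner of $U_m$ carry $\pi_m$-mass only $O(m^{-1/2})$ and can be discarded.

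The outer comparison (i) I would handle with the KMT coupling of Theorem~\ref{kmt}. For $w\in\partial U_m$ not of the exceptional type just described, every relevant boundary set --- the slit $\{0,\dots,n\}$ and the outer square $\partial U_n$ --- lies at distance at least of order $m^{1/2}$, hence much larger than $\log n$, from $w$, so the coupling produces a random walk and a Brownian motion from $w$ whose paths agree to within $O(\log n)$ up to their exit times, outside an event of probability $O(n^{-\lambda})$. On the complement the two excursions are killed together or survive together and exit at nearby points, unless one of them comes within distance $O(\log n)$ of the slit or of $\partial U_n$ without hitting it, which the Beurling estimate (see \cite{Lawler_cip}) rules out except on an event of probability $O(m^{-u})$ for a suitable $u>0$. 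Combined with Theorem~\ref{MGthm} --- used to pass between discrete harmonic measure along $\partial U_n$ and the continuum Poisson kernel away from the corners --- this gives an outer discrepancy of order $m^{-u}$. The same argument shows that the conditioning factor $w\mapsto\Prob^w(S\text{ exits }U_n^-\text{ through }\partial U_n)$, a harmonic function varying on scale $n$, is the same along $\partial U_m$ for the walk and the Brownian motion up to a factor $1+O(m^{-u})$, which is what is needed to reduce (ii) to the inner comparison.

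The inner comparison (ii) is the heart of the proof, and the step I expect to be the main obstacle. It amounts to showing that the hitting law of $\partial U_m$ for simple random walk from $0$ inside the slit square $U_m^-$ coincides, up to a total variation error $O(m^{-u})$, with the hitting law of $\partial V_m$ for a Brownian excursion issued from the tip of the slit; the $h$-process from $-1$ differs from this tip excursion, after normalization, only by $O(m^{-1/2})$ because of the square-root behaviour of the Poisson kernel at the tip, as seen in Lemma~\ref{sincos} and the conformal map $z\mapsto 2\sqrt z/(z+1)$ used there. The difficulty is that near the tip --- at every scale from $O(1)$ up to any slowly growing scale --- the KMT coupling is useless, since its $O(\log)$ error is comparable to the distance from the walk to the slit, and these scales carry a non-negligible share of the harmonic mass (the same obstruction already met in Claim~3 of the proof of Lemma~\ref{lemma:loop-coupling}). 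This forces a genuinely discrete study of the slit square near the tip: I would write the hitting distribution on $\partial U_m$ by a last-exit decomposition and reduce it to sharp asymptotics, with polynomial error, for the Beurling-type escape probabilities $\Prob^z(S\text{ reaches }\partial U_m\text{ before the slit})$ and the associated escape distributions, for $z$ ranging over points near the slit. These escape estimates --- announced in the introduction and obtained with the help of \cite{KL} and \cite{BJK} --- are the real technical content and are proved in the following subsection. Matching the resulting discrete expression against the explicit continuum quantity coming from the conformal map that opens the slit then gives $O(m^{-u})$, and hence the bound in the statement; the exponent $1/20$ is a convenient value inherited from those estimates and is not claimed to be optimal.
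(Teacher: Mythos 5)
Your decomposition at the intermediate scale $m=\lfloor n^{1/2}\rfloor$ does not close: the ``inner comparison'' (ii), which you yourself flag as the heart of the matter, is essentially the statement of the theorem again at scale $m$. The law $\pi_m$ is the normalized hitting distribution of $\partial U_m$ for the walk started at the tip of the slit, and asking that it agree with the Brownian counterpart $\tilde\pi_m$ up to total variation $O(m^{-u})$ is the same kind of assertion as $\bar H_m(0,\cdot)=\bar h_m(-1,\cdot)\,[1+O(m^{-u})]$ --- you have reduced the problem to itself at a smaller scale, with no base case and no mechanism to control error accumulation if one tried to iterate. Your closing remarks (``last-exit decomposition \ldots Beurling-type escape probabilities \ldots proved in the following subsection'') gesture in the right direction but defer precisely the step that needs proving; as written, the argument has a genuine gap there. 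There is also a structural reason why a pathwise/coupling attack on the inner part is doomed: near the tip the discrete and continuum objects do not converge pointwise at any fixed lattice distance, only their normalized ratios do, so no coupling at scales $O(1)$ up to $n^{o(1)}$ can deliver the total variation bound you need.

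The paper avoids this entirely by a different organization. It performs a \emph{last-exit} decomposition through a cross-section at macroscopic distance ($m=\lfloor 3n/4\rfloor$), writing $H_{\partial U^-}(0,b)=\sum_j G_{U_n^-}(0,m+ji)\,H_{\partial R}(m+ji,b)$ with $R$ a corner rectangle. The two ingredients are then: (a) an \emph{exact} Fourier (separation of variables) computation comparing the discrete and continuum Poisson kernels in the rectangle $R$, with polynomial error (Lemma~\ref{jan10.lemma1}); and (b) Lemma~\ref{01-22-14}, which uses equation (40) of \cite{KL} together with explicit conformal maps of the slit square to show that $G_{U_n^-}(0,\zeta)=c_n\,g_{V_n^-}(-1,\zeta)\,[1+O(n^{-1/20})]$ for $|\zeta|\ge n/2$. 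The crucial point is that all the intractable near-tip behavior is absorbed into the single constant $c_n$, which appears identically in numerator and denominator of $\bar H_n$ and cancels. In other words, the normalization in the statement is not cosmetic --- it is what makes the proof possible, and your proposal does not exploit it. If you want to salvage your outline, replace the first-passage decomposition at scale $n^{1/2}$ by a last-exit decomposition at scale comparable to $n$, and prove only a \emph{ratio} estimate for the slit-square Green's function between two macroscopic points, rather than sharp asymptotics for the hitting law near the tip.
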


\begin{proof}
See Section~\ref{slitsquarethmsection}. 
\end{proof}

We do not believe that this error term is optimal, but
this suffices for our needs and  is all that we will prove.  We
will also need the following corollary, which in particular implies the sharpness of the Beurling estimate. 
 
\begin{corollary}
There exist $c,u>0$ such that
\[     \sum_{b \in \p U_n}  H_{\p U_n^{-}}(0,b)  =
    c \, n^{-1/2} \, [1 + O(n^{-u})]. \]
\end{corollary}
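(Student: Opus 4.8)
The plan is to reduce the corollary to the explicit scaling of the Brownian escape mass from the tip of a slit square, using Theorem~\ref{slitsquarehm} to transfer the discrete quantity to its continuum counterpart and Beurling-type estimates to control the excursions that stay within $O(1)$ of the tip. Write $q_n := \sum_{b \in \partial U_n} H_{\partial U_n^-}(0,b)$; this is the total $p$-mass of random walk excursions from $0$ (with first step into $U_n^-$) that exit $U_n^-$ through the outer boundary $\partial U_n$ rather than through the slit. The Beurling estimate already gives $q_n = O(n^{-1/2})$, so the content is to upgrade this to a sharp asymptotic with a polynomial rate. First I would fix one boundary edge $b^\ast \in \partial U_n$ at bounded distance from the midpoint of the side $\{x = -n\}$, so that $b^\ast$ is at macroscopic distance from the slit and from all four corners, and write
\[
q_n = \frac{H_{\partial U_n^-}(0,b^\ast)}{\bar H_n(0,b^\ast)} .
\]
By Theorem~\ref{slitsquarehm} the denominator satisfies $\bar H_n(0,b^\ast) = \bar h_n(-1,b^\ast)\,[1 + O(n^{-1/20})]$, so it remains to pin down $\bar h_n(-1,b^\ast)$ and $H_{\partial U_n^-}(0,b^\ast)$ separately.

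The Brownian factor is explicit. Using the Poisson kernel scaling $h_{V_n^-}(-\epsilon,w) = n^{-1}\,h_{V_1^-}(-\epsilon/n,\,w/n)$ together with the square-root blow-up of the slit-domain Poisson kernel at the tip --- the computation already performed in the proof of Lemma~\ref{sincos}, where $z \mapsto 2\sqrt z/(z+1)$ maps the slit disk onto $\HH$, composed with a fixed conformal identification of $V_1^-$ with the slit disk --- one obtains $h_{V_n^-}(-1,w) = n^{-3/2}\,\tilde g(w/n)\,[1 + O(n^{-1})]$ uniformly for $w$ at macroscopic distance from the corners and the slit, for an explicit function $\tilde g$, and hence $\tilde q_n := \int_{\partial V_n} h_{V_n^-}(-1,w)\,|dw| = \tilde c\, n^{-1/2}\,[1 + O(n^{-1})]$ for an explicit universal constant $\tilde c > 0$. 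Consequently $\bar h_n(-1,b^\ast) = h_{V_n^-}(-1,b^\ast)/\tilde q_n = \tilde c^{-1}\,\tilde g(b^\ast/n)\, n^{-1}\,[1 + O(n^{-1})]$, where $\tilde g(b^\ast/n)$ is a fixed positive number.

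For the discrete factor $H_{\partial U_n^-}(0,b^\ast)$ I would decompose the excursion from $0$ to $b^\ast$ at its visits to the circle $\partial B(m)$ with $m = \lfloor n^{1/10}\rfloor$. Up to error terms from excursions that cross $\partial B(m)$ repeatedly, which are negligible by Beurling, this factors into a genuinely lattice-dependent ``near-tip'' mass $\kappa_n$ (the mass of excursions from $0$ reaching $\partial B(m)$ without hitting the slit), an exit law on $\partial B(m)$, and a macroscopic passage from $\partial B(m)$ to $b^\ast$ inside $U_n^-$. The macroscopic passage is comparable, up to $1 + O(n^{-u})$, to the corresponding Brownian passage --- here one invokes strong approximation (Theorem~\ref{kmt}) and the discrete-to-continuum Poisson kernel estimates already recorded (Theorems~\ref{MGthm} and~\ref{slitsquarehm}), applied inside the slit square away from its rough points --- and by the scaling above the Brownian passage equals $m^{1/2}\,n^{-3/2}$ times a constant; excursions that come within $c\log n$ of the slit without hitting it, or that linger near a corner or near the tip on the wrong scale, are negligible by the Beurling-type arguments from the proof of Lemma~\ref{lemma:loop-coupling}, and these are what produce the polynomial rate. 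Comparing the decomposition at two dyadic scales and reusing the Brownian scaling shows that $\kappa_n m^{-1/2}$ times the Brownian far-field constant has a limit $\mathfrak c$ independent of the choice of intermediate scale, so that $H_{\partial U_n^-}(0,b^\ast) = \mathfrak c\, n^{-3/2}\,[1 + O(n^{-u})]$.

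Combining the three estimates yields
\[
q_n = \frac{\mathfrak c\, n^{-3/2}}{\tilde c^{-1}\,\tilde g(b^\ast/n)\, n^{-1}}\,[1 + O(n^{-u})] = c\, n^{-1/2}\,[1 + O(n^{-u})], \qquad c = \frac{\mathfrak c\,\tilde c}{\tilde g(b^\ast/n)} > 0 ;
\]
since $q_n$ is independent of $b^\ast$, so is $c$, and the positivity of $c$ is precisely the sharpness of the Beurling estimate mentioned in the statement. I expect the only real difficulty to be the analysis of the $O(1)$-neighbourhood of the slit tip: the walk's behaviour there is lattice-dependent, has no closed form, and is not conformally natural, so the argument must be set up so that this contribution factors out of the macroscopic estimate cleanly and with a polynomially small error --- which is exactly the purpose of the intermediate scale $m$ and the Beurling bounds.
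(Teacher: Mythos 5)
Your outline is workable and its decisive step is the same renormalization the paper uses, but you reach it by a detour that costs more than it buys. The reduction $q_n = H_{\p U_n^{-}}(0,b^\ast)/\bar H_n(0,b^\ast)$, followed by Theorem~\ref{slitsquarehm} and the explicit computation of $\bar h_n(-1,b^\ast)$, does not actually shrink the problem: since $H_{\p U_n^{-}}(0,b^\ast) = q_n\,\bar H_n(0,b^\ast)$ by definition, the sharp asymptotic $H_{\p U_n^{-}}(0,b^\ast)=\mathfrak c\,n^{-3/2}[1+O(n^{-u})]$ that you then set out to prove is equivalent to the corollary itself, so all of the content survives into your one sentence ``comparing the decomposition at two dyadic scales and reusing the Brownian scaling shows that [the normalized near-tip mass] has a limit.'' That sentence \emph{is} the paper's proof: the paper sets $K(n)=\sum_{b}H_{\p U_n^{-}}(0,b)$ and $\tilde K(n)=\int_{\p V_n}h_{V_n^{-}}(-1,w)\,|dw|$, uses the exact first-passage factorizations $K(rn)=K(n)\sum_b \bar H_n(0,b)\,H_{U_{rn}^-}(b,\p U_{rn})$ and the Brownian analogue, deduces $K(rn)/K(n)=\tilde K(rn)/\tilde K(n)+O(n^{-u})=r^{-1/2}+O(n^{-u})$ for $1\le r\le 2$ from Theorem~\ref{slitsquarehm} together with strong approximation, and then telescopes the summable multiplicative errors over dyadic scales. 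Applying that cascade directly to the total mass $K(n)=q_n$ dispenses with the fixed boundary point $b^\ast$, the intermediate scale $n^{1/10}$, and the separate near-tip mass $\kappa$. If you do keep your structure, two points need care: the decomposition at $\p B(m)$ should be an exact first-passage decomposition (there are then no ``repeated crossing'' error terms to wave away, only the re-entries of the continuation, which are part of the macroscopic passage probability); and the comparison of that macroscopic passage to its Brownian counterpart cannot rest on the KMT coupling alone, since the coupling does not decide whether the walk, as opposed to the Brownian path, hits the slit when both come within $O(\log n)$ of it --- you need the conditional-exit-law statement of Theorem~\ref{slitsquarehm} (now at scale $m$) and a Beurling bound on near-misses of the slit, exactly as in the total-mass argument.
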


\begin{proof}   Let
\[  K(n) =  \sum_{b \in \p U_n}  H_{\p U_n^{-}}(0,b)  
 , \]
    \[ \tilde K(n) =  \int_{\p V_n}  h_{V_n^{-}}(-1,w) \, |dw|.\]
  and note that for $r \ge 1$ (we write $U_{rn} = U_{\lfloor rn \rfloor}$)
 \[   K(rn)  = K(n)   \sum_{b \in \p U_n}  \overline H_{n}(0,b)\,
       H_{U_{rn}^-}(b,\p U_{rn}) , \]
   \[  \tilde K(rn) = \tilde K(n) 
        \int_{\p V_n}  \overline h_{n}(-1,w) \, \hm_{V_{rn}^-}(w,
            \p V_{rn})\, |dw|.\]
      Using the previous theorem and  strong approximation,
   we can see that for $1 \leq r \leq 2$,
  \[  \frac{K(rn)}{K(n)} = \frac{\tilde K(rn)}{\tilde K(n)}
       + O(n^{-u}). \]
   By direct calculation using conformal mapping,
   \[       \frac{\tilde K(rn)}{\tilde K(n)} = r^{-1/2} \, +O(n^{-u}).\]
Therefore, allowing a different $u$,
\[    \frac{K(rn)}{K(n)}  =  r^{-1/2} \, +O(n^{-u}),\]
and the result easily follows from this. 
\end{proof}

  Given the corollary we can restate Theorem 
  \ref{slitsquarehm} as: there exists $c >0, u > 0$ such that
\begin{equation}  \label{jan11.1}
   H_{\p U_n^-}(0,b) =c\, 
 h_{V_n^-}(-1,b)  \, \left[1 + O(n^{-u})\right].
 \end{equation}
    We will also need that
\begin{equation}  \label{jan11.2}
\frac{H_A(w,a)}{H_A(0,a)}=
\frac{h_{A}(w,a)}{h_{A}(0,a)} + O(n^{-u}) ,\;\;\;\;
  w \in \p U_n^- , \;\;\; a \in\p A
  \end{equation}
which is a direct consequence of Theorem~\ref{MGthm}.
This handles the first part of the argument of Theorem \ref{thm:theorem4-new}. The second
part is handled in the next proposition.  Note that
for $w \in \p U_A$ 
 the quantities $\Lambda_A(w,a)$
and $\lambda_A(w,a)$ are ``macroscopic'' quantities for which
one would expect Brownian motion and random walk to give
almost the same value.  
\begin{proposition}\label{approxaftersquare} There exist $u>0, c < \infty$ such that if 
  $A\in \mathcal{A}$, $a \in \p A$, and $w\in\partial\squ_A$,
\begin{equation}\label{diffoutside}
|\Lambda_A(w,a)-\lambda_{A}(w,a)|\leq c \, r_A^{-u}.
\end{equation}
\end{proposition}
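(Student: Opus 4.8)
The plan is to compare the two observables through a coupling of the conditioned walk and the conditioned Brownian motion, exploiting that for $w\in\partial U_A$ both $\Lambda_A(w,a)=\mathbf{E}^{w,a}[Q(S[0,\tau])I]$ (see \eqref{def:f}) and its continuum analog $\lambda_A(w,a)=\mathbf{E}^{w,a}[Q]$ are ``macroscopic'' quantities. The only delicate region is a neighbourhood of the branch point $w_0$, where the winding parity $Q$ can flip and where the walk may meet $\{0,1\}$; away from $w_0$ the KMT coupling forces the two quantities together.

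First I would set up the coupling. Since $\partial U_A$ sits at distance $\asymp r_A$ from both $w_0$ and $\partial A$, and $\mathcal{B}(w_0,r_A/4)\subset A$ by Koebe, every point used below lies in the bulk region $\{r_A^{1/2}\le |z-w_0|\le r_A/8\}\subset A$; there Theorem~\ref{MGthm} gives $H_A(\cdot,a)=h_A(\cdot,a)[1+O(r_A^{-u})]$ and Harnack makes these comparable along dyadic annuli. Using Theorem~\ref{kmt} for the unconditioned processes together with the Radon--Nikodym change of measure controlled by Theorem~\ref{MGthm} (cf.\ the coupling arguments of \cite{KL}), one couples the discrete and continuum $h$-processes so that, off an event of probability $O(r_A^{-u})$, the two paths stay within $c\log r_A$ of one another until the first time either one leaves $A$ or enters $\mathcal{B}(w_0,r_A^{1/2})$.

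The core is a multi-scale parity-cancellation bootstrap. Put $D(z,a)=\Lambda_A(z,a)-\lambda_A(z,a)$ and $\phi(M)=\sup\{|D(z,a)|:z\in A,\ r_A^{1/2}\le|z-w_0|\le M,\ a\in\partial A\}$. Fix $z$ with $|z-w_0|=M\in[2r_A^{1/2},r_A/8]$ and split the path at $\sigma=$ first hitting time of $\partial\mathcal{B}(w_0,M/2)$. On $\{\sigma>\tau\}$ the coupled paths never come within $\asymp\log r_A$ of $w_0$, so the walk avoids $\{0,1\}$ and the two paths carry the same winding number about $w_0$; this contributes equally to the two observables up to the $O(r_A^{-u})$ coupling error. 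On $\{\sigma<\tau\}$, write $Q=Q_0Q_1$ with $Q_0,Q_1$ the winding parities before and after $\sigma$ (see \eqref{5-31}); since $I=1$ up to $\sigma$, the strong Markov property of the $h$-process gives $\mathbf{E}^{z,a}[QI;\sigma<\tau]=\sum_{z'}\mathbf{P}^{z,a}(S_\sigma=z',\sigma<\tau)\,\varepsilon_{z'}\,\Lambda_A(z',a)$, where $\varepsilon_{z'}=\mathbf{E}^{z,a}[Q_0\mid S_\sigma=z',\sigma<\tau]$. Conditioning on the last visit to $\partial\mathcal{B}(w_0,M)$ before $\sigma$, the final annular crossing has uniformly positive probability to realise either parity --- a harmonic-measure separation estimate in an annulus of bounded modulus, exactly the mechanism of Claim~3 in the proof of Lemma~\ref{lemma:loop-coupling} --- so $|\varepsilon_{z'}|\le 1-2c$, and likewise for the Brownian analog $\tilde\varepsilon_{z'}$. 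Matching the entrance points $S_\sigma\approx W_{\tilde\sigma}$ through the coupling, using H\"older continuity of $\lambda_A$ and of $\varepsilon$ at the macroscopic scale $M/2$, and the closeness of the discrete and continuum entrance laws, one obtains $\phi(M)\le(1-2c)\,\phi(M/2)+C\,r_A^{-u}$. Iterating from $M=r_A/8$ down to $M=r_A^{1/2}$ ($\asymp\log r_A$ steps) with the trivial bound $\phi(r_A^{1/2})\le 2$ gives $\phi(r_A/8)\le C\,r_A^{-u'}$, hence \eqref{diffoutside}.

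The main obstacle is precisely this near-branch-point analysis: both observables are order one even deep inside $A$, so the power-law error cannot come from discarding paths that approach $w_0$ --- it must be extracted from the parity-flip bound $|\varepsilon_{z'}|\le 1-2c$ compounded over $\asymp\log r_A$ dyadic scales, which also forces one to verify that this bound, the entrance distributions, and the partial winding parities are all stable under the discrete-to-continuum coupling annulus by annulus. A secondary technical point is transferring the KMT coupling from the unconditioned processes to the $h$-processes, handled by Theorem~\ref{MGthm}. The remaining ingredients --- Beurling estimates to control the mismatches where one path is near a circle or the slit while the other is not, and H\"older regularity of $\lambda_A$ away from $w_0$ and $\partial A$ --- are routine.
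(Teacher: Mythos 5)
Your coupling framework (KMT for the $h$-processes via a Radon--Nikodym change of measure, with the neighbourhood of $w_0$ isolated as the delicate region) matches the paper's, which proves exactly such a statement as Theorem~\ref{kmthproc}. But the justification you give for the multi-scale parity-cancellation bootstrap rests on a false premise. You assert that ``both observables are order one even deep inside $A$, so the power-law error cannot come from discarding paths that approach $w_0$.'' In fact \eqref{lamb} and the Beurling-type computation behind Lemma~\ref{sincos} give $|\lambda_A(z,a)|\le c\,r_A^{-1/2}|z|^{1/2}$ (this is \eqref{02-08}), so on the disk of radius $n^{1/2}$ about the origin the continuum observable is $O(n^{-1/4})$, and the same holds for $\Lambda_A$ by the reflection argument of Lemma~\ref{discretelambda} together with a discrete Beurling estimate. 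This is precisely the simplification the paper exploits: writing $\xi,\zeta$ for the hitting times of that small disk by $B$ and $S$, the strong Markov property gives $|\E^{z,a}[Q^B;\xi<T]|\le\sup_{|z'|\le n^{1/2}}|\lambda_A(z',a)|=O(n^{-1/4})$ and similarly for the walk; on the complementary event, under the coupling the two paths stay $c\log n$--close and away from $w_0$, so $Q^B=Q^S I_a$ exactly; and the mismatch events ($\xi<T$ but $\zeta>\tau$, or vice versa) cost only $O(\log n/n^{1/2})$ by gambler's ruin in the annulus of width $c\log n$. No parity-cancellation across dyadic scales is needed.

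Beyond the misdiagnosis, your bootstrap as written has its own problems. With $\phi(M)$ defined as a supremum over the whole region $r_A^{1/2}\le|z-w_0|\le M$, the recursion $\phi(M)\le(1-2c)\phi(M/2)+Cr_A^{-u}$ cannot hold: for $z$ just outside the circle of radius $M/2$ the path reaches that circle after an arbitrarily short crossing, so the partial parity satisfies $\varepsilon_{z'}\approx 1$ and there is no contraction (the recursion also contradicts the monotonicity $\phi(M)\ge\phi(M/2)$ unless the conclusion already holds). One would have to restate it circle-to-circle, and then still prove the uniform single-annulus bound $|\varepsilon_{z'}|\le 1-2c$ for an $h$-process conditioned on its entrance point --- the mechanism of Claim~3 of Lemma~\ref{lemma:loop-coupling} is set up for a different purpose (equidistribution of parity after $\asymp\log r$ crossings) and does not directly yield this --- as well as the annulus-by-annulus stability of entrance laws and partial parities under the discrete-to-continuum coupling, which is a substantial amount of unproven work. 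All of this is avoided by using \eqref{02-08}.
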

\begin{proof}
See Section~\ref{approxaftersquaresection}. 
\end{proof}
\begin{proof}[Theorem~\ref{thm:theorem4-new} assuming Theorem~\ref{slitsquarehm} and Proposition~\ref{approxaftersquare}.]
 We will first show \eqref{t41}, that is, 
  there is a constant $0 < c_2 < \infty$ such that 
\begin{equation*}  
\Lambda_A(0,a) =
     c_2 r_A^{-1/2}\, \left[ \sin \theta_a  + O\left(
r_A^{-u} \right) \right].
    \end{equation*} 
    We write $U,U^-,V,V^-,\bar H, \bar h$ for
    $U_A, U^-_A,V_A,V^-_A, \bar H_{n_A}, \bar h_{n_A}$.
Using  Lemma~\ref{discretelambda}, \eqref{jan11.1},  
\eqref{jan11.2}, and \eqref{diffoutside} we see that there is a constant $0 < c_0 < \infty$ such that
\begin{align*}\Lambda_A(0,a) & =\sum_{w\in\partial\squ} H_{\p U^-}(0,w) \frac{H_A(w,a)}{H_A(0,a)} Q^{z,w}\Lambda_A(w,a) \\ 
& = c_0 \sum_{w\in\partial\squ}  h_{V^-}(-1,w) \, \frac{h_{A}(w,a)}{h_{A}(0,a)}\, Q^{z,w}\, \lambda_A(w,a)\left[1+O(r_A^{-u}) \right].
\end{align*}
We note that if $f_A: D_A \rightarrow \DD$ with $f_A(w_0) = 0,
f_A'(w_0) > 0$, then for
 for $z$ with $|z|\leq n_A^{1/2}$, 
\[   f_A(z)  =  r_A^{-1} \, z + O(r_A^{-1}), \]
and hence, by \eqref{lambdaA} and Lemma \ref{sincos},
\begin{equation}
  \label{jan11.6}
       \lambda_A(-n_A^{1/2},a) =  2\, r_A^{-1/2} \, n_A^{1/4}
 \, \sin \theta_a + O(r_A^{-1/2})
   \end{equation}
and there exists some $c>0$ such that for all $z$ with $|z|\leq n_A^{1/2}$, 
\begin{equation}\label{02-08}
       \lambda_A(z,a) \leq  c\, r_A^{-1/2} \, |z|^{1/2}.
 \end{equation}

A simple argument, using conformal mapping say,
shows that
\[
   h_{V^-} (-n_A^{ 1/2},w)    =  n_A^{1/4} \, h_{V^-}(-1,w) 
\, \left[1+O(n^{-1/4})\right] .\]
We now use Lemma \ref{lem:continuouslambda}
and \eqref{jan11.6}
 to conclude that
\begin{eqnarray*}
 n_A^{1/4} \, c_0^{-1} \, \Lambda_A(0,a) & = & O(r_A^{-3/4})
     + \lambda_A(-n_A^{1/2},a) \, \left[1 + O(n^{-u})\right] 
\\
     & = & 
    2\, r_A^{-1/2}n_A^{1/4} \,  \left[ \sin \theta_a + O(n^{-u})\right] .
     \end{eqnarray*}
  Since $n_A =  \epsilon \, r_A +O(1)$, we get \eqref{t41}. 
  We will give
a symmetry argument here 
  to deduce \eqref{t42}  from \eqref{t41}. 
Suppose we replace $J_t$ with 
\[  \tilde J_t  = \left\lfloor \frac{\Theta_t + \pi}{2 \pi} \right\rfloor
   - \left\lfloor \frac{\Theta_0 + \pi}{2 \pi}\right \rfloor. \]  In other words,
 we place the discontinuity of the argument at $ 
 f^{-1}_A((-1,0]) $ rather than at $  f^{-1}_A([0,1))$.  Then
 we can see that if $S_\tau = a $, then
$\tilde Q(\omega[0,\tau]) = \pm Q(\omega[0,\tau])$ 
with the minus sign appearing if and only 
if $\pi/2 \leq \theta_a < \pi$.  
 
 Now given $A$, consider its reflection along
 the line $\{\Re(z) = 1/2\}$, that is, let 
 $\rho(z) = 1 - \overline z$, $A' = \rho(A)$.    
   Let $a' = \rho(a)$ and define $\theta'$
 by $f_{A'}(a') = e^{i2\theta'}$.   Note that $
 \rho(D_{A'}) = D_A$, $f_{A'}(z)
   = -\overline {f_A(\rho(z))}$, $f_A^{-1}([0,1))
     = \rho\left[f_{A'}^{-1}((-1,0])\right],$ and
 \[ f_A(a) = - \overline{f_{A'}(a')}  =
    -\overline{e^{2i\theta'}} =
      \exp \left\{2i\left(\frac{\pi}{ 2} - \theta'\right)\right\}. \]
In other words, $\theta_{a} = \frac \pi 2 -
\theta' \, ({\rm mod} \,\pi)$.  If $\theta_a,\theta' \in [0,\pi)$, then
$\theta_a = \frac \pi 2 - \theta'$ if $0 \leq \theta'  \leq \frac \pi 2$
and $\theta_a = \frac{3\pi}{2} - \theta'$  if $\frac{\pi}{2}
          < \theta' < \pi$, 
       and hence
       \[ \cos \theta_a = \left\{ \begin{array}{ll}
   \sin \theta' & 0 \leq \theta' \leq \pi/2 \\
    -\sin \theta' & \pi/2 < \theta'
      < \pi . \end{array} \right.\]
        From the previous paragraph and \eqref{t41},
         we see that
\[    \Lambda (1,a) = \Lambda_{A'}(0,a')  
= c_2 r_A^{-1/2}\, \left[ \sin  \theta'   + O\left(r_A^{-u}\right)\right] , \;\;\;\;
   0 \leq \theta' \leq  \frac \pi 2 , \]
   \[    \Lambda (1,a) = -\Lambda_{A'}(0,a')  
= c_2 r_A^{-1/2}\, \left[ -\sin  \theta'   + O\left(r_A^{-u}\right) \right] , \;\;\;\;
   \frac \pi 2 < \theta < \pi . \]
 \end{proof}

\subsection{Poisson kernel convergence in the slit square: proof of Theorem \ref{slitsquarehm}}  \label{slitsquarethmsection}

The rate of convergence of the discrete Poisson kernel to
the continuous kernel is very fast in the case of rectangles aligned
with the coordinate axes.

\begin{lemma}  \label{jan10.lemma1}
There exists $ c < \infty$ such that if 
$n/10 \leq m \leq 10 n$, and
\[ A = A(n,m)  = \{j+ik : 1 \leq j \leq n - 1,
  1 \leq k \leq m-1\} , \]\[     R=R(n,m) = \{x+iy: 0 < x < n,
    0 < y < m\}, \] then     
    \[   \left|4\, h_{\p R}(ik, n + i k') - H_{\p A}
       (ik, n+i k')\right|  \leq c \, \frac{\min\{k, m-k\}\min\{k', m-k'\}}{{n^5} }. \]
In particular,
\begin{equation*}\label{relative}
H_{\p A} (ik, n+i k') = 4\, h_{\p R}(ik, n + i k') (1+\bigo{n^{-2}}).
\end{equation*} 
   \end{lemma}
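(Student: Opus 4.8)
The plan is to exploit the fact that for an axis-aligned rectangle both the discrete and continuous Poisson kernels on the right edge, evaluated at a point on the left edge, admit an explicit Fourier expansion in the vertical variable, and to compare these two expansions term by term. First I would recall the exact eigenfunction expansion of the continuum Poisson kernel: on $R=R(n,m)$ the function $h_R(\,\cdot\,,n+ik')$ restricted to the left edge $\{x=0\}$ is
\[
h_{\p R}(ik,n+ik') = \frac{2}{m}\sum_{j=1}^{\infty} \frac{\sin(\pi j k/m)\sin(\pi j k'/m)}{\sinh(\pi j n/m)}\,\frac{\pi j}{m},
\]
up to a standard normalization of the boundary derivative; concretely, writing $g_R$ for the Green's function one has $g_R(x+iy,\,\cdot\,)=\sum_j c_j \sin(\pi j y/m)\sinh(\pi j x/m)\sinh(\pi j(n-x')/m)/\sinh(\pi j n/m)$ and the edge Poisson kernel is the appropriate normal derivative. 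On the discrete side, $H_{\p A}(ik,n+ik') = \tfrac14 G_A(ik,(n-1)+ik')$ by the last-exit identity \eqref{lastexit}, and $G_A$ on the rectangle $A(n,m)$ has the analogous discrete Fourier expansion in $\sin(\pi j k/m)$ with hyperbolic-type factors replaced by their discrete analogues (products of $\lambda_j,\lambda_j^{-1}$ where $\lambda_j$ solves the discrete characteristic equation $\lambda+\lambda^{-1}=4-2\cos(\pi j/m)$).

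The key step is then a term-by-term comparison. For each fixed $j$ the discrete eigenvalue $\lambda_j$ satisfies $\log\lambda_j = \pi j/m + O((j/m)^3)$ for $j \le m/2$, so the discrete factor $\lambda_j^{-n}(1-\lambda_j^{-2(n-1)})$ agrees with $2\sinh(\pi j n/m)^{-1}$-type quantities up to relative error $O((j/m)^2 n)$ — which is only useful for small $j$. The cleaner route, and the one I would actually carry out, is to note that both kernels decay geometrically in $j$: the $j$-th term carries a factor $e^{-\pi j n/m}$ (continuum) or $\lambda_j^{-n}$ (discrete), both of which are $\le e^{-c j}$ since $n/m$ is bounded below. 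Hence only the first $O(\log n)$ terms matter up to an error $O(n^{-10})$, say, and for those terms one gets the sharp comparison. The factors $\sin(\pi j k/m)$ and $\sin(\pi j k'/m)$ contribute the claimed $\min\{k,m-k\}\min\{k',m-k'\}/n^2$-type factor because $|\sin(\pi j k/m)| \le \pi j \min\{k,m-k\}/m$ and the surviving range of $j$ is bounded; combined with the overall $1/m \asymp 1/n$ normalization and one more power from the derivative this yields the stated $n^{-5}$ bound with a $\min\{k,m-k\}\min\{k',m-k'\}$ numerator. An alternative, more robust but less self-contained, approach would be to invoke the separation-of-variables/reflection structure directly: extend both kernels by odd reflection across the top and bottom edges to periodic functions and use the fact that the discrete and continuum heat (or resolvent) kernels on $\mathbb{Z}$ versus $\mathbb{R}$ agree to all polynomial orders away from the diagonal.

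The ``In particular'' statement follows immediately: since $\min\{k,m-k\}\le m \asymp n$ and $\min\{k',m-k'\}\le n$, the absolute error is $O(n^{-3})$, while $h_{\p R}(ik,n+ik')$ itself is of order at least $\min\{k,m-k\}\min\{k',m-k'\} e^{-\pi n/m}/n^2 \gtrsim$ the same numerator times $n^{-2}$ for the dominant $j=1$ mode (when $k,k'$ are not too close to $0$ or $m$), and more care for $k$ or $k'$ near the corners shows the relative error is $O(n^{-2})$ uniformly. The main obstacle I anticipate is bookkeeping the corner regime where $k$ or $k'$ is $O(1)$: there the leading term is small, so one must check that the error bound (which also carries the $\min$ factors) shrinks proportionally — this is exactly why the lemma is phrased with the $\min\{k,m-k\}\min\{k',m-k'\}$ numerator rather than a flat $n^{-5}$, and verifying that the error genuinely inherits both $\min$ factors (not just one) from the sine factors requires being careful that the comparison of the $j$-dependent hyperbolic factors does not reintroduce a large constant. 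Everything else is routine Fourier analysis on the rectangle.
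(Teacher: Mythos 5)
Your proposal follows essentially the same route as the paper's proof: exact separation-of-variables expansions for both kernels, the eigenvalue comparison $\alpha_l = \frac{ln}{m}\,[1+O(l^2/n^2)]$ (equivalently $\log\lambda_l = \pi l/m + O((l/m)^3)$), geometric decay in the mode index to truncate the sums to $l \lesssim \log n$, and the bound $|\sin(\pi l k/m)| \le \pi l \min\{k,m-k\}/m$ to produce the $\min$ factors in the numerator. The only real difference is presentational: the paper compares the two kernels at the interior point $1+ik$, where the discrete identity $H_{\p A}(ik,w) = \frac14 H_A(1+ik,w)$ is exact, and then passes from $h_R(1+ik,w)$ to $h_{\p R}(ik,w)$ by a Taylor expansion whose error is controlled via Schwarz reflection and derivative estimates for harmonic functions, rather than expanding the boundary kernel $h_{\p R}$ directly as you do.
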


\begin{proof}
We write $ v=ik, w= n + i k'$, and $d= \min\{k, m-k\}, d' = \min\{k', m-k'\}$. 
Using separation of variables (see Section 6 of \cite{benes_rw} or \cite[Chapter 8]{lawler_limic}
for more details), one can find $h_R(1+ ik,
  w)$ exactly as an infinite series
  \[   h_R(1+ik,w) = \frac 2 m \sum_{l=1}^\infty
    \frac{\sinh(l  \pi/m)}
        {\sinh (  l\pi n/m) }\, \sin\left(\frac{lk\pi}{m}
        \right) \, \sin \left(\frac{l k' \pi}{m} \right).\]
Similarly one can find the discrete
Poisson kernel by separation of variable as  a finite Fourier series; more specifically,
\[     H_{A}(1+ ik, w) =
    \frac{2}m\sum_{l = 1}^{m-1} \frac{\sinh(\alpha_l \pi/n)}
        {\sinh (\alpha_l  \pi) }\, \sin\left(\frac{lk\pi}{m}
        \right) \, \sin \left(\frac{l k' \pi}{m} \right) , \]
  where $\alpha _l$ is the solution to
  \[    \cosh\left(\frac{\alpha_l \pi}{n} \right)
     + \cos\left(\frac{l \pi}{m} \right) = 2 . \]
   Note that Lemma 6.1 in \cite{benes_rw} implies that
\begin{equation}  \label{sept19.1}
  \alpha_l = \frac{ln}{m} \, \left[ 1 + O(l^2/n^2) \right]. 
  \end{equation}
One can find $c_1>0$ and $c_2>0$ such that for all $m,n$ in the statement of the lemma, 
$$  \frac{\sinh(l  \pi/m)}
        {\sinh (  l\pi n/m) }\leq c_1e^{-c_2 l} \text{ and }  \frac{\sinh(\alpha_l \pi/n)}
        {\sinh (\alpha_l  \pi) }\leq c_1e^{-c_2 l} .$$

Using
this  and the inequality $|\sin x| \leq |x|$,
we can see that there exists  $c < \infty$ such that
  \[  |h_R(1 + ik,w) - H_A(1 + ik,w) |   \leq  \hspace{1.8in} \]
  \[  c \left[\frac{1}{n^6}  +\frac{d d'}{n^3} \sum_{l \leq c \, \log n}
         l^2  \left| \frac{\sinh(l  \pi/m)}
        {\sinh (  l\pi n/m) } - \frac{\sinh(\alpha_l \pi/n)}
        {\sinh (\alpha_l  \pi) }\right| \right]. \]
Note that the 6 is arbitrary and can be made arbitrarily large by increasing $c$.  Using \eqref{sept19.1} we can see that for $l \leq c \log n$,
   \[   \frac{\sinh(\alpha_l \pi/n)}
        {\sinh (\alpha_l  \pi) } = \frac{\sinh(l  \pi/m)}
        {\sinh (  l\pi n/m) }\, \left[ 1 + O(l^3/n^2) \right], \]
  and hence   the sum is $O(n^{-3})$
giving
$$
      |h_R(1 + ik,w) - H_A(1 + ik,w) |   \leq \frac{c dd'}{n^6} .
$$

This implies that
\[   H_{\p A}(ik, w) = \frac 14 \,  H_A(1 + ik, w) = 
\frac 14 \, h_{R}(1+ik,w) 
      + O(dd'n^{-6}) . \]
We now assume that $k\leq m/2$. We can extend the function $h(z) = h_{R}(z,w)$ to
$\{ x+iy: -n < x < n, -m < y < m \}$ by Schwarz
reflection. (If $k > m/2$, we instead extend the function to $\{ x+iy: -n< x < n, 0 < y < 2m \}$.) Note that on $\{z:  |z - ik| \leq \min\{m,n\}/4 \},$      
$$|h(z) | \leq c \, dd'/n^3.$$
 Since $\min\{m,n\} \asymp n$,
  estimates for derivatives of harmonic functions (see, for instance, Section 2.3 in \cite{Lawler_cip}) then imply that
 \begin{equation}\label{harmderiv} 
   |\p_{x}^kh(z)| \leq c \, dd'/ n^{3+k} 
   \end{equation}
Using the definition of the boundary Poisson kernel (see Section 2.3) and \eqref{harmderiv} in a Taylor expansion of $h$ at $ik$, we see that 
\[       h_R(ik + 1,w)  =   h_{\p R}(ik,w)  +  O(dd'/n^5).\]  
\end{proof} 
Let us abuse notation slightly and write
\[    G_{U_n^-}(0,\zeta) = \frac{1}{4}
 \sum_{|e| = 1} G_{U_n^-}(e,\zeta). \]

\begin{lemma}\label{01-22-14}    For every $n$, there exists $c_n$ such
that  for all $|\zeta| \geq n/2$, 
\[   G_{U_n^-}(0,\zeta)  =
         c_n \, g_{V_n^-}(-1,\zeta)  \, \left[ 1 +  
      O(n^{-1/20})\right] . \]
 Moreover, the constant $c_n$ is uniformly bounded away
 from $0$ and $\infty$.
\end{lemma}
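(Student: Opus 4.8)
The plan is to peel off a neighbourhood of the slit tip and reduce to a comparison of bulk quantities. Viewed as functions of $\zeta$, both sides are harmonic (discretely, resp.\ in the continuum) on $\{|\zeta|>n/2\}\cap U_n^-$ and vanish on $\partial U_n^-$ there, so it is enough to control them on a box of intermediate size. Set $m=\lfloor n/3\rfloor$, so that $\zeta\notin\overline{U_m}$ whenever $|\zeta|\ge n/2$. Applying the strong Markov property at the first exit time $\rho$ of $U_m^-$, and using that the contribution of paths that exit through a point of the slit $\{0,\dots,m\}$ vanishes (those points lie on $\partial U_n^-$), one gets
\[
G_{U_n^-}(0,\zeta)=\sum_{w\in\partial U_m}\mu_n(w)\,G_{U_n^-}(w,\zeta),\qquad \mu_n(w):=\tfrac14\sum_{|e|=1}\mathbf{P}^{e}(S_\rho=w),
\]
and the identical decomposition
\[
g_{V_n^-}(-1,\zeta)=\int_{\partial V_m}\tilde\mu_n(w)\,g_{V_n^-}(w,\zeta)\,|dw|,
\]
with $\tilde\mu_n$ the harmonic measure of $V_m^-$ from $-1$, restricted to $\partial V_m$. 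It then suffices to compare the two ingredients on the right: the ``escape measures from the tip'' $\mu_n,\tilde\mu_n$ on the scale-$m$ box, and the Green's functions with pole at $\zeta$ restricted to that box.

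For the Green's functions: if $w\in\partial U_m$ and $\zeta$ (with $|\zeta|\ge n/2$) both lie at distance $\ge n^{1-u}$ from the slit, then $w,\zeta$ sit in the bulk of the simply connected domain $U_n^-$, and the estimates of \cite{KL} combined with the KMT coupling of Theorem~\ref{kmt} give $G_{U_n^-}(w,\zeta)=g_{V_n^-}(w,\zeta)\,[1+O(n^{-u})]$, while derivative estimates for positive harmonic functions make $g_{V_n^-}(w,\cdot)$ essentially constant over unit neighbourhoods of $w$. Points $w\in\partial U_m$ within distance $n^{1-u}$ of the slit, and values of $\zeta$ within that distance of the slit, contribute negligibly: a walk from a neighbour of $0$ (or a Brownian motion from $-1$) that reaches such a $w$, or travels near such a $\zeta$, without being killed on the slit pays an extra factor $O(n^{-u'})$ by the Beurling estimate, and the analogous bound holds for $\tilde\mu_n$ and $g_{V_n^-}$. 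This reduces the problem to comparing $\mu_n$ with $\tilde\mu_n$ as measures on the portion of $\partial U_m$ at distance $\ge n^{1-u}$ from the slit.

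The remaining step --- matching the two escape measures seen from the slit tip --- is the heart of the matter. I would stop the walk, resp.\ the Brownian motion, at the first time it leaves the small box $U_{R_0}$ with $R_0=\lfloor n^{\delta}\rfloor$ for a small $\delta>0$. The claim is that the exit position of the random walk in the half-infinite lattice slit $\mathbb{Z}^2\setminus\mathbb{Z}_{\ge 0}$ started next to the origin, rescaled by $R_0$, lies within $O(R_0^{-u})$ (in total variation) of the corresponding Brownian exit law on $\partial V_{R_0}$; this is a local statement about the lattice slit that one obtains by standard strong-approximation arguments (using the reflection symmetry of the walk across the slit to control the near-tip return behaviour, and Theorem~\ref{kmt} once the walk is at distance $\gg 1$ from the tip), the purely microscopic $O(1)$-scale behaviour contributing only a bounded multiplicative factor. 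From scale $R_0$ out to scale $m$ the process is in the bulk apart from excursions near the slit (again controlled by Beurling), so $\mu_n$ and $\tilde\mu_n$ may be compared there by a further appeal to \cite{KL} and Theorem~\ref{kmt}. The constant $c_n$ then emerges as the ratio of the total escape masses $\sum_{w}\mu_n(w)$ and $\int_{\partial V_m}\tilde\mu_n(w)\,|dw|$; each is of order $n^{-1/2}$ --- the Beurling estimate for the upper bound and an explicit test path for the lower bound --- so $c_n$ is bounded above and below. The main obstacle is precisely the tip comparison just described; the exponent $1/20$ is inherited from the convergence rate in the simply-connected-domain estimates of \cite{KL}, and no attempt is made to optimise it.
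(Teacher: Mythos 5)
Your overall architecture is reasonable, but the step you yourself identify as ``the heart of the matter'' --- matching the discrete and continuum escape measures from the slit tip at scale $R_0=n^{\delta}$ --- is not something that follows from ``standard strong-approximation arguments,'' and as written it is a genuine gap. The KMT coupling controls the two paths only to within $O(\log n)$, which is useless for deciding whether the walk is killed on the slit within an $O(1)$ neighbourhood of the tip; reflection symmetry kills the contribution of paths that \emph{hit} $\{0,\dots,n\}$ but says nothing about how the surviving paths distribute themselves on $\partial U_{R_0}$. What you actually need is that the microscopic tip behaviour changes only the \emph{total mass} (absorbed into $c_n$) and not the \emph{shape} of the exit law up to a $1+O(n^{-u})$ factor; that is a quantitative boundary-Harnack/coupling statement for the slit domain which is essentially equivalent to the lemma itself. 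Indeed it is, up to normalization, Theorem~\ref{slitsquarehm} at scale $R_0$, and in the paper the proof of that theorem \emph{uses} Lemma~\ref{01-22-14} (via a last-exit decomposition through a corner rectangle), so taking it as an input here is circular unless you supply an independent proof.

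The paper's proof avoids the tip entirely. It fixes a reference point $z_0=-\lfloor n/8\rfloor$ and applies equation (40) of \cite{KL} in the simply connected domain $U_n^-$ to get
\[
G_{U_n^-}(0,\zeta)=G_{U_n^-}(0,z_0)\,\frac{1-|F(\zeta/n)|^2}{|F(\zeta/n)-1|^2}\,\bigl[1+O(n^{-1/20})\bigr],
\]
where $F$ uniformizes the slit square; all of the unknown microscopic behaviour near the tip is packed into the single constant $G_{U_n^-}(0,z_0)$. On the continuum side an explicit Schwarz--Christoffel computation gives $F_n'(-1)=c_0n^{-1/2}[1+O(n^{-1/2})]$ and hence $g_{V_n^-}(-1,\zeta)=2\pi c_0 n^{-1/2}\,h_{\DD}(F(\zeta/n),1)[1+O(n^{-1/2})]$, i.e.\ the \emph{same} conformally defined profile in $\zeta$ times an explicit constant. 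The lemma then follows once one knows $G_{U_n^-}(0,z_0)\asymp n^{-1/2}$, which is a two-sided gambler's-ruin/Beurling bound. If you want to salvage your route, you should either prove your tip claim by this ratio argument (at which point your intermediate decomposition becomes unnecessary) or set the decomposition up as a genuine renormalization over scales with a base case you can actually establish.
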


\begin{proof}
 Let $z_0 = - \lfloor n/8 \rfloor$ and $F_n: V_n^- \rightarrow \Disk$ be  the  conformal transformation
 with $F_n(z_0)=0, F_n(0) = 1$.  Note that for all $z\in V_n^-, F_n(z) = F(z/n)$
 where $F = F_1$. 
 It is easy to see that $|F_n(\zeta) -1| = |F(\zeta/n)
  - 1|$  is uniformly bounded
 away from $0$ for $|\zeta| \geq n/2$.  
 
Let $H_n$ be the restriction to $V_n^-$ of the Schwarz-Christoffel transformation from $V_n$ to $n\Disk$, that sends the origin to the origin and $(n,0)$ to $(n,0)$. Then the image of $H_n$ is $n\Disk\setminus [0,n]$. We can see, e.g., from the explicit form of $H_n$ that $H_n(z_0)=-cn(1+\bigo{1/n})$ for some $0<c<1$. Moreover, $H_n(-1)=-H_n'(0)(1+\bigo{1/n})$ and $H'_n(-1)=c'(1+\bigo{1/n})$ for some $c'>0$. 

 We can then write $F_n = G_n\circ H_n$, where $G_n(z)=(1-z_a)(\sqrt{z/n}+\sqrt{n/z})+(1+z_a)i/(1-z_a)(\sqrt{z/n}+\sqrt{n/z})-(1+z_a)i$ ($z_a$ is some real in $[0,1]$ that depends on $H_n(z_0)$ and can be computed explicitly) maps $n\Disk\setminus [0,n]$ to $\Disk$, $H_n(z_0)$ to 0 and $0$ to $1$. Then $G_n'(H_n(-1))= c'' \, n^{-1/2} \, [1 + O(n^{-1/2})]$ for some  $c''$, so that the chain rule implies that $F_n'(-1)=c_0 \, n^{-1/2} \, [1 + O(n^{-1/2})]$ for some constant $c_0$.
 
 Using the explicit form of the Green's function and Poisson kernel in the disk, we
 can see that 
\begin{eqnarray*}
g_{V^-}(-1, \zeta) & =  & g_\Disk(F_n(-1), F_n(\zeta)) \\&=&
       g_\Disk(1 - c_0 \, n^{-1/2}(1 + O(n^{-1/2})) , F(\zeta/n))
         \,  \\
  & = & 2\pi \,c_0 \, n^{-1/2} \,  h_{\Disk}(F(\zeta/n),1) \, 
   [1 + O(n^{-1/2})] \\ &  = & \frac{2 \pi \, c_0 \, [1 - |F(\zeta/n)|^2]}
      {n^{1/2} \,|F(\zeta/n) - 1|^2 } \,   [1 + O(n^{-1/2})] . 
      \end{eqnarray*}
  By equation (40) of  \cite{KL},
we can see that 
 \[      G_{U^-}(0,\zeta) = G_{U^-}(0,z')   
  \frac{  1 -| F(\zeta/n)|^2}
      { |F(\zeta/n) - 1|^2 } \,   [1 + O(n^{-1/20})].\]
      Here we are using the
 uniform  bound   on $|F(\zeta/n)
  - 1|$.   
All one needs now is that $G_{U^-}(0,z_0) \asymp n^{-1/2}$, which follows from Proposition 1.5.9 and (2.40) in \cite{greenbook}.
 \end{proof}

\begin{proof}[Theorem \ref{slitsquarehm}.]
 Let us first consider the case $b = n + ik'$ with $0 < k' <
 n$, 
Let $m = \lfloor 3n/4 \rfloor $ and let $R= R_n$ be the
 rectangle in the top right corner of $U$:
\[   R = \{x +iy: m < x < n, 0 < y < n \}.\]
As an abuse of notation we will also write $R$ for $R \cap
\Z^2$. 
A last-exit decomposition shows that
\[     H_{\p U^-} (0,b) = \sum_{j=1}^{n-1}
    G_{U_n^-}(0,m+ji) \, H_{\p R}(m+ji,b) . \]
  A similar decomposition shows that
 \[       h_{V^-}(-1,b) = \frac{1}{2 \pi} \int_0^n
      g_{V^-}(-1,m+i\zeta) \, h_{\p R}(m+i\zeta, b) \, |d\zeta|. \]
 This latter equality is perhaps better seen by writing
 \[    h_{V^-}(-1,b) = \lim_{\epsilon \downarrow 0}
          \frac 1 {2\pi \epsilon}  g_{V^-}(-1, b - \epsilon)
           =   \lim_{\epsilon \downarrow 0}
          \frac 1{2\pi \epsilon}  g_{V^-}(  b - \epsilon, -1), \]
          and using the strong Markov property. 
Lemma \ref{jan10.lemma1}
shows that
\begin{equation} \label{jan10.1}
 H_{\p R}(m+ji,b) =  \frac{h_{\p R}(m+ji, b)}
             4 + O(d/n^4) , 
             \end{equation}
         where $d = \min\{k', n-k'\}$.  
 For $A\in\Z^2$, we let $\tau_A = \inf\{n\geq 1: S(n)\in A\}$ and write $\tau_x$ when $x$ is just a point. Then, using again Proposition 1.5.9 and (2.40) in \cite{greenbook}, for $1\leq j\leq n-1$,
 \begin{eqnarray*}
 G_{U^-}(0,m+ij)& = & \Prob^0(\tau_{m+ij}<\tau_{\p U^-})G_{U^-}(m+ij,m+ij)\\
 & \leq & cn^{-1/2}G_{U^-}(m+1+ij,m+ij) \leq cn^{-1/2},
 \end{eqnarray*}
so
   \[   \sum_{j = 1}^{n-1} G_{U^-} (0,m+ji) \leq cn^{1/2}.\]
  Therefore, using \eqref{jan10.1},
  \begin{eqnarray*}
      H_{\p U^-}
     (0,b) & = &   \sum_{j=1}^{n-1}
    G_{U_n^-}(0,m+ji) \,\left[
    H_{\p R}(m+ji,b) -  \frac{1}{4} h_{\p R}(m+ji,b) \right]
      \\
  &   & \hspace{1in} + \frac 14 \sum_{j=1}^{n-1}
    G_{U_n^-}(0,m+ji) \, h_{\p R}(m+ji,b) 
  \\
  & =  &  O(d/n^{7/2}) +  \frac 14 \sum_{j=1}^{n-1}
    G_{U_n^-}(0,m+ji) \, h_{\p R}(m+ji,b) 
     . 
     \end{eqnarray*}
  By Lemma \ref{01-22-14}, we can write
  \[  H_{\p U^-}
     (0,b) =   O(d/n^{7/2}) + [1 + O(n^{-1/20})]
      \sum_{j=1}^{n-1}
   c_n \,  g_{V_n^-}(-1,m+ji) \, h_{\p R}(m+ji,b) 
, \]
where this $c_n$ is $1/4$ times the value
in that lemma. 
  The sum above is greater than a constant times
  $d/n^{3/2}$. Indeed, the probability that a Brownian motion from $-1$ reaches a point in (say) the middle half of the left side of $R$ before exiting $V_n$ is at least $cn^{-1/2}$. Moreover, the function $h_{\partial{R}}(\cdot, b)$ at such points is at least $c d/n$. (Recall that $d = \min\{k', n-k'\}$ and $b=n+ik'$.) Hence we can write this as
   \[  H_{\p U^-}
     (0,b) = [1 + O(n^{-1/20})]
      \sum_{j=1}^{n-1}
   c_n \,  g_{V_n^-}(-1,m+ji) \, h_{\p R}(m+ji,b) 
       . \]
   Routine estimates allow us to approximate the
   sum by an integral,
\begin{eqnarray*}
 H_{\p U^-}
     (0,b) & = &  [1 + O(n^{-1/20})]
       \int_0^{n} 
    c_n \, g_{V_n^-}(-1,\zeta) \, h_{\p R}(\zeta,b)  \, |d\zeta|
    \\
    &    =  &  2\pi \, c_n \,   h_{V^-}(-1,b) \, [1 + O(n^{-1/20})].
\end{eqnarray*}

We assumed that $b = n +i k'$ with $k ' > 0$.  There are four
other cases.  For example, if $b = k + ni$ with $k > 0$ we
replace the rectangle $R$ with the rectangle
\[     \{x+iy:  - n < x < n, m < y < n \} . \]
We do the other three cases similarly.  In all cases
we choose $z_0 = - \lfloor n/8 \rfloor$.   The same
argument gives us
\[  H_{\p U^-}
     (0,b) =    2\pi \, c_n \,   h_{V^-}(-1,b) \, [1 + O(n^{-1/20})], \]
     with the same value of $c_n$. From this we conclude that
     \[
     \frac{H_{\p U^-}(0,b)}{\sum_{y \in \p U} H_{\p U^-}( 0, y)} = \frac{h_{V^-}(-1,b)}{\int_{\p V}h_{V^-}(-1,y) |dy|} \left[1+O(n^{-1/20}) \right].
     \]       
\end{proof}

\subsection{From the slit square to $\partial A$: proof of Proposition  \ref{approxaftersquare}}\label{approxaftersquaresection}

To prove Proposition \ref{approxaftersquare}, we need a version of Theorem \ref{kmt} for the $h$-processes. We will however use slightly different stopping times. We will write $f$ for $f_A$ and for $u>0$ consider Brownian and random walk paths $B$ and $S$ with measure either $\Prob^z$ or $\Prob^{z,a}$ (for the unconditioned and conditioned paths, respectively). We let
$$\tau_u 
  = \inf\{t\geq 0: d(f(S_{2t}),\partial \Disk)\leq n^{-u}\} , \;
   T_u = \inf\{t\geq 0:d(f(B_t),\partial \Disk)\leq n^{-u}\}, $$
  \[  \sigma_u = \tau_{u} \wedge T_{u}.\]
  Let $\tau, T$ be the times that the paths reach $\p \Disk$ (of course, under the measure $\Prob^{z,a}$, this is the time at which they reach $a$).

Intuitively, knowing that $S$ and $B$ will exit at the same point $a$ should only make it easier under the measure $\Prob^{z,a}$ than under $\Prob^z$ to find a coupling that ensures that $B$ and $S$ are close with high probability. Indeed, this is the case.
We will prove the following result which does not give the optimal bounds.
\begin{theorem}\label{kmthproc}
There exist $u, u' >0, c < \infty$   such that if $n \leq \dist(z, \p A)
 \leq n+1$,   $\ze\in\partial A$, and  $z \in A$ with $  |z| \leq 3n/4 $  then one can construct a probability space containing 
two processes $B$ and $S$ such that the probability of the event that
\[
\sup_{0\leq t \leq \sigma_u }|S_{2t}-B_t|  \geq c\log n, 
\]
and
\[
 \diam\left(f \circ S[\sigma_u, \tau]\right) +  \diam\left(
f\circ B[\sigma_u,T] \right) \geq c \, n^{-u'}
\]
is bounded above by $cn^{-u'}$.
Here $B$ and $S$ have the distribution of a Brownian, respectively random walk $h$-process started at $z$ and conditioned to leave $D_A$ at $a$.
\end{theorem}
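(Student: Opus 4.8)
The plan is to bootstrap the unconditioned KMT coupling of Theorem \ref{kmt} into a coupling of the two $h$-processes by exploiting the Radon--Nikodym description \eqref{dischrel} of the conditioned law in terms of the unconditioned one. First I would run the standard two-dimensional KMT coupling of $S$ and $B$ from $z$ up to a scale comparable to $n$ (so that with probability $1-O(n^{-\lambda})$ the sup-norm discrepancy is $O(\log n)$), and observe that the conditioned measures $\Prob^{z,a}$ are obtained by weighting a path $\omega$ by $H_A(\omega_k,a)/H_A(z,a)$ (resp.\ the Brownian Poisson-kernel ratio). The key point is that as long as the path stays at distance $\geq n^{-u}$ from $\p\Disk$ under $f$, the weight factor $H_A(\cdot,a)/H_A(0,a)$ is, by Theorem \ref{MGthm} (specifically \eqref{MGthm.eq}) and \eqref{jan11.2}, comparable to the continuum factor $h_A(\cdot,a)/h_A(0,a)$ up to a multiplicative $1+O(n^{-u})$ error; moreover both weights are uniformly bounded above and below on the relevant region. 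Hence the two conditioned laws restricted to paths stopped at $\sigma_u$ are mutually absolutely continuous with respect to the two unconditioned laws with densities that agree up to $1+O(n^{-u})$, and the unconditioned KMT coupling transfers to a coupling of the conditioned processes up to time $\sigma_u$ with the event $\{\sup_{0\le t\le\sigma_u}|S_{2t}-B_t|\ge c\log n\}$ having probability $O(n^{-u'})$. Quantitatively this uses that weighting by a density bounded by a constant inflates probabilities by at most that constant, and the $O(n^{-u})$ multiplicative agreement of the densities controls the total-variation cost of swapping the two conditioned laws for a common reference.

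Second, I would control the tail of each path between $\sigma_u$ and the exit time $\tau$ (resp.\ $T$), i.e.\ the second displayed event about $\diam(f\circ S[\sigma_u,\tau])$ and $\diam(f\circ B[\sigma_u,T])$. After applying $f=f_A$ the domain becomes $\Disk$, the conditioned paths are Poisson-kernel $h$-processes aimed at $a\in\p\Disk$, and at time $\sigma_u$ the image point $f(S_{2\sigma_u})$ is within distance $n^{-u}$ of $\p\Disk$. I would use a Beurling-type / harmonic-measure estimate in $\Disk$ for the $h$-process: once an $h$-process aimed at $a$ is within $\delta$ of the boundary, the probability that it travels macroscopic distance $\ge n^{-u'}$ (for suitable $u' < u$) before being absorbed is $O(n^{-u'})$; this is a standard computation with the explicit disk Poisson kernel and the Girsanov/$h$-transform description, and the random walk version follows from the discrete analogue (e.g.\ gambler's ruin plus the discrete Beurling estimate as used already in Section \ref{sect:loops}). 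Combining the two steps with a union bound gives the claimed $O(n^{-u'})$.

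The main obstacle I anticipate is the first step: the unconditioned KMT coupling is a coupling of $S$ and $B$, but the conditioning tilts the two paths by \emph{different} densities ($H_A$-ratios vs.\ $h_A$-ratios), so one cannot simply push forward the unconditioned coupling — one must argue that these two densities are close (which is exactly where Theorem \ref{MGthm} and the difference estimate for discrete harmonic functions enter) and convert that closeness into a coupling of the \emph{conditioned} laws, paying a total-variation price that must be shown to be $O(n^{-u'})$. Care is needed because the stopping times $\tau_u, T_u$ are themselves defined through $f$ and the paths are stopped at the minimum $\sigma_u$, so one must check the density comparison holds uniformly up to $\sigma_u$ and that the stopped conditioned laws really are mutually absolutely continuous with controlled density. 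A secondary technical point is that $f=f_A$ is only a quasi-isometry away from $\p\Disk$, so transferring Euclidean-scale statements between $A$ and $\Disk$ near the boundary requires the distortion estimates already invoked in Lemma \ref{6789} and Lemma \ref{lem:continuouslambda}; these are routine but must be threaded through consistently.
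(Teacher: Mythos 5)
Your first step follows the paper's route: run the unconditioned KMT coupling and then tilt by the $h$-transform densities. One correction there: the weights $h_A(\cdot,a)/h_A(z,a)$ and $H_A(\cdot,a)/H_A(z,a)$ evaluated at the stopped points are \emph{not} uniformly bounded --- at distance $n^{-u}$ from $\p\DD$ in the $f$-image the Poisson kernel can be of size $n^{u}$ near $a$ --- so the correct statement is that they lie in $[c^{-1}n^{-u},cn^{u}]$ (the paper gets this from the explicit disk Poisson kernel, Theorem~4.1 of \cite{BJK}, (40) of \cite{KL}, and Harnack). This still suffices, because the KMT failure probability can be taken to be $n^{-\lambda}$ with $\lambda$ large compared to $u$, so the tilt costs only a factor $n^{u}$.

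The genuine gap is in your second step. The claim that ``once an $h$-process aimed at $a$ is within $\delta$ of the boundary, the probability that it travels distance $\geq n^{-u'}$ before absorption is $O(n^{-u'})$'' is false as a pointwise statement: if the process first enters the $\delta$-neighbourhood of $\p \DD$ at a point $w$ with $|w-a|$ of order one, it must still travel to $a$, so the diameter of the remaining path is of order one with conditional probability one. The real content of the tail estimate is therefore not a Beurling bound but the fact that the conditioned process is unlikely to approach the boundary \emph{far from} $a$ in the first place. This is where the paper does its main work: it shows $\Prob^{z,a}\left(|\Arg(f(S(\tau_u)))|\geq R_1 r\right)=O(R_1^{-1})$ using the decay $h((1-r)e^{iR_1r},1)=O(R_1^{-1})$ of the disk Poisson kernel together with \eqref{MGthm.eq} to transfer this to the discrete walk, and only then, on the complementary event, bounds the probability of a subsequent excursion of diameter $R_2 r$ via the ratio of Poisson kernels and Proposition~3.1 of \cite{BJK}, with the choices $R_1=r^{-1/4}$, $R_2=r^{-3/4}$, $r=n^{-u}$. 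The gambler's ruin and discrete Beurling estimates you invoke control the unconditioned walk and do not by themselves control the conditioned walk's angular location when it nears the boundary. A further point you should address, which the paper handles by introducing separate stopping times $\tau_u,T_u\leq\sigma_u$ for each marginal, is that the coupled pair $(S,B)$ is not strong Markov at $\sigma_u$, so one cannot directly apply the strong Markov property there.
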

\begin{proof} We use the KMT coupling of Theorem \ref{kmt} to put the
unconditioned paths $B$ and $S$ on a probability space in such a way that if $\mathcal{K} = \{\sup_{0\leq t \leq \sigma_u }|S_{2t}-B_t|  \leq c\log n\}$, we have
$$\Prob^z(\mathcal{K}^c)\leq cn^{-2}.$$ 
To obtain the corresponding result for the $h$-processes, we note that the fact that there is a point $v$ with $|v|=1-n^{-u}$ and $d(v,f(B(\sigma_u))\leq c\log n$, together with the distortion theorem, implies that there exists a constant $c$ such that $d(f(B(\sigma_u)),\p\Disk)\in [c^{-1}n^{-u},cn^{-u}]$, which, using the explicit form of the Poisson kernel in the unit disk in \eqref{poissondisk}, implies that $h(B(\sigma_u),a)\in [c^{-1}n^{-u},cn^{u}]$. Similarly, using Theorem~4.1 in \cite{BJK} and (40) in \cite{KL}, we see that, with a possibly different constant $c, H(S(\sigma_u),a)\in [c^{-1}n^{-u},cn^{u}]$. Moreover, by Harnack estimates $h(z,a) \asymp 1$ and $H(z,a)/H(0,a)  \asymp 1$ for $z$ with $|z| \le 3n/4$. The fact that the measure for the Brownian $h$-process is obtained by weighing the Brownian paths by a factor of $h(B(\sigma_u),a)/h(z,a)$ and that the measure for the random walk $h$-process is obtained by weighing the random walk paths by a factor of $H(S(\sigma_u),a)/H(z,a)$ now implies that
$$\Prob^{z,a}(\mathcal{K}^c)\leq cn^{-2+u}.$$ 

It remains to show that there is $u' > 0$ such that
\[\Prob^{z,a}(\diam\left(f \circ S[\sigma_u, \tau]\right)  + \diam\left(
 f \circ B[\sigma_u,T]\right) \geq c \, n^{-u'})\leq cn^{-u'}.\]
In order to split this into separate estimates for $S$ and $B$, we have to deal with the technical issue that the joint process $(S,B)$ doesn't satisfy the strong Markov property in the coupling. To get around this, one can use a standard tool (see, for instance, the proof of Theorem 3.1 in \cite{BJK}) which consists in introducing stopping times $\tau_u$ for $S$ and $T_u$ for $B$ such that 
\begin{equation*}\label{02-23b}
\max\{\tau_u, T_u\}\leq \sigma_u
\end{equation*}
and
\begin{equation}\label{02-23}
f(S(\tau_u))\in A_{c,u},
\end{equation} 
where $A_{c,u} = \{z\in\Disk: |z|\in[1-c^{-1}n^{-u},1-cn^{-u}]$ and, similarly, $|f(B(T_u))|\in A_{c,u}$, and applying the strong Markov property at those times.
Let us just consider the $h$-process $S$, as conformal invariance makes the estimate for the $h$-process $B$ considerably simpler. 
 Write $r=n^{-u}$. For simplicity we assume without loss of generality that $f_A(a) = 1$. 
 
 Using the expression for the Poisson kernel in the disk in \eqref{poissondisk} we see that for $R_1\geq 1, h((1-r)e^{iR_1r},1)=\bigo{R_1^{-1}}$. 
We can use this and \eqref{MGthm.eq} to
see that if $w$ is a lattice point within one unit of $f^{-1}((1-c'r)e^{iR_1r})$ with $c'\in[c^{-1},c]$ and $c$ as in \eqref{02-23}, and if $|z|\leq 3n/4$, 
$$\frac{H(w,a)}{H(z,a)}=\bigo{R_1^{-1}}.$$ 
Therefore,
\begin{align*}\Prob(F) & :=\Prob^{z,a}(|\Arg(f(S(\tau_u)))|\geq R_1) \\ & = \bigo{R_1^{-1}}\Prob^{z}(|\Arg(f(S(\tau_u)))|\geq R_1) \\ &=\bigo{R_1^{-1}}.
\end{align*}
Now let $R_1=r^{-1/4}$ and $R_2 = r^{-3/4}$ and note that $r^{-1}\geq R_2\geq R_1\geq 1$.
\begin{align*}
\Prob^{z,a}(\diam\left(f \circ S[\tau_u, \tau]\right)\geq R_2r) & \leq  \sup_{v,w}\frac{H(v,a)}{H(w,a)} \cdot \Prob^{z}(F^c;\diam\left(f \circ S[\tau_u, \tau]\right)\geq R_2r) \\ 
& \quad \quad \quad +\bigo{R_1^{-1}}\\
& \leq  C\frac{1}{R_2}\frac{R_1}{R_2r}+\bigo{R_1^{-1}} = \bigo{R_1^{-1}},
\end{align*}
where the sup is over $w\in f^{-1}(A_{c,u}\cap D(1,R_1 r))$  and $v\not\in f^{-1}(D(1,R_2r))$ and we used in the first inequality (40) and (41) of \cite{KL} and in the second inequality Proposition 3.1 in \cite{BJK}, letting a point in $A_{c,u}$ play the role of the origin in that Proposition. Noting that we can let $u'=u^{1/4}$ concludes the more difficult part of the proof.
\end{proof}

\begin{proof} [Proposition \ref{approxaftersquare}.]    We choose constants $c,u,u'$ so that the conclusion of Theorem~\ref{kmthproc} holds: We couple the $h$-processes $B^{\ze}$ and $S^{\ze}$, started at a point $z\in\p U_A$ using the coupling of that theorem and let 
$\mathcal{K}$ be the event that $$\sup_{0\leq t \leq \sigma_u }|S^{\ze}_{2t}-B^{\ze}_t|  \leq c\log n,$$
and
 $$\diam\left(\psi \circ S^a[\sigma_u, \tau]\right)  + \diam\left(
 \psi \circ B^a[\sigma_u,T]\right) \leq c \, n^{-u'},$$
 so that 
 \begin{equation}\label{hkmt}
 P(\mathcal{K}^c)\leq cn^{-u'}.
 \end{equation}
  We define
\[     \xi_b = \inf\{t: |B_t| \leq n^{1/2}+bc\log n\}, \;\;\;\;
        \zeta_b  = \inf\{t: |S_{2t}| \leq n^{1/2}+bc\log n\},\]
        where $c$ is the same constant as in $\mathcal{K}$ and write $\xi$ for $\xi_0$ and $\zeta$ for $\zeta_0$.
        Let $Q^B = Q[B^a[0,T]), Q^S = Q[S^a[0,\tau]].$
Note that $Q^B = Q^S\, I_a$  provided that $\xi > T, \zeta > \tau$,  and $\mathcal{K}$
 holds.
Therefore, if $z \in \p U$, the fact that $|Q^B - Q^S\, I_a|\leq 2$ implies that
\[   \left|\E^{z,a}\left[Q^B - Q^S \, I_a\right]\right|
   \leq \left|\E^{z,a}\left[ Q^B; \xi <  T\right] \right| 
   + \left|\E^{z,a}\left[Q^S \,I_a; \zeta < \tau\right] \right|\hspace{1in}\]
   \[  \hspace{1in}      
        + 2[\Prob^{z,a}(\xi < T; \tau < \zeta; \mathcal{K}) + \Prob^{z,a}(\zeta < \tau; T < \xi; \mathcal{K})
         + \Prob(\mathcal{K}^c)].\]
         
          Since we know from \eqref{02-08} that $|\lambda_A(z,a)| \leq c \, n^{-1/4}$ for
  $|z| \leq n^{-1/2}$, we can use the strong Markov property
  to see that
  \begin{equation}\label{part1}
  \left |\E^{z,a}\left[ Q^B; \xi <  T\right] \right|
   \leq \left |\E^{z,a}\left[ Q^B \mid \xi <  T\right] \right| 
    \leq cn^{-1/4}, 
    \end{equation}
    and similarly,
      \begin{equation}\label{part2}
    \left|\E^{z,a}\left[Q^S \,I_a; \zeta < \tau\right] \right|
    \leq cn^{-1/4}.
    \end{equation}
    
 If we let $\sigma=\inf\{t\geq \zeta_1: |S_{2t}|\geq 2n^{1/2}\}$, we see that 
 \begin{equation}\label{part3}
 \Prob^{z,a}(\xi<T; \zeta>\tau; \mathcal{K}) \leq \Prob^{z,a}(\zeta_1<\tau <\zeta) \leq c\log n/n^{1/2},
 \end{equation}
by the strong Markov property and the planar gambler's ruin estimate
  (the gambler's
  ruin estimate is for simple random walk, but this close to the origin
  the $h$-process is mutually absolutely continuous with respect to
  the simple walk.)  
 
  We can show in the same way that  
  \begin{equation}\label{part4}
  \Prob^{z,a}(\zeta < \tau; T < \xi; \mathcal{K})\leq c\log n/n^{1/2}
 \end{equation}
 Combining \eqref{hkmt}-\eqref{part4} completes the proof. \end{proof}

\bibliographystyle{plain}       % APS-like style for physics
\bibliography{LERWEdge}   % name your BibTeX data base

\end{document}